\documentclass[11pt]{article}
\usepackage{epigamath}

\usepackage[notext]{kpfonts}
\usepackage{baskervald}

\setpapertype{A4}


\usepackage[english]{babel}

\usepackage[dvips]{graphicx}     


\title{\vspace{-0.5cm}Crepant Resolutions and Open Strings II}
\titlemark{Crepant Resolutions and Open Strings II}
\author{\vspace{0cm} Andrea Brini and Renzo Cavalieri}
\authoraddresses{
\authordata{Andrea Brini}{\firstname{Andrea} \lastname{Brini}\\
\institution{IMAG, Univ. Montpellier, CNRS, Montpellier, France
\newline  Department of Mathematics, Imperial College, 180 Queen's Gate, London SW7
  2AZ, United Kingdom}\\
\email{a.brini@imperial.ac.uk}}\\
\authordata{Renzo Cavalieri}{\firstname{Renzo}
\lastname{Cavalieri}\\
\institution{Department of Mathematics, Colorado State University, 101 Weber Building, Fort
Collins, CO 80523-1874, USA}\\
\email{renzo@math.colostate.edu}}
}
\authormark{A. Brini and R. Cavalieri}
\date{\vspace{-5ex}} 
\journal{\'Epijournal de G\'eom\'etrie Alg\'ebrique} 
\acceptation{Received by the Editors on August 25, 2017, and in final form
on February 28, 2018. \\ Accepted on May 12, 2018.}



 \usepackage[all]{xy}

\allowdisplaybreaks


\newtheorem{theorem}{Theorem}[section]

\newtheorem{example}[theorem]{Example}

\newcommand{\X}{\mathcal{X}}
\newcommand{\Z}{\mathbb{Z}}
\newcommand{\C}{\mathbb{C}}

 \DeclareMathOperator{\diag}{diag}
\newcommand{\de}{{\partial}}

\newcommand{\rd}{\mathrm{d}}
\newcommand{\ri}{\mathrm{i}}
\newcommand{\re}{\mathrm{e}}


\newcommand{\U}{\mathbb{U}_{\rho}^{\X,Y}}

\newcommand{\bbN}{\mathbb{N}}

\newcommand{\bbZ}{\mathbb{Z}}
\newcommand{\bbR}{\mathbb{R}}
\newcommand{\bbC}{\mathbb{C}}
\newcommand{\bbO}{\mathbb{O}}
\newcommand{\bbP}{\mathbb{P}}
\newcommand{\bbF}{\mathbb{F}}

\newcommand{\bbV}{\mathbb{V}}

\def\bary{\begin{array}}
\def\eary{\end{array}}
\def\ben{\begin{enumerate}}
\def\een{\end{enumerate}}
\def\bit{\begin{itemize}}
\def\eit{\end{itemize}}
\def\nn{\nonumber}

\newcommand{\cY}{\mathcal{Y}}

\newcommand{\cO}{\mathcal{O}}

\newcommand{\cC}{\mathcal{C}}
\newcommand{\DD}{\mathcal{D}}
\newcommand{\LL}{\mathcal{L}}
\newcommand{\cS}{\mathcal{S}}

\newcommand{\cN}{\mathcal{N}}
\newcommand{\cW}{\mathcal{W}}

\newcommand{\cA}{\mathcal{A}}
\newcommand{\HH}{\mathcal{H}}
\newcommand{\cB}{\mathcal{B}}
\newcommand{\cF}{\mathcal{F}}
\newcommand{\cI}{\mathcal{I}}

\newcommand{\RR}{\mathcal{R}}
\newcommand{\cX}{\mathcal{X}}

\newcommand{\cM}{\mathcal M}

\newcommand{\bfz}{\mathbf{z}}

\newcommand{\eq}[1]{\begin{equation}#1\end{equation}}
\newcommand{\ea}[1]{\begin{align}#1\end{align}}
\def\bary{\begin{array}}
\def\eary{\end{array}}
\def\ben{\begin{enumerate}}
\def\een{\end{enumerate}}
\def\bit{\begin{itemize}}
\def\eit{\end{itemize}}
\def\nn{\nonumber}
\def\de {\partial}


\def\a{\alpha}
\def\b{\beta}



\newtheorem{thm}{Theorem}[section]
\newtheorem{lem}[thm]{Lemma}

\newtheorem{prop}[thm]{Proposition}
\newtheorem{conj}[thm]{Conjecture}

\newtheorem{proposal}{Proposal}
\newtheorem{cor}[thm]{Corollary}

\newtheorem{defn}{Definition}[section]



\newcommand{\cref}[1]{{\rm (\ref{#1})}}

\newtheorem{rmk}[thm]{Remark}


\newcommand{\Li}{\operatorname{Li}}

\newcommand{\GIT}[1]{/\!\!/_{\kern-.2em #1 \kern0.1em}}

\renewcommand{\Re}{\mathfrak{Re}}

\renewcommand{\l}{\left}
\renewcommand{\r}{\right}
\newcommand{\bra}{\left\langle}
\newcommand{\ket}{\right\rangle}

\newcommand{\ev}{\operatorname{ev}}
\usepackage{color}

\def\bes{\begin{subequations}}
\def\ees{\end{subequations}}

\hyphenation{quan-ti-za-tion}

\begin{document}


\maketitle



\begin{prelims}

\vspace{-0.55cm}

\def\abstractname{Abstract}
\abstract{We recently formulated a number of  Crepant Resolution
Conjectures (CRC) for open
Gromov--Witten invariants of Aganagic--Vafa Lagrangian branes and verified them for the
family of threefold type $A$-singularities. In this paper we enlarge the body
of evidence in favor of our open CRCs, along two
different strands. In one direction, we  consider non-hard Lefschetz targets
and verify the disk CRC for local weighted projective planes. In the
other, we complete the proof of the quantum (all-genus) open CRC for hard Lefschetz toric
Calabi--Yau three dimensional representations by a detailed study of the $G$-Hilb resolution of
$[\mathbb{C}^3/G]$ for $G=\mathbb{Z}_2 \times \mathbb{Z}_2$. Our results have implications for
closed-string CRCs of Coates--Iritani--Tseng, Iritani, and Ruan
for this class of examples.}

\keywords{Crepant resolution conjecture; Gromov-Witten theory; open invariants; quantum cohomology; orbifold cohomology; mirror symmetry}

\MSCclass{14N35; 53D45}

\vspace{0.05cm}

\languagesection{Fran\c{c}ais}{%

\textbf{Titre. R\'esolutions cr\'epantes et cordes ouvertes II} \commentskip
\textbf{R\'esum\'e.} Nous avons r\'ecemment formul\'e un ensemble de
Conjectures de R\'esolutions Cr\'epantes (CRC) pour les invariants de Gromov--Witten ouverts des branes lagrangiennes de Aganagic--Vafa, et nous les avons
v\'erifi\'ees pour la famille des singularit\'es transverses de type $A$ en
dimension trois. Dans cet article, nous \'elargissons le faisceau de preuves en faveur de nos CRC ouvertes, et ce dans deux directions.
Dans la premi\`ere, nous consid\'erons des cibles  satisfiant la condition
dite de ``Lefschetz forte'' et v\'erifions la CRC du disque pour des plans projectifs \`a poids locaux.
Dans l'autre, nous compl\'etons la d\'emonstration de toutes les CRC ouvertes
quantiques (en tout genre) pour les repr\'esentations tridimensionnelles
toriques de type Calabi--Yau et v\'erifiant la condition de Lefschetz forte, ceci se faisant \`a travers une \'etude d\'etaill\'ee de la r\'esolution $G$-Hilb de
$[\mathbb{C}^3/G]$ pour $G=\mathbb{Z}_2 \times \mathbb{Z}_2$. Nos r\'esultats ont des cons\'equences sur les CRC pour les cordes ferm\'ees de Coates--Iritani--Tseng, Iritani et Ruan
pour cette classe d'exemples.}

\end{prelims}


\newpage

\setcounter{tocdepth}{1} \tableofcontents

\section{Introduction}

In a recent paper \cite{Brini:2013zsa}, we proposed two versions of
a Crepant Resolution Conjecture for open Gromov--Witten invariants
of Aganagic--Vafa orbi-branes inside semi-projective toric
Calabi--Yau 3-orbifolds:

\begin{itemize}
\item a general Bryan--Graber-type comparison between disk potentials after analytic
continuation ({\bf the disk CRC});
\item a stronger identification of the full open string partition function
at all genera and arbitrary boundary components for hard Lefschetz
targets ({\bf the quantized open CRC}).
\end{itemize}

We recall these statements more precisely in Section
\ref{sec:review}. Both conjectures were proved in
\cite{Brini:2013zsa} for the case of the crepant resolutions of
type~A threefold singularities, but they are expected to hold in
wider generality.
In particular, the disk CRC should hold true for general (non-hard
Lefschetz) toric CY3 that are projective over their affinization;
moreover, the proof of the quantized open CRC in
\cite{Brini:2013zsa} left out one exceptional example of (toric)
hard Lefschetz crepant resolution. The purpose of this paper is to
offer further evidence of the general validity of the disk CRC, as
well as to conclude the proof of the quantized open CRC for hard
Lefschetz toric three dimensional representations. \\

The first problem we  tackle is the disk CRC for non-hard Lefschetz
targets. We  concentrate our attention to local weighted projective
planes: our poster-child is the partial crepant resolution
$\pi:K_{\bbP(1,1,n)}\to \bbC^3/\bbZ_{n+2}$, where $\pi$ contracts
the image of the zero section to give the quotient singularity
$\frac{1}{n+2}(1,1,-2)$. In
particular, we establish the following \\

\noindent{\bf Theorem 1} [(Theorem \ref{thm:crcproj} and Corollary \ref{cor:crcproj})]: the
disk CRC holds for $Y=K_{\bbP(n,1,1)}$
and $\cX=[\bbC^3/\bbZ_{n+2}]$.\\

On a somewhat orthogonal direction, we complete the study of hard
Lefschetz crepant resolutions of three dimensional representations
by considering the $G$-Hilb resolution of $[\bbC^3/G]$ for $G=\bbZ_2
\times \bbZ_2$ -- the so-called {\it closed topological vertex}
geometry
studied in \cite{MR2129009}. \\


\noindent{\bf Theorem 2} [(Theorem \ref{thm:scr} and Corollary \ref{cor:scr})]: the quantized
CRC holds
for $\cX=[\bbC^3/\bbZ_{2}\times \bbZ_2]$ and $Y$ its canonical $G$-Hilb resolution.\\

In \cite{cavalieri2011open}, it was shown in detail in the specific
example of the $A_1$ threefold singularity that the local CRC for
$[\bbC^3/\bbZ_2]$ glues to a crepant resolution statement for
$K_{\bbP^1\times \bbP^1} \to [\cO(-1)_{\bbP^1}\oplus
\cO(-1)_{\bbP^1}/\bbZ_2]$. Theorem~2, the results of
\cite{Brini:2013zsa}, and a suitable generalization of the gluing
theorem of \cite{cavalieri2011open} would together imply the all
genus open CRC for all toric hard Lefschetz CY3 targets.

\subsection*{Context and further discussion}

Good part of the proof of Theorem~1
 relies on the well-established mirror symmetry framework of
 \cite{MR2529944,MR2486673}: we construct twisted
$I$-functions as hypergeometric modifications of the untwisted ones
 and then study their
analytic continuation corresponding to a change of chamber in the
K\"ahler moduli space of the target. The first step is standard
\cite{MR2578300,MR2276766,MR2510741}; for the second, we overcome
the technical intricacies of the Mellin--Barnes method
\cite{MR2486673} through a combined use of hypergeometric
resummation and a generalized Kummer-type connection formula for the
analytic continuation across a single wall. This technique has a
number of features of independent interest: it turns out to be
significantly more powerful than the usual Mellin--Barnes method,
and it is applicable to the study of wall-crossings in toric
Gromov--Witten theory in quite large generality. In particular, it
might be applied in combination with the mirror theorem of
\cite{ccit2} for the study, and hopefully the proof, of the
closed-string CRC
in the toric setting. \\

As for Theorem 2, our strategy to prove it follows closely ideas of
\cite{Brini:2013zsa} for the case of $[\bbC^2/\bbZ_{n} \times
  \bbC]$. In \cite{Brini:2013zsa,Brini:2014mha},
the Gromov--Witten/Integrable Systems  was employed to exhibit a
one-dimensional Landau--Ginzburg mirror model for the equivariant
quantum cohomology of type~A resolutions: the relevant
superpotential was identified with the dispersionless Lax function
of the $q$-deformed $(n+1)$-KdV hierarchy. For the case of
$[\bbC^3/\bbZ_2 \times
  \bbZ_2]$, the relevant Frobenius manifold turns out to be the coefficient space of a particular reduction of the
genus-zero Whitham hierarchy with three marked points
\cite{Krichever:1992qe}; a detailed study of this system and its
bihamiltonian structure will appear elsewhere. As was the case in
\cite{Brini:2013zsa}, this has two main upshots: in genus zero, it
allows a one-step study of wall-crossing beyond multiple walls; and
in higher genus, it significantly reduces the complexity of the
proof of the quantized version of the open CRC, which turns into an
exercise in all-order classical Laplace
asymptotics. \\

Limited to the class of examples considered here, our results  also
have implications for ordinary (closed) Crepant Resolution
Conjectures of Iritani \cite{MR2553377} and
Coates--Iritani--Tseng/Ruan \cite{MR2529944,MR3112518}. The proof of
the disk CRC in Section \ref{sec:localpp} establishes in particular a
natural fully-equivariant version of Iritani's $K$-theoretic Crepant
Resolution Conjecture for the examples at hand\footnote{A much more
general proof for semi-projective toric orbifolds
  has been announced by Coates--Iritani--Jiang.},
whereas the study of the quantized OCRC in Section \ref{sec:ctv} leads us
to verify the all-genus closed CRC with descendents for
$\cX=[\bbC^3/\bbZ_2 \times \bbZ_2]$.

\subsection*{Plan of the paper} The paper is organized as follows. In
Section \ref{sec:review}, we concisely review our setup in
\cite{Brini:2013zsa} for the disk and the quantized open CRC. We
then furnish a proof of the disk CRC in Section \ref{sec:localpp}, and
study its implications at the level of scalar potentials for each of
the two brane setups allowed by the geometry. In Section \ref{sec:ctv} we
study the closed topological vertex geometry: we first present a
mirror description in terms of a one-dimensional logarithmic
Landau--Ginzburg model, which is then used in the analytic
continuation relevant for the disk CRC and the all-order asymptotic
analysis necessary to establish the quantized OCRC.

\subsection*{Acknowledgements}
The authors would like to thank Hiroshi Iritani, Douglas Ortego,
Stefano Romano, Dusty Ross and Mark Shoemaker for their discussions
and comments related to this project. The second author gratefully
acknowledges support by NSF grant DMS-1101549, NSF RTG grant
1159964.

\section{Crepant Resolution Conjectures: a review}
\label{sec:review}

Given $\cX$ a Gorenstein algebraic orbifold and $Y \to X$ a crepant
resolution of its coarse moduli space, Ruan  conjectured
\cite{MR2234886} that the small quantum cohomologies of $Y$ and
$\cX$ should be isomorphic after analytic continuation and a
suitable identification of the quantum parameters. More recently,
Coates--Iritani--Tseng shaped -- and generalized -- Ruan's original
Crepant Resolution Conjecture (CRC) into a comparison of Lagrangian
cones via a symplectic isomorphism $\U:\HH_\cX\to\HH_Y$ between the
Givental spaces of $\cX$ and $Y$ \cite{MR2529944}; here $\rho$
denotes a choice of analytic continuation path. Further, Iritani's
theory of integral structures \cite{MR2553377} makes a prediction
for $\U$ based exclusively on the classical geometry of the targets.
In this section we briefly summarize some of the recent extensions
of the Coates--Iritani--Tseng CRC that this work relates to, and
that are relevant for our formulation of the CRC for open
Gromov--Witten invariants. Background, motivation, and extensive
discussions of the setup presented here can be found in our previous
paper \cite[Sec.~2
  and App.~A]{Brini:2013zsa};
the reader who is not familiar with the closed string CRC and its
higher genus analogues is referred to the  survey papers
\cite{MR3112518, MR2683208}.

\subsection{The disk CRC}
\label{sec:dcrc}

In \cite{Brini:2013zsa}, the authors formulate an Open Crepant
Resolution Conjecture (OCRC) as a comparison diagram relating
geometric objects in the Givental spaces of the targets, following
the philosophy of \cite{MR2529944}.
Let $\cW$ be a three-dimensional CY toric orbifold,  $p$ a fixed
point such that a neighborhood is isomorphic to $[\C^3/G]$, with $G
\cong \Z_{n_1}\times \ldots \times \Z_{n_l}$. The local  group
action is defined by the character vectors $(\vec{m}^1,
\vec{m}^2,\vec{m}^3)$ and a Calabi--Yau 2-torus action $T\simeq
(\bbC^*)^2$ is specified by weights  $(w_1,w_2,w_3)\in
H^\bullet_T({\rm pt})$. Fix a Lagrangian boundary condition $L$
which we assume to be on the first coordinate axis in this local
chart.
Define ${n_{\rm eff}}=\mbox{lcm} \{ n_j/\gcd(m_j^1,n_j) \ | j=
1,\ldots,l \}$ to be the size of the effective part of the action
along the first coordinate axis.
  There exist a map from an orbi-disk mapping to the first coordinate axis with winding $d$ and twisting\footnote{Here twisting refers to the image of the center of the disk in the evaluation map to the inertia orbifold.} $\vec{k}$ if the compatibility condition
\eq{ \frac{d}{{n_{\rm eff}}}-\sum_{j=1}^l\frac{k_jm_j^1}{n_j}\in
\bbZ \label{compat} }
is satisfied. Via the Atiyah--Bott isomorphism, the Chen--Ruan
cohomology ring of $[\C^3/G]$ is naturally identified with a part of
$H^\bullet_T(\mathcal{W})$, with generators $\mathbf{1_{p,k}}$.
Denoting by $\mathbf{1_{p}^{k}}$ the Poincar\'e dual of
$\mathbf{1_{p,k}}$, we define the disk tensor at $p$ as:
\begin{equation}
\overline{\DD}_{\cW,p}^+(z;\vec{w}) \triangleq \frac{\pi }{w_1|G|
\sin\left(\pi\left(\left\langle \sum_{j=1}^l\frac{k_jm_j^3}{n_j}
\right\rangle -\frac{w_{3}}{z}\right)\right)} \frac{1}{
\overline{\Gamma}_\cW^k }\mathbf{1_{p}^{k}}\otimes
\mathbf{1_{p}^{k}}, \label{eq:dr}
\end{equation}
where $ \overline{\Gamma}_\cW^k$ is the $\mathbf{1_{p,k}}$
coefficient of Iritani's homogenized Gamma function
(\cite{Brini:2013zsa}, Eqn. (27)). The global {\it disk tensor} for
$\cW$ is then defined as the sum of the disk tensors at the points
adjacent to the Lagrangian  $L$ in the toric diagram of $\cW$. Note
that $z$ is thought of as the descendant parameter  and hence
$\overline{\DD}_{\cW}^+(z;\vec{w})$ is naturally a tensor on
$\mathcal{H}_{\cW}$, the Givental space of $\cW$.

The {\it winding neutral disk potential} is defined to be the
contraction of the $J$ function of $\cW$ with the disk tensor.
Lowering  indices in the $J$ function with the Poincar\'e pairing,
we can write this as the composition:
\begin{equation}
\mathbb{F}_{L}^{\rm disk}(\tau,z,\vec{w}) \triangleq
\overline{\DD}_\cW^+ \circ J^\cW\left(\tau,z;\vec{w}\right).
\end{equation}

The winding neutral disk potential is a section of Givental space
that contains information about disk invariants at all winding, in
the sense that disk invariants  of winding $d$ appear in the
specialization of $\mathbb{F}_{L}^{\rm disk}(t,z,\vec{w})$ at $z=
n_{\rm eff} w_1/d$, as coefficients in front of monomials where the
compatibility  condition $\eqref{compat}$ is satisfied. Rather then
performing the specialization of the  variable $z$ to construct a
generating function for open invariants, we formulate the OCRC  as a
comparison diagram of winding neutral disk potentials, i.e. a
comparison among sections of Givental space.

\begin{proposal}[The OCRC] \label{theocrc}
For $\cW$ either $\cX$ or $Y$, let $\Delta_\cW$ denote the free
module in the cohomology of $\cW$ over $H(BT)$ spanned by the
$T$-equivariant lifts of Chen--Ruan cohomology classes having
age-shifted degree at most two. There exists a
$\bbC((z^{-1}))$-linear map of Givental spaces $\bbO: \HH_\X \to
\HH_Y$ and analytic functions $\mathfrak{h}_\cW: \Delta_\cW \to
\bbC$
such that \eq{ \mathfrak{h}_Y^{1/z}{\mathbb{F}_{L,Y}^{\rm
disk}}\big|_{\Delta_Y}= \mathfrak{h}_\cX^{1/z} \bbO\circ
\mathbb{F}_{L,\X}^{\rm disk}\big|_{\Delta_\cX} } upon analytic
continuation of quantum cohomology parameters.
\end{proposal}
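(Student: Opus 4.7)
The plan is to establish the identification of winding-neutral disk potentials by passing to the mirror $I$-function side, where both $J^\cX$ and $J^Y$ admit explicit hypergeometric descriptions, and then carrying out the analytic continuation and matching at the level of these formulas. First, I would use mirror theorems of Coates--Corti--Iritani--Tseng type for semi-projective toric CY orbifolds to rewrite $J^\cX$ and $J^Y$ as twisted $I$-functions obtained as hypergeometric modifications of the untwisted vertex functions of the corresponding toric stacks; under the contraction with $\overline{\DD}^+_\cW$ the winding-neutral disk potential becomes an explicit series (resp.~contour integral) in the K\"ahler parameters and the descendant variable $z$.

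The second step is to pin down the symplectic map $\bbO:\HH_\cX\to\HH_Y$. The natural candidate is the $K$-theoretic transformation predicted by Iritani's integral structure: it is determined entirely by the classical geometry of the crepant birational morphism $Y\to X$, and on Chen--Ruan classes of age-shifted degree at most two it lifts to a $\bbC((z^{-1}))$-linear isomorphism intertwining the Gamma class factors $\overline{\Gamma}^k_\cW$ that appear in the two disk tensors, up to the trigonometric prefactor from the Hori--Vafa residue. Compatibility of $\bbO$ with these Gamma factors is essentially a check on characteristic classes and, once toric data are fixed, reduces to a finite combinatorial identity.

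The third and most substantive step is the analytic continuation of the orbifold $I$-function from a neighborhood of the orbifold point in the secondary fan to a neighborhood of the large radius limit of $Y$. For hard Lefschetz targets such as the closed topological vertex $[\bbC^3/\bbZ_2\times\bbZ_2]$, I would exploit the one-dimensional logarithmic Landau--Ginzburg description arising from the dispersionless limit of a Whitham-type integrable hierarchy with three marked points, so that the analytic continuation becomes visible at the level of a period integral and the higher genus statement reduces to classical Laplace asymptotics. For non-hard Lefschetz cases such as $K_{\bbP(1,1,n)}$, one crosses a wall in K\"ahler moduli using a generalized Kummer connection formula combined with hypergeometric resummation, in place of the usual Mellin--Barnes machinery which is cumbersome here. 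The scalar functions $\mathfrak{h}_\cW$ are then read off as the normalization discrepancy between $J$- and $I$-functions together with the conjugation factors inherent to the winding-neutral construction.

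The main obstacle I expect is this analytic continuation step: wall crossings in the toric secondary fan generate a large amount of combinatorial data, and a naive Mellin--Barnes approach produces expressions that are hard to identify with the expected image on the $Y$-side; the point of the Kummer--hypergeometric strategy is precisely to render this identification transparent one wall at a time. A secondary obstacle is ensuring that the match is taken only on the age-shifted degree-at-most-two subspace $\Delta_\cW$: the restriction is essential, since outside $\Delta_\cW$ nonlinear deformations of $\bbO$ would enter and the simple $\bbC((z^{-1}))$-linear comparison would break down.
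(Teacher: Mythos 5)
Your plan follows essentially the same route as the paper. The logical skeleton there is: (i) the OCRC identity is a formal consequence of the Coates--Iritani--Tseng closed CRC once $\U$ is known to have Iritani's $K$-theoretic form, because the Gamma factors $\overline{\Gamma}^k_\cW$ built into the disk tensors cancel those appearing in \cref{conj:iri}, leaving exactly the classical map $\bbO=\Theta_Y\circ\overline{\rm CH}_Y\circ\overline{\rm CH}^{-1}_\X\circ\Theta_\X^{-1}$ of Proposal~\ref{pr:bbo} (this reduction is imported from the companion paper rather than re-proved here); and (ii) the remaining content is the verification of \cref{conj:iri} for the two families, via $I$-functions and the generalized Kummer continuation of \cref{lem:kum} for $K_{\bbP(n,1,1)}$, and via the one-dimensional Landau--Ginzburg periods for the closed topological vertex --- both of which you identify. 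Two caveats: your Step 2 understates the work in \cref{thm:crcproj}, where one must show that the connection matrix output by the Kummer formula actually telescopes (via the Gauss multiplication formula and Euler reflection, then summation over roots of unity) into $\overline{\Gamma}_Y\circ \overline{\rm CH}_Y \circ \overline{\rm CH}^{-1}_\X \circ \overline{\Gamma}^{-1}_\X$; this is the substantive computation, not a formality about characteristic classes. Also bear in mind that the statement is a Proposal, established in this paper only for these families; neither your argument nor the paper's yields it for a general semi-projective toric CY3 diagram, and the functions $\mathfrak{h}_\cW$ are fixed by classical geometric data (here trivially equal to $1$), not by the $J$-versus-$I$ mirror-map discrepancy, which instead enters through the identification of quantum parameters.
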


The analytic functions $\mathfrak{h}_\cW$ arise from the discrepancy
between the small $J$-function and the canonical basis-vector of
solutions of the Picard--Fuchs system: a precise definition and
discussion appears in \cite[App.~A.1.1]{Brini:2013zsa}. Here we only
remark that the functions  $\mathfrak{h}_\cW$ are completely
determined by classical geometric data. Because of the close
relationship between the disk tensor and the Gamma factors of the
central charge in Iritani's theory of integral structures
\cite{MR2553377, Brini:2013zsa},  we have a prediction for
  the transformation $\mathbb{O}$
in terms of the toric geometry of  the targets.

\begin{proposal}[The transformation $\mathbb{O}$]\label{pr:bbo}
Choose a grade restriction window $\mathfrak{W}$ in the GIT problem
to identify the $K$-theory lattices of $\cX$ and $Y$, and for $\cW=
\cX, Y$, define:
\ea{
 \Theta_\cW(\mathbf{1_{p,k}}) &\triangleq  \frac{1}{ \sin\left(\pi\left(\left\langle \sum_{j=1}^l\frac{k_jm_j^3}{n_j} \right\rangle -\frac{w_{3}}{z}\right)\right)}\mathbf{1_{p}^{k}}
\label{Theta}} Then the transformation $\mathbb{O}$ in
Proposal~\ref{theocrc} has the form: \eq{ \bbO=
\Theta_Y\circ\overline{\rm CH}_Y \circ \overline{\rm CH}^{-1}_\X
\circ {\Theta_\X}^{-1}, \label{eq:bbo} } where we denote by
$\overline{\rm CH}_\cW= z^{-\frac{1}{2}\deg }\mathrm{CH}_\cW$ the
matrix of Chern characters  (homogenized with respect to the
cohomological degree ``$\deg$")  in the bases given by
$\mathfrak{W}$.\end{proposal}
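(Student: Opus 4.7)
The plan is to factor the disk tensor through Iritani's $\hat\Gamma$-integral structure on the Givental space and then to invoke the $K$-theoretic Crepant Resolution Conjecture of \cite{MR2553377} to compare the two sides. First, unpacking the definition \eqref{eq:dr} of $\overline{\DD}^+_{\cW,p}$, the target $\cW$ enters nontrivially only through the Gamma-class coefficient $\overline{\Gamma}_\cW^k$; the sine prefactor depends only on the orbifold datum $\langle\sum_j k_j m_j^3/n_j\rangle$ and on $w_3/z$, and is precisely what is captured by the operator $\Theta_\cW$ of \eqref{Theta}. Twisted-sector by twisted-sector, and up to a universal scalar, one may therefore write $\overline{\DD}_\cW^+ = \Theta_\cW\circ G_\cW$, where $G_\cW$ is the diagonal operator of multiplication by $1/\overline{\Gamma}_\cW^k$ on the relevant twisted sectors.

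The key observation is that the composition $G_\cW\circ\overline{\mathrm{CH}}_\cW^{-1}$, applied to the distinguished $K$-theoretic brane class $[L]\in K(\cW)$ canonically attached to the Aganagic--Vafa Lagrangian, is precisely the relevant component of Iritani's integral structure map $\Psi_\cW\colon K(\cW)\otimes\bbC\to\HH_\cW$ evaluated at $[L]$. Iritani's $K$-theoretic CRC then asserts that, under the identification of the $K$-theory lattices of $\cX$ and $Y$ afforded by the grade restriction window $\mathfrak{W}$, there exists a symplectic isomorphism $\bbU_\rho\colon\HH_\cX\to\HH_Y$ intertwining the two integral structures, $\bbU_\rho\circ\Psi_\cX=\Psi_Y\circ\mathrm{FM}_{\mathfrak{W}}$, with $\mathrm{FM}_{\mathfrak{W}}$ the induced Fourier--Mukai transform. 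Setting $\bbO=\bbU_\rho$, substituting the factorization of the previous paragraph into both sides of the diagram of \cref{theocrc}, and cancelling the Gamma-class factors between $\cX$ and $Y$ produces exactly \eqref{eq:bbo}.

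The principal obstacle is verifying Iritani's $K$-theoretic CRC for the non-hard Lefschetz examples at hand: although a general proof has been announced for semi-projective toric orbifolds, for the local weighted projective planes and the closed topological vertex geometry treated here one must exhibit the grade restriction window and check the intertwining by hand. I would attack this by constructing hypergeometric modifications of the untwisted $I$-functions of $\cX$ and $Y$ and then performing a chamber-by-chamber Kummer-type analytic continuation across the relevant wall in the K\"ahler moduli space, in the spirit of the technique developed in \cref{sec:localpp}. A secondary but not purely cosmetic subtlety is the correct choice of branch for the age shift $\langle\cdot\rangle$ together with the equivariant lift of $\hat\Gamma_\cW$: these must be aligned with the prescription provided by $\mathfrak{W}$ in order for $\bbO$ to descend to the claimed closed form, and tracking this compatibility is where most of the technical work actually sits.
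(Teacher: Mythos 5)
Your proposal is correct and follows essentially the same route as the paper: the disk tensor \eqref{eq:dr} factors as $\Theta_\cW$ composed with multiplication by $1/\overline{\Gamma}_\cW^k$, so that once $\U$ is given by the equivariant $K$-theoretic form \eqref{eq:iri} of Iritani's Crepant Transformation Conjecture, the Gamma factors cancel and $\bbO=\overline{\DD}_Y^+\circ\U\circ(\overline{\DD}_\cX^+)^{-1}$ collapses to \eqref{eq:bbo}. You also correctly locate the real work where the paper puts it, namely in verifying \cref{conj:iri} for the examples at hand by hypergeometric resummation and Kummer-type analytic continuation.
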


In \cite{Brini:2013zsa}, we show that Proposal~\ref{theocrc} follows
from the Coates--Iritani--Tseng's CRC. Proposal~\ref{pr:bbo}
coincides with $\U$ being predicted by a natural equivariant
version\footnote{The fact that $\Gamma$-integral
  structures match with the natural $B$-model integral structures under mirror
  symmetry was proved in \cite{MR2553377} for compact toric orbifolds. A
  general proof of the fully equivariant version of Iritani's $K$-theoretic
  CRC has been
  announced by Coates--Iritani--Jiang.}  of Iritani's $K$-theoretic Crepant
Transformation Conjecture \cite{MR2553377}:

\begin{conj}
\label{conj:iri} For $\cW= \cX, Y$, denote by
$\overline{\Gamma}_\cW$ the diagonal matrix whose $kk$ entry is
$\overline\Gamma_\cW^k$. Then, for every choice $\mathfrak{M}$ of
grade restriction window, there exists a choice of analytic
continuation path $\rho$ such that \eq{\label{eq:iri} \U=
\overline{\Gamma}_Y\circ \overline{\rm CH}_Y \circ \overline{\rm
CH}^{-1}_\X \circ {\overline{\Gamma}^{-1}_\X}. }
\end{conj}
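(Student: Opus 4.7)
The plan is to establish the conjecture for the two families of examples treated in the paper at the level of $I$-functions and Lagrangian cones, by combining the mirror theorem for toric orbifolds with a careful analytic continuation in the K\"ahler moduli space. The first step is to write down small $I$-functions $I_{\cX}(\tau_\cX,z)$ and $I_Y(\tau_Y,z)$ for the orbifold and its crepant resolution using the Givental--Coates--Corti--Iritani--Tseng mirror theorem for semi-projective toric Deligne--Mumford stacks. In all the examples at hand each $I_\cW$ is a Barnes-type hypergeometric series whose coefficients factor into products of $\Gamma$-functions indexed by the rays of the fan and the twisted sectors of the inertia stack; this makes the dependence on $z$ and on the equivariant parameters completely explicit, and identifies the Lagrangian cone $\mathcal{L}_\cW$ near the large radius / large orbifold point.

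The second, and most delicate, step is to analytically continue $I_\cX$ along a path $\rho$ in the secondary toric variety crossing the wall(s) that separate the orbifold chamber from the smooth resolution chamber. I would carry this out by enhancing the Mellin--Barnes method, as in \cref{sec:localpp}, with a generalized Kummer-type connection formula expressing the continued hypergeometric series directly as another series of the same type. For the $\bbP(1,1,n)$ examples this is a single wall-crossing and the Kummer formula suffices; for the closed topological vertex, where several walls intervene, one bypasses the iterated Mellin--Barnes analysis by using the one-dimensional Landau--Ginzburg mirror arising from the Whitham reduction of \cref{sec:ctv}, so that the continuation is performed in one stroke on the mirror side.

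The third step is identification. Matching the analytically continued $I_\cX$ against $I_Y$ in a common neighborhood of the wall produces a $\bbC((z^{-1}))$-linear symplectic isomorphism $\U : \HH_\cX \to \HH_Y$; it remains to check that it coincides with the right-hand side of \eqref{eq:iri} for a suitable grade restriction window $\mathfrak{M}$. The key point is that the coefficients of each $I_\cW$ in its large-structure limit can be rewritten, via the reflection and multiplication formulas for $\Gamma$, as the image under $\overline{\Gamma}_\cW \circ \overline{\mathrm{CH}}_\cW$ of a natural basis of the $K$-theory lattice adapted to $\mathfrak{M}$; consequently the transition matrix produced by the analytic continuation is exactly $\overline{\mathrm{CH}}_Y \circ \overline{\mathrm{CH}}^{-1}_\cX$ in those bases, conjugated by the $\overline{\Gamma}$-factors on either side, as required.

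The main obstacle is the analytic continuation step. Iterated Mellin--Barnes residue manipulations rapidly become prohibitive once the secondary fan has more than one wall between the chambers being compared, and they tend to obscure the $K$-theoretic meaning of the resulting connection matrix. The two devices exploited in the body of the paper --- a Kummer-type hypergeometric resummation for the single-wall non-hard-Lefschetz case, and the one-dimensional Landau--Ginzburg mirror coming from the Whitham reduction in the multi-wall hard Lefschetz case --- are precisely designed to address this obstacle, and they are what makes the conjecture accessible for the examples studied here.
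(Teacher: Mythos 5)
Your proposal is correct and follows essentially the same route as the paper: the statement is only verified for the two example families, via the hypergeometric $I$-functions and the generalized Kummer connection formula for $K_{\bbP(n,1,1)}$, via the one-dimensional Landau--Ginzburg mirror and its twisted periods for the closed topological vertex, and in both cases the final identification with $\overline{\Gamma}_Y\circ \overline{\rm CH}_Y \circ \overline{\rm CH}^{-1}_\X \circ {\overline{\Gamma}^{-1}_\X}$ is made by telescoping the Gamma factors through the multiplication formula and Euler's reflection identity. This matches the proofs of \cref{thm:crcproj} and of the corollary following \cref{prop:AB}.
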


From Proposal~\ref{theocrc} one can extract comparison statements
about generating functions for disk invariants. The strongest
statement can be made when the Lagrangian boundary condition
intersects a leg whose isotropy is preserved in the crepant
transformation.

\begin{proposal}[Scalar disk potentials]\label{sdp}
Let $L$ be a Lagrangian boundary condition on $\cX$ that intersects
a torus invariant line whose generic point has isotropy group $G_L$,
and such that  if we denote $L'$ be the corresponding boundary
condition in $Y$, then $L'$ also intersects a torus invariant line
with generic isotropy group $G_L$. For $\cW= \cX, Y$, define the
{\it  scalar disk potential\footnote{ We choose to define the scalar
disk potential as a generating function for the {\it absolute value}
of disk invariants. In the course of the verifications of Proposal
\ref{sdp}, one may observe that the scalar potentials could be
matched on the nose with the use of appropriate matrices of roots of
unity - that in the end contribute just signs, albeit with some
non-trivial pattern. We have deliberately forgone to keep track of
these phenomena, especially in light of the choice-of-signs the
theory of open invariants is everywhere laden with.}  }: \ea{
F_{\cW}^{\rm disk}(\tau,y,\vec{w}) &=  \sum_d\frac{y^d}{d!}\sum_n
\frac{1}{n!}\left|\langle \tau, \ldots, \tau
\rangle_{0,n}^{\cW,L,d}\right| \triangleq \sum_d
\frac{y^d}{d!}\left|\left( \DD^+_\cW(d;\vec{w}),
J^\cW\left(\tau,\frac{n_{\rm eff}w_1}{d}\right)\right)_{\cW}\right|.
} Then, upon identifying the insertion variables via the change of
variable prescribed by the closed CRC, we have:
\eq{ F_{L',Y}^{\rm disk}(\tau,\mathfrak{h}_Y^{\frac{1}{n_{\rm
eff}w_1}} y,\vec{w}) = F_{L,\X}^{\rm
  disk}(\tau,\mathfrak{h}^{\frac{1}{n_{\rm eff}w_1}} _\cX y,\vec{w}).
}
\end{proposal}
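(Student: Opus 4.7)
The plan is to derive Proposal~\ref{sdp} as a direct consequence of Proposal~\ref{theocrc} (the OCRC for winding neutral disk potentials) combined with a careful analysis of how the transformation $\bbO$ restricts to the sectors carrying the disk invariants.

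The first step is to unpack the scalar disk potential in terms of the winding neutral potential: by the very definition recalled in the statement,
\[
F_{\cW}^{\rm disk}(\tau,y,\vec w) = \sum_d \frac{y^d}{d!} \Big| \mathbb{F}_{L,\cW}^{\rm disk}(\tau,z,\vec w)\big|_{z = n_{\rm eff}w_1/d} \Big|,
\]
so that extracting the coefficient of $y^d/d!$ reduces matters, for each winding $d$, to an evaluation of the winding neutral disk potentials at $z = n_{\rm eff}w_1/d$. Next, I would invoke Proposal~\ref{theocrc}: after identifying the quantum cohomology parameters by the closed CRC and analytically continuing, one has $\mathfrak{h}_Y^{1/z}\mathbb{F}_{L,Y}^{\rm disk}|_{\Delta_Y} = \mathfrak{h}_\cX^{1/z}\, \bbO \circ \mathbb{F}_{L,\X}^{\rm disk}|_{\Delta_\cX}$. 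Specializing $z = n_{\rm eff}w_1/d$ turns the factors $\mathfrak{h}_\cW^{1/z}$ into $\mathfrak{h}_\cW^{d/(n_{\rm eff}w_1)}$; these reassemble, after summing over $d$, into precisely the rescaling of the insertion variable $y$ by $\mathfrak{h}_\cW^{1/(n_{\rm eff}w_1)}$ that appears in the statement.

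The final and decisive step is to show that under the hypothesis that $L$ and $L'$ both meet torus-invariant lines with generic isotropy $G_L$, the map $\bbO$ reduces on the relevant components to an overall phase/sign. Concretely, the disk tensors $\overline{\DD}^+_{\cW,p}$ of \eqref{eq:dr} project each $\mathbb{F}_{L,\cW}^{\rm disk}$ onto the one-dimensional twisted sector associated to the age along $L$ (resp.\ $L'$); the preservation of isotropy guarantees that this distinguished sector is the same on $\cX$ and $Y$ (and in particular lies in the grade restriction window $\mathfrak{W}$), so that the Chern character identification $\overline{\rm CH}_Y\circ \overline{\rm CH}^{-1}_\X$ inside $\bbO$ acts as multiplication by a scalar, and the factors $\Theta_\cW$ in \eqref{Theta} contribute only the trigonometric prefactors already absorbed in the disk tensors. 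Taking absolute values in the definition of $F^{\rm disk}$ then washes out this residual unimodular scalar, delivering the claimed identity of generating functions.

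The main obstacle will be the last step: even under the preserved-isotropy assumption one must verify that the composition $\overline{\rm CH}_Y\circ \overline{\rm CH}^{-1}_\X$ really is diagonal on the sectors picked out by the disk tensor, rather than mixing them with other twisted components of the same age. This is essentially a book-keeping exercise about the fan-theoretic data of $\cX$ and $Y$ together with the grade restriction window $\mathfrak{W}$ of Proposal~\ref{pr:bbo}, but it is the step where the geometric hypothesis on $L,L'$ must be used in an essential way; the remaining parts of the argument are formal manipulations of Proposal~\ref{theocrc} and the definitions.
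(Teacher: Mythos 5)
Your overall skeleton --- specialize the OCRC at $z = n_{\rm eff}w_1/d$, collect the factors $\mathfrak{h}_\cW^{d/(n_{\rm eff}w_1)}$ into the rescaling of $y$, and then argue that $\bbO$ degenerates to a unimodular scalar on the sectors carrying winding-$d$ invariants --- is exactly the strategy the paper follows when it verifies Proposal~\ref{sdp} for $K_{\bbP(n,1,1)}$ in \cref{thm:nonspecleg,thm:speclegodd,thm:speclege}. Note, however, that the paper never proves Proposal~\ref{sdp} in general: it is stated as a Proposal and only verified example by example, and your argument does not close that gap either, because the step you defer to the end is where all the content lives.

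Concretely, the claim that $\overline{\rm CH}_Y\circ\overline{\rm CH}^{-1}_\X$ ``acts as multiplication by a scalar'' on a distinguished sector does not follow from the preserved-isotropy hypothesis. In the paper's computations the specialized map $\bbO_d$ is a full matrix whose $(k,l)$ entry is a ratio of sines times a sum of roots of unity, e.g.\ $\frac{1}{n+2}\sum_j \re^{2\pi\ri j(kn+l(n+2)-d)/(n(n+2))}$ as in \cref{eq:od2}; one must check (i) that this sum vanishes unless a congruence mod $n(n+2)$ holds, (ii) that the compatibility conditions \eqref{compat} on $\cX$ (a congruence mod $n+2$) and on $Y$ (a congruence mod $n$) are glued into that single congruence by the Chinese remainder theorem, and (iii) that when they hold the sine ratio is exactly $\pm1$. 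None of this is formal, and the $n$-even case shows the map need not be ``diagonal'' in your sense: there the residual $\bbZ_2$ isotropy forces the specialization $z=(n+2)\a_1/(2d)$, the congruences admit four solution pairs $(k,l)$, and only two of the corresponding entries of $\bbO_d$ equal $\pm1$ while the other two vanish --- a structure your argument would not predict. Your description of the disk tensor as projecting onto a single twisted sector is also inaccurate: $\overline{\DD}^+_\cW$ is supported on all sectors, and the selection of the sector carrying winding $d$ happens only after specializing $z$, via \eqref{compat}. So your proposal is a correct outline of the reduction to the matrices $\bbO_d$, but the decisive step is asserted rather than established, and it is precisely the step on which the paper's verifications spend their effort.
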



\subsection{Hard Lefschetz targets: the quantized OCRC}

When $\cX$ satisfies the hard Lefschetz condition\footnote{This is
  $\mathrm{age}(\phi)=\mathrm{age}(I^*(\phi))$ for all $\phi \in H_{\rm
    orb}(\cX)$, where $I: I\cX \to I\cX$ is the canonical involution on the
  inertia stack.}, a natural generalization of the CRC to higher genus GW
invariants is achieved by canonical quantization \cite{MR2529944,
  MR3112518}: the all-genus Gromov--Witten
partition functions are viewed as elements of the respective Fock
spaces \cite{MR1866444,MR1901075}, conjecturally matched by the
Weyl-quantization of the classical canonical transformation $\U$.

\begin{conj}[The hard Lefschetz quantized CRC, from \cite{MR2529944, MR3112518}]\label{conj:scr}
Let $\cX \rightarrow X \leftarrow Y$ be a Hard Lefschetz crepant
resolution diagram for which the Coates--Iritani--Tseng CRC holds.
For $\cW$ either $\cX$ or $Y$, let $Z_\cW$ denote the generating
function of disconnected Gromov--Witten invariants of $\cW$ viewed
as an element of the Fock space of $H_{\rm orb}(\cW)\otimes
\bbC((z))$, and $\U$ the Coates--Iritani--Tseng morphism of Givental
spaces identifying the Lagrangian cones of $\cX$ and $Y$. Then
\eq{ Z_Y =  \widehat{\U} Z_\cX \label{eq:sqcrc} } \label{conj:sqcrc}
\end{conj}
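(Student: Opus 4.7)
The plan is to reduce the all-genus identity \eqref{eq:sqcrc} to its genus-zero ancestor via the Givental--Teleman reconstruction theorem for semisimple cohomological field theories. Under the hard Lefschetz hypothesis the quantum cohomologies of both $\cX$ and $Y$ are generically semisimple Frobenius manifolds, so on a dense open subset Givental's formula expresses $Z_\cW$ as the action of a quantized symplectic operator of the schematic form $\widehat{S}_\cW^{-1}\widehat{\Psi}_\cW\widehat{R}_\cW$ on a tensor product of Kontsevich--Witten tau functions. Since the genus-zero CRC is part of the hypothesis, $\U$ identifies the Lagrangian cones of $\cX$ and $Y$; in particular it intertwines the fundamental solutions $S_\cW$ at any chosen semisimple base point after analytic continuation along $\rho$. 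The main content of the proof is then to show that the $R$-matrices and canonical frames are also compatible under $\U$, after which Weyl-quantizing the identity and evaluating on the Kontsevich--Witten vacuum yields \eqref{eq:sqcrc}.

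For the concrete example of $\cX=[\bbC^3/\bbZ_2\times\bbZ_2]$ targeted in Theorem~2, I would make this plan effective using the one-dimensional Landau--Ginzburg mirror model alluded to in the introduction, i.e.\ the appropriate reduction of the three-pointed genus-zero Whitham hierarchy. The superpotential $W$ delivers the canonical coordinates as its critical values and the flat coordinates via oscillating integrals, so both the $S$-matrix and the $R$-matrix admit explicit formulas in terms of residues and saddle-point expansions around the critical points of $W$. The analytic continuation across walls in the K\"ahler moduli space then becomes a deformation of the critical configuration of $W$ interpolating between the two phases, and the genus-zero CRC (i.e.\ the existence of the transformation $\U$ predicted by Proposal~\ref{pr:bbo}) can be read off by matching critical values and flat coordinates through the prescribed change of variables.

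The main obstacle will be controlling the wall-crossing behavior of the full asymptotic expansion, not just the leading genus-zero term. Standard Mellin--Barnes techniques give the $J$-functions at leading order but become cumbersome for higher-order ancestors and in the presence of several walls. The strategy proposed in the paper sidesteps this by performing a single-step analytic continuation inside the Frobenius manifold of the LG mirror and then expanding the oscillating integrals at each critical point to all orders in $z$. I would execute this step by first verifying that the analytic continuation preserves the Gauss--Manin connection of the LG model, so that the two sides define the same semisimple CohFT up to a classical gauge; then reading off from the saddle-point expansions that the $R$-matrices on $\cX$ and $Y$ are related by conjugation by the classical part of $\U$; and finally quantizing the resulting identity term-by-term in $\hbar$ to obtain \eqref{eq:sqcrc} for the closed topological vertex geometry, thereby establishing Conjecture~\ref{conj:scr} in the remaining hard Lefschetz case.
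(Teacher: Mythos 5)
Your plan coincides with the paper's: the statement is reduced (via Givental's quantization formula for semisimple CohFTs) to the equality of the canonical $R$-calibrations of $\cX$ and $Y$ on the semisimple locus, and for the closed topological vertex this equality is established by all-order saddle-point expansion of the twisted periods of the one-dimensional Landau--Ginzburg mirror at the critical points of the superpotential, with the analytic continuation performed in one step inside the common Frobenius manifold. The only point you underemphasize is the one carrying most of the technical weight in the paper: pinning down the \emph{normalizations} of the two $R$-calibrations at their respective boundary points (via Stirling asymptotics of Euler Beta integrals at the orbifold point, and a careful rescaling of integration variables at large radius where critical points coalesce or escape to infinity), which is what actually forces $R^\cX=R^Y$ rather than merely relating them up to constants.
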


In the context of torus-equivariant Gromov--Witten theory of
orbifolds with zero-dimensional fixed loci, the hard Lefschetz
quantized CRC can be proven in two steps
\cite[Prop.~6.3]{Brini:2013zsa}, as follows.

\begin{enumerate}
\item[\rm (1)] Combining the Coates--Givental/Tseng quantum Riemann--Roch theorem
  \cite{MR2276766,MR2578300} with Givental's quantization formula in a neighborhood of the
  large radius points of $\cW$ identifies a ``canonical"
  $R$-calibration defined locally by the genus $0$ GW theory of $\cW$;
\item[\rm (2)]  Conjecture \ref{conj:sqcrc} then follows from establishing the equality, upon
  analytic continuation, of the canonical $R$-calibrations of $\cX$ and $Y$ on the
  locus where the quantum product is semi-simple.
\end{enumerate}

The main consequence drawn in \cite{Brini:2013zsa} for open
Gromov--Witten invariants is a CRC statement for all genera and
number of holes.

\begin{proposal}[The quantized OCRC \cite{Brini:2013zsa}]
\label{conj:manyholes} Let $\X \rightarrow X \leftarrow Y$ be a Hard
Lefschetz diagram for which the higher genus closed CRC holds.
 Define the \textit{genus g, $\ell$-holes winding neutral potential}
 $\mathbb{F}^{g,\ell}_{\cW, L}:H(\cW)\to \HH_\cW^{\otimes \ell}$ as
%
\eq{ \mathbb{F}^{g,\ell}_{\cW, L}(\tau,z_1,\ldots,z_\ell,\vec{w})
\triangleq \overline{\DD}_\cW^{+\otimes \ell} \circ
J^\cW_{g,\ell}\left(\tau,z_1,\ldots, z_\ell;\vec{w}\right),
\label{mholepot} }
where $J^\cW_{g,\ell}$ encodes  genus $g$, $\ell$-point descendent
invariants: \eq{ J^\cW_{g,\ell}(\tau,\bfz; \vec{w})\triangleq
\bra\bra \frac{\phi_{\alpha_1}}{z_1-\psi_1}, \dots,
\frac{\phi_{\alpha_\ell}}{z_\ell-\psi_\ell}
\ket\ket_{g,\ell}\phi^{\alpha_1}\otimes\cdots\otimes\phi^{\alpha_\ell}.
\label{eq:JZn} }
Further, let  $\bbO^{\otimes \ell}= \bbO(z_1)\otimes\ldots\otimes
\bbO(z_\ell)$. Then, \eq{ {\mathbb{F}_{L',Y}^{g,\ell}}=
\bbO^{\otimes \ell}\circ \mathbb{F}_{L,\X}^{g,\ell}.
 }
\end{proposal}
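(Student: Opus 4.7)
The strategy is to descend from the higher-genus closed CRC at the level of the full partition function (\cref{conj:sqcrc}) down to a relation between multi-point descendant $J$-functions, and then to contract with the disk tensor. In Givental's Fock-space formalism, $J^\cW_{g,\ell}$ is the object encoding $\ell$-point descendant correlators and is obtained by applying $\ell$ Fock-space state-insertion operators of the form $\phi_\alpha/(z_i-\psi_i)$ to the partition function $Z_\cW$. Since $\U$ is a $\bbC((z^{-1}))$-linear symplectic isomorphism identifying the Lagrangian cones of $\X$ and $Y$, its Weyl quantization $\widehat{\U}$ transports state-insertion operators by $\U$ up to insertion-independent (loop) contributions controlled by the Gaussian part of the quantization. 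Under the two-step identification of the canonical $R$-calibrations of $\cX$ and $Y$ on the semisimple locus recalled after \cref{conj:sqcrc} (and used in \cite[Prop.~6.3]{Brini:2013zsa}), these loop contributions combine into a central factor that does not entangle the $\ell$ descendant insertions, yielding the descendant-level identity $J^Y_{g,\ell} = \U^{\otimes \ell}\circ J^\X_{g,\ell}$ after the CRC change of variables.

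The second step is to contract each of the $\ell$ slots with the disk tensor. A direct comparison of the definitions \eqref{eq:dr} and \eqref{eq:bbo}, together with \cref{conj:iri} which identifies $\U$ with the product $\overline{\Gamma}_Y\circ \overline{\rm CH}_Y \circ \overline{\rm CH}^{-1}_\X \circ \overline{\Gamma}_\X^{-1}$, gives the one-slot compatibility
\eq{
\overline{\DD}_Y^+ \circ \U \;=\; \bbO \circ \overline{\DD}_\X^+
}
on the relevant subspace of $\HH_\cX$ after analytic continuation: the Iritani $\Gamma$-factors in $\U$ are precisely cancelled by the homogenized gamma factors $\overline{\Gamma}_\cW^k$ built into $\overline{\DD}_\cW^+$, producing the reduced transformation $\Theta_Y\circ \overline{\rm CH}_Y\circ \overline{\rm CH}_\X^{-1}\circ \Theta_\X^{-1}= \bbO$. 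Applying this identity in each of the $\ell$ tensor slots yields
\eq{
\overline{\DD}_Y^{+\otimes \ell}\circ J^Y_{g,\ell} \;=\; \bbO^{\otimes \ell}\circ \overline{\DD}_\X^{+\otimes \ell}\circ J^\X_{g,\ell},
}
which, by \eqref{mholepot}, is precisely $\mathbb{F}^{g,\ell}_{L',Y} = \bbO^{\otimes \ell}\circ \mathbb{F}^{g,\ell}_{L,\X}$.

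The main obstacle is the first step: cleanly extracting the descendant-level transformation $J^Y_{g,\ell}=\U^{\otimes \ell}\circ J^\X_{g,\ell}$ from the Fock-space identity $Z_Y = \widehat{\U} Z_\X$. This requires careful bookkeeping of (i) the Givental $R$-calibration, which must be shown compatible under $\U$ on the semisimple locus, and (ii) the Gaussian contractions in the Weyl quantization, which must be controlled so as to produce only an overall central factor independent of the $\ell$ insertions. In the hard Lefschetz, torus-localized setting with zero-dimensional fixed loci considered here, both points are handled by the two-step argument of \cite[Prop.~6.3]{Brini:2013zsa}: the $R$-calibration compatibility is the content of the second bullet of the proof scheme for \cref{conj:sqcrc} recalled in the review, while the Gaussian contractions drop out of the multilinear descendant data. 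A concrete route is to work in a neighborhood of a semisimple point of the quantum product, where Givental's ancestor-potential quantization formula expresses $Z_\cW$ as an explicit product of Kontsevich--Witten $\tau$-functions transformed by $R$- and $\Psi$-factors, from which the descendant transformation can be read off directly; analyticity in the K\"ahler parameters then extends the identity along the analytic continuation path $\rho$ chosen in \cref{conj:iri}.
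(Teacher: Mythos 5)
The paper does not actually prove this Proposal here: it is recalled verbatim from \cite{Brini:2013zsa} in the review section, and the only proof content supplied in this paper is the two-step outline following \cref{conj:sqcrc} (Givental quantization plus matching of canonical $R$-calibrations) together with the citation to \cite[Prop.~6.3]{Brini:2013zsa}. Measured against that, your two-step architecture --- first transport the descendant data $J^\cW_{g,\ell}$ by $\U^{\otimes\ell}$, then contract each slot using the compatibility $\overline{\DD}_Y^+\circ\U=\bbO\circ\overline{\DD}_\cX^+$ --- is the same decomposition as the cited argument. Your second step is correct and is precisely the design principle of $\bbO$ in Proposal~\ref{pr:bbo}: since $\overline{\DD}_\cW^+$ is, up to the classical prefactor $\pi/(w_1|G|)$, the composition $\Theta_\cW\circ\overline{\Gamma}_\cW^{-1}$ followed by the fixed-point projection, the Gamma matrices in $\U=\overline{\Gamma}_Y\overline{\rm CH}_Y\overline{\rm CH}_\cX^{-1}\overline{\Gamma}_\cX^{-1}$ telescope against those in the disk tensors and leave exactly \cref{eq:bbo}. (You should still check that the classical prefactors on the two sides agree; in the hard Lefschetz setting they do, which is why no $\mathfrak{h}$-factors appear in this Proposal, unlike in Proposal~\ref{theocrc}.)

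The gap is in your first step as justified in your opening paragraph. It is not true that the Weyl quantization of a general symplectomorphism ``transports state-insertion operators by $\U$ up to insertion-independent contributions'': the upper-triangular part of $\U$ quantizes to $\exp\bigl(\tfrac{\hbar}{2}V(\partial,\partial)\bigr)$, whose propagator $V$ genuinely connects pairs of insertions (it produces the edge terms of Givental's graph sum), and the lower-triangular part contributes a quadratic form $W(\mathbf{q},\mathbf{q})/\hbar$ that modifies the $(g,\ell)=(0,2)$ descendant correlators. Neither is a central factor, so $Z_Y=\widehat{\U}Z_\cX$ does not by itself yield $J^Y_{g,\ell}=\U^{\otimes\ell}J^\cX_{g,\ell}$. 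The correct route is the one you only gesture at in your closing paragraph and must actually carry out: since both theories satisfy Givental's quantization formula \cref{givqf} with equal $R$-calibrations on the semisimple locus, their ancestor graph sums coincide vertex-by-vertex and edge-by-edge, and the descendant $\ell$-point functions then differ only through the $S$-matrix factors decorating the $\ell$ legs, whose discrepancy is exactly $\U$; the exceptional $(0,2)$ term is controlled separately by the symplectic property of $\U$. That leg-by-leg comparison is the substance of \cite[Prop.~6.3]{Brini:2013zsa} and is what your proof needs to supply rather than invoke.
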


\section{Example 1: local weighted projective planes}
\label{sec:localpp}

\subsection{Classical geometry}

The family of geometries we study arises as the GIT quotient \eq{
\bbC^4/\!\!/_\chi \ \bbC^\star, } with torus action  on the
coordinates $(x_1, x_2,x_3,x_4)$ specified by the charge matrix \eq{
M = \l(\bary{cccc} n & 1 & 1 & -2-n \eary \r). }
The quotients obtained as the character $\chi$ varies are the toric
varieties whose fans are represented in Figure \ref{fig:fanlp11neven}.
%
\begin{figure}[b]
\centerline{\includegraphics{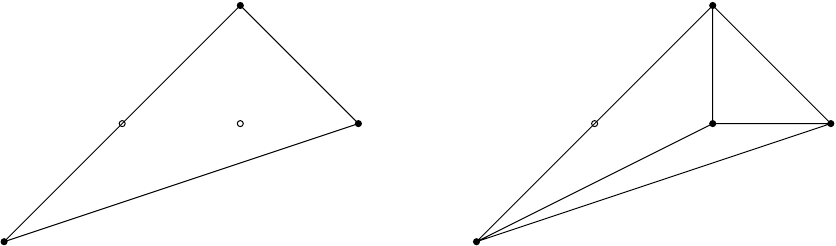}}
\caption{ A height $1$ slice of the fans of  $[\bbC^3/\bbZ_{n+2}]$
(left) and local
  $\bbP(n,1,1)$ (right) for $n=2$.}
\label{fig:fanlp11neven}
\end{figure}
The right hand side of Figure \ref{fig:fanlp11neven} corresponds to $\chi
> 0$ . The irrelevant ideal is

%
%
%
\eq{ \cI_{\rm LR} \triangleq \bra x_1, x_2, x_3\ket }
and the resulting geometry $Y$ is the total space of
$\cO(-n-2)_{\bbP(n,1,1)}$; $[x_1:x_2:x_3]$ serve as
(quasi)-homogeneous coordinates for the base, and $x_4$ is an affine
fiber coordinate. Torus fixed points and invariant lines are: \ea{
\label{eq:L1pn11} L_1 =& V(x_1, x_4),&
L_2  = & V(x_2, x_4), &
L_3 = & V(x_3, x_4), \\
\label{eq:P1pn11} P_1 =& V(x_2, x_3, x_4), &
P_2  = & V(x_1, x_3, x_4), &
P_3 = & V(x_1, x_2, x_4). }
We have $L_1\simeq \bbP^1$, $L_2,L_3\simeq \bbP(1,n)$, $P_2$, $P_3
\simeq
[\mathrm{pt}]$, $P_1\simeq B\bbZ_n$. The fibers over the fixed points $P_2$ and $P_3$ are non-gerby. The fiber over $P_1$ is non-gerby when when $n$ is odd; when $n$ is even, it has a  $\bbZ_2$-subgroup as a stabilizer. \\

When $\chi $ is negative we have the fan on left hand side of
Figure \ref{fig:fanlp11neven}, which gives the irrelevant ideal
%
%
%
\eq{ \cI_{\rm OP} \triangleq \bra x_4\ket. }
Quotienting by $x_4 \neq 0$ gives a residual $\bbZ_{n+2}$ action on
$\bbC^3$ with weights $(n,1,1)$; the resulting orbifold
$[\bbC^3/\bbZ_{n+2}]$ will be denoted by $\cX$. Moving across
$\chi=0$
\eq{ \l[\bary{c}
x_1 \\
x_2 \\
x_3 \\
x_4 \\
\eary \r] \in \bbC^4/\!\!/\bbC^* \rightarrow \l[\bary{c}
x_1 x_4^{\frac{n}{n+2}} \\
x_2 x_4^{\frac{1}{n+2}}\\
x_3 x_4^{\frac{1}{n+2}} \\
\eary \r]\in \bbC^3/\bbZ_{n+2} }
where we denoted by $[x_1, \dots, x_n]$ the equivalence class in the
appropriate quotient, is a birational contraction of the image of
the zero section $s:\bbP(n,1,1)\hookrightarrow K_{\bbP(n,1,1)}$.

\subsubsection{Bases for cohomology}
We consider a Calabi--Yau $2$-torus action on $Y$ and $\cX$,
descending from an action on $\bbC^4$  with geometric weights
$(\alpha_1, \alpha_2, -(\a_1+\a_2),0)$. Note that we consider the
geometric weights as elements  of $H^2(BT)$: an integer $\alpha$
corresponds to the first Chern class of the representation $t
\mapsto t^\alpha$.  The tangent weights at the torus fixed points
are depicted
in the toric diagrams in Figure \ref{fig:td}. \\

\begin{figure}[t]
\centerline{\input{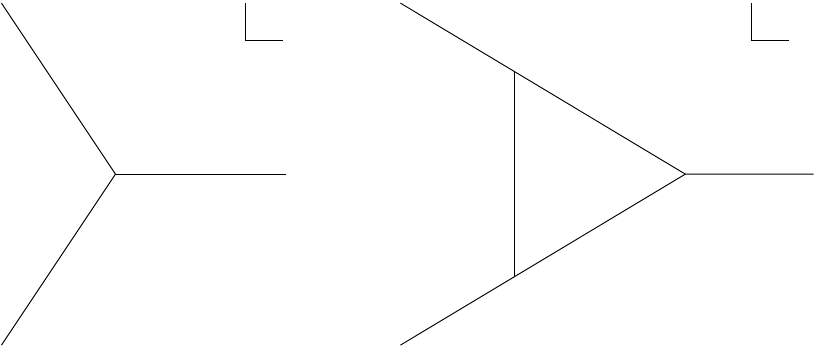_t}}
\caption{Toric web diagrams and weights at the fixed points for
$\cX$ and $Y$.} \label{fig:td}
\end{figure}

Let $p=\pi^*c_1(\cO_{\bbP(n,1,1)}(1))\in H_T(K_{\bbP(n,1,1)})$,
where $\pi:K_{\bbP(n,1,1)} \to \bbP(n,1,1)$ is the bundle projection
and the torus action on $\cO_{\bbP(n,1,1)}(1)$ is linearized
canonically by identifying $\bbC^4$ with the tautological bundle
$\cO_{\bbP(n,1,1)}(-1)$. Via the Atiyah--Bott isomorphism we have:
\eq{ p = -\frac{\a_1}{n} P_1-\a_2 P_2 +(\a_1+\a_2) P_3 \in
H^2_T(K_{\bbP(n,1,1)}). }
The products $w_i$ of the three normal (tangent) weights at  the
fixed points $P_i$ read
\ea{
w_1 = & -\frac{n+2}{n}\a_1\l(\a_2-\frac{\a_1}{n}\r)\l(\a_1+\a_2+\frac{\a_1}{n}\r),\nn\\
w_2 = & -(n+2)\a_2(\a_1-n \a_2)(\a_1+2\a_2), \nn \\
w_3 = & -(n+2)(\a_1+\a_2)(\a_1+n (\a_1+\a_2))(\a_1 + 2\a_2). }
As a module over $H(BT)$, the equivariant Chen--Ruan cohomology ring
of $Y=K_{\bbP(n,1,1)}$ is spanned by $\{\mathbf{1}_Y, p, p^2,
\mathbf{1}_{\frac{1}{n}}, \dots, \mathbf{1}_{\frac{n-1}{n}}\}$. On
$\cX$, we have cohomology classes $\mathbf{1}_g$, labeled by the
corresponding group elements $g=1,\re^{2\pi\ri/{n+2}}, \dots,
\re^{2\pi\ri (n+1)/(n+2)}$; the involution on the inertia stack
exchanges $\mathbf{1}_{\frac{k}{n+2}}\leftrightarrow
\mathbf{1}_{1-\frac{k}{n+2}}$.

\subsection{Quantum geometry}

Genus-zero Gromov--Witten invariants of $\cX$ and $Y$ can be
computed using the quantum Riemann--Roch theorems of
Coates--Givental \cite{MR2276766} and Tseng \cite{MR2578300} applied
to the Gromov--Witten theories of $B\bbZ_{n+2}$ and $\bbP(n,1,1)$,
respectively. We have the following

\begin{prop}[\cite{MR2276766,MR2578300,MR2510741}]
For $|y|<n^n(n+2)^{-2-n}$,$|x|<(n+2)n^{-n/(n+2)}$, define the {\rm
$I$-functions} \ea{ \label{eq:IY} I^Y(y,z)\triangleq & z y^{p/z}
\sum_{nd \in \bbZ^+} y^d \frac{\prod_{\stackrel{\bra m \ket =
      \bra (n+2) d\ket }{0\leq m <(n+2) d-1}} (-(n+2)p-m z)}{\prod_{\stackrel{\bra m \ket =
      \bra d \ket }{0<m \leq d}} (p+\a_2+mz)(p-\a_1-\a_2+m
  z)}\frac{1}{\prod_{m=1}^{nd}(np+\a_1+m z)}, \\
I^\cX(x,z) \triangleq  & \sum_{k\geq 0} \frac{\prod_{\stackrel{\bra
b \ket = \bra k/(n+2) \ket}{0\leq
      b<\frac{k}{n+2}}}(\frac{\alpha_2}{n+2}-b z)(-\frac{\alpha_1+\alpha_2}{n+2}-b z)\prod_{\stackrel{\bra b \ket = \bra kn/(n+2)\ket}{0\leq
      b<\frac{kn}{n+2}}}(\frac{n \alpha_1}{n+2}-b z)}{z^k}\frac{x^k}{k!}\mathbf{1}_{\bra k/{n+2}\ket}.
\label{eq:IX} } Then, for $\cW$ either $\cX$ or $Y$ and $w$ either
$x$ or $y$, $I^\cW(w,-z)\in -z+H_T(\cW) \otimes \bbC[[z^{-1}]]\cap
\LL_\cW$ identically in $w$. \label{prop:Ifun}
\end{prop}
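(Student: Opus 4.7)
The plan is to derive both $I$-functions as hypergeometric modifications of simpler building blocks, via the quantum Riemann--Roch machinery of Coates--Givental \cite{MR2276766} in the smooth case and its orbifold extension by Tseng \cite{MR2578300}, combined with the orbifold toric mirror theorem of \cite{MR2510741}. In both cases the output (lying on the Lagrangian cone with the stated $z$-expansion) is built into the conclusion of these theorems, so my task is to assemble the correct input data and match the resulting hypergeometric factors with \eqref{eq:IY}--\eqref{eq:IX}.

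For the resolution $Y$, I view $Y=\mathrm{tot}(\cO_{\bbP(n,1,1)}(-n-2))$ as a concave line bundle over the weighted projective plane and invoke \cite{MR2510741} to record the small $I$-function of $\bbP(n,1,1)$: under the Atiyah--Bott identification of $p$ with the hyperplane class, this reproduces exactly the prefactor $zy^{p/z}$ together with the three denominator products in \eqref{eq:IY} corresponding to the three toric divisors. I then apply Coates--Givental quantum Lefschetz for the $T$-equivariant twist by $\cO(-n-2)$, with the CY-weight of the fiber specialized to $0$; this inserts in the numerator precisely the product $\prod_{\bra m\ket=\bra(n+2)d\ket,\,0\leq m<(n+2)d-1}\bigl(-(n+2)p-mz\bigr)$. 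Convergence on $|y|<n^n(n+2)^{-n-2}$ is the elementary convergence radius of the underlying $_3F_2$-type series in the Mori variable $y$.

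For the orbifold side $\cX=[\bbC^3/\bbZ_{n+2}]$, I start from the trivial small $J$-function of $B\bbZ_{n+2}$, whose nontrivial part is the age-graded series $\sum_k (x^k/k!)\mathbf{1}_{\bra k/(n+2)\ket}$, and regard $\cX$ as the total space of a rank-three bundle over $B\bbZ_{n+2}$ associated to the characters $(n,1,1)$ with equivariant parameters $(\alpha_1,\alpha_2,-\alpha_1-\alpha_2)$. Applying Tseng's orbifold quantum Riemann--Roch to each of the three twists produces, on the sector indexed by $k$, a product over $b$ with $0\leq b<km_j^1/(n+2)$ and $\bra b\ket=\bra km_j^1/(n+2)\ket$ of the factors $(m_j\lambda_j/(n+2)-bz)$; summing over $j$ and using the CY relation yields precisely \eqref{eq:IX}.

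The main obstacle is the orbifold bookkeeping rather than any conceptual difficulty: I need to verify that the fractional-part conditions $\bra m\ket=\bra d\ket$, $\bra m\ket=\bra(n+2)d\ket$, and $\bra b\ket=\bra k/(n+2)\ket$ appearing in \eqref{eq:IY}--\eqref{eq:IX} are the correct ones dictated by the stabilizer groups along the torus-invariant lines of $Y$ (the gerby $\bbP(1,n)$-legs $L_2, L_3$) and at the fixed point $B\bbZ_{n+2}$ of $\cX$, and that the age shifts of the twisted sectors in the Chen--Ruan cohomologies of $Y$ and $\cX$ line up with the pole structure of the hypergeometric factors. Once this combinatorial matching is carried out, the Lagrangian cone statement $I^{\cW}(w,-z)\in -z+H_T(\cW)\otimes \bbC[[z^{-1}]]\cap \LL_\cW$ is immediate from the cited mirror theorems.
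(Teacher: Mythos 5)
Your argument is correct and is essentially the same as the paper's: the paper's proof is a one-line citation to Theorems~3.5 and~3.7 of Coates--Corti--Iritani--Tseng, and those theorems are established precisely by the route you describe (quantum Lefschetz for the concave twist of the $\bbP(n,1,1)$ $I$-function on the $Y$ side, and Tseng's orbifold quantum Riemann--Roch applied to $J_{B\bbZ_{n+2}}$ with characters $(n,1,1)$ and CY-specialized equivariant weights on the $\cX$ side). The only caveat is that the combinatorial matching of fractional-part conditions and age shifts, which you flag as the remaining bookkeeping, is exactly what the cited theorems package for you, so nothing further is needed.
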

\begin{proof}
This is \cite[Theorem~3.5 and 3.7]{MR2486673}.
\hfill $\Box$ \end{proof}

Since the $I$-functions of $\cX$ and $Y$ belong to the cone and
behave like $z+\cO(1)$ at large $z$, they coincide with suitable
restrictions of the respective big $J$-functions to a subfamily of
quantum cohomology parameters.
\begin{cor}
Denote by $q$ the Novikov variable associated to $p$ and write
$\phi=\sum_{k=0}^{n+1}
\tau_{\frac{k}{n+2}}\mathbf{1}_{\frac{k}{n+2}}$ for an orbifold
cohomology class $\phi\in H_T^{\rm orb}(\cX)$. Then the following
equalities hold: \ea{
J_{\rm small}^Y(q,z) = & I^Y(y(q),z), \\
J_{\rm
  big}^\cX(\phi,z)\big|_{\tau_{k/(n+2)}=  \delta_{k1}\tau}
= & I^Y(x(\tau),z), }
where $\log q = \lim_{z\to\infty}(I^Y(y,z)-z), \tau =
\lim_{z\to\infty}(I^\cX(x,z)-z)$. In particular,
\eq{ \mathfrak{h}_Y = \mathfrak{h}_\cX = 1. }
\end{cor}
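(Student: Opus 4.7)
The corollary follows from \cref{prop:Ifun} together with Givental's characterization of the big $J$-function as the unique family of sections of the Lagrangian cone asymptotic to $z+\tau+O(z^{-1})$, with $\tau$ parametrizing the insertion point.

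The first step is to read off the mirror maps from the $z\to\infty$ expansions of the $I$-functions. By \cref{prop:Ifun}, both $I^Y(y,z)$ and $I^\cX(x,z)$ lie on the respective Lagrangian cones and admit expansions of the form
\eq{
I^\cW(w,z)=z+\tau^\cW(w)+O(z^{-1}),
}
with $\tau^\cW(w)\triangleq\lim_{z\to\infty}(I^\cW(w,z)-z)\in H_T^\bullet(\cW)$. For $Y$, expanding the $d=0$ summand $zy^{p/z}$ in powers of $z^{-1}$ produces $\tau^Y(y)=p\log y+O(y)$, so the scalar identification $p \log q = \lim_{z\to\infty}(I^Y(y,z)-z)$ yields an invertible analytic mirror map between punctured neighborhoods of $y=0$ and $q=0$. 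For $\cX$, only the direction $\mathbf{1}_{1/(n+2)}$ is populated at the leading nontrivial order in $x$, so $\tau^\cX(x)=\tau(x)\mathbf{1}_{1/(n+2)}$ with $\tau(x)=x+O(x^2)$, and the change of variables $x\leftrightarrow\tau$ is again invertible.

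The second step is a direct application of Givental's uniqueness: since $I^\cW(w,z)$ is a family of sections of $\LL_\cW$ of the required asymptotic form, it must coincide with $J^\cW(\tau^\cW(w),z)$ after the mirror identification. For $Y$, the $I$-function depends on the Novikov variable alone and $\tau^Y$ lies in $H^2_T(Y)$, so the matching is with the small $J$-function $J_{\rm small}^Y(q(y),z)=I^Y(y,z)$. For $\cX$, since $\tau^\cX$ lies on the line $\bbC\cdot\mathbf{1}_{1/(n+2)}$, the matching is with the big $J$-function restricted to the one-parameter subfamily $\phi=\tau\,\mathbf{1}_{1/(n+2)}$, giving $J_{\rm big}^\cX(\phi,z)\big|_{\tau_{k/(n+2)}=\delta_{k1}\tau}=I^\cX(x(\tau),z)$.

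The concluding claim $\mathfrak{h}_Y=\mathfrak{h}_\cX=1$ follows by inspection. Per the convention of~\cite[App.~A.1.1]{Brini:2013zsa}, $\mathfrak{h}_\cW^{1/z}$ is the scalar prefactor appearing when the untwisted-sector leading asymptote of the $I$-function takes the form $z\,\mathfrak{h}_\cW(\tau)^{1/z}\mathbf{1}_\cW+\tau+O(z^{-1})$, and it quantifies a multiplicative correction to the disk potential of purely classical (open mirror map) origin. Since the untwisted-sector leading asymptote of both $I^Y$ and $I^\cX$ is exactly $z\mathbf{1}_\cW$ without any multiplicative rescaling, we conclude $\mathfrak{h}_\cW\equiv 1$. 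The main subtlety in the argument is conceptual rather than computational: it lies in matching the convention for $\mathfrak{h}_\cW$ from~\cite{Brini:2013zsa} with the $z\to\infty$ behavior of the hypergeometric $I$-functions; once this identification is made, the vanishing of the correction is immediate from the structure of the $d=0$ (resp.\ $k=0$) summand.
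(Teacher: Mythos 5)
Your argument is correct and is essentially the paper's own (the corollary is justified there only by the remark that the $I$-functions lie on the Lagrangian cones and have the asymptotics $z+\tau+\cO(z^{-1})$, so Givental's uniqueness identifies them with the stated restrictions of the $J$-functions under the mirror maps $\tau^\cW=\lim_{z\to\infty}(I^\cW-z)$). Your reading of $\mathfrak{h}_\cW=1$ from the absence of a multiplicative prefactor on the leading $z\,\mathbf{1}_\cW$ term likewise matches the intended justification, and you correctly interpret the paper's typo $I^Y(x(\tau),z)$ as $I^\cX(x(\tau),z)$.
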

\subsubsection{Analytic continuation and $\U$}
A standard method \cite{MR2529944,MR2510741} to relate the
Lagrangian cones of $\cX$ and $Y$ upon analytic continuation hinges
on the following three-step procedure: \ben
\item[\rm (1)] find a holonomic linear differential system of rank equal to $\dim
  H^\bullet(Y)=\dim H^\bullet_{\rm orb}(\cX)$ jointly satisfied, upon appropriate identification of the
quantum parameters, by the components of the $I$-functions of $\cX$
and $Y$ as convergent power series around the respective boundary
point;
\item[\rm (2)] determine the relation between the $I$-functions upon analytic continuation along a path
  $\rho$ connecting the two boundary points;
\item[\rm (3)] invoke a reconstruction theorem to recover from the latter the content of big quantum
cohomology and the full-descendent theory in genus zero
\cite{ccit2,Dubrovin:1994hc}. \een Step (3) has been achieved in
full generality for toric Deligne--Mumford stacks in \cite{ccit2}.
The first step is also standard \cite{MR1672116}; we spell out the
details below for the sake of completeness. The main intricacy here
lies in Step~(2), as the rank of the system is parametrically large
in $n$ and the usual Mellin--Barnes method
\cite{MR2486673,MR2700280} is technically more subtle to apply; we
present a workaround in the discussion leading to Lemma \ref{lem:kum}.

\begin{lem}
\label{lem:pf} Let $\DD_Y$ the $(n+2)^{\rm th}$ order linear
differential operator
\eq{ \label{eq:DY} \DD_Y\triangleq
(\theta_y+\a_2)(\theta_y-\a_1-\a_2) \prod_{m=0}^{n}(n\theta_y+\a_1
-mz)- y \prod_{m=0}^{n+1}(-(n+2) \theta_y-m z) } where
$\theta_y=zy\de_{y}$ and define $\DD_\cX$ to be the differential
operator obtained by replacing $y=x^{-n-2}$ in Eq. \cref{eq:DY}.
Then, \eq{ \DD_\bullet I^\bullet = 0 }
\end{lem}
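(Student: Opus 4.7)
The plan is to verify $\DD_\bullet I^\bullet=0$ by a direct computation of the three-term recursion satisfied by the coefficients of each hypergeometric series. This is the standard route to establishing GKZ-type Picard--Fuchs equations for toric $I$-functions, and in the present setting both $I^Y$ and $I^\cX$ are expected to be annihilated by the same operator: namely the one built from the charge vector $(n,1,1,-n-2)$ of the GIT problem, expressed in the K\"ahler/quantum variable appropriate to each chamber.

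For $Y$, I would write $I^Y(y,z)=zy^{p/z}\sum_d C_d^Y\, y^d$ with $C_d^Y$ read off from \cref{eq:IY}. Since $\theta_y(y^{d+p/z})=(p+zd)\,y^{d+p/z}$, the polynomial factor of $\DD_Y$ acts on the $d$th summand by multiplication by
\[
P(d)\triangleq (p+zd+\alpha_2)(p+zd-\alpha_1-\alpha_2)\prod_{m=0}^{n}\bigl(n(p+zd)+\alpha_1-mz\bigr),
\]
while the shift term $y\prod_{m=0}^{n+1}(-(n+2)\theta_y-mz)$ maps the $(d-1)$st summand to a scalar multiple of the $d$th summand with scalar
\[
Q(d)\triangleq\prod_{m=0}^{n+1}\bigl(-(n+2)(p+z(d-1))-mz\bigr).
\]
Thus $\DD_Y I^Y=0$ is equivalent to the recursion $P(d)\,C_d^Y=Q(d)\,C_{d-1}^Y$, which I would check by direct inspection of \cref{eq:IY}: as $d\mapsto d-1$, the numerator of $C_d^Y$ loses exactly the linear factors selected by the constraint $\bra m\ket=\bra(n+2)d\ket$, and each of the three denominator products in \cref{eq:IY} loses the corresponding factors; collecting these precisely reconstitutes $Q(d)/P(d)$.

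For $\cX$, the substitution $y=x^{-n-2}$ gives $\theta_y=-\tfrac{z}{n+2}\theta_x$, so $\DD_\cX$ splits as $A(\theta_x,z)+x^{-n-2}B(\theta_x,z)$ with $B(\theta_x,z)=\prod_{m=0}^{n+1}(z\theta_x-mz)$. Because $B(k,z)$ annihilates $x^k$ for $k=0,\dots,n+1$, applying $\DD_\cX$ to $I^\cX$ produces no negative powers of $x$. The operator $\DD_\cX$ acts scalarly within each twisted sector, so matching the coefficient of $x^k\mathbf{1}_{\bra k/(n+2)\ket}$ turns $\DD_\cX I^\cX=0$ into the recursion $A(k,z)\,C_k^\cX+B(k+n+2,z)\,C_{k+n+2}^\cX=0$, where $C_k^\cX$ is the scalar coefficient of $\mathbf{1}_{\bra k/(n+2)\ket}$ in $I^\cX$. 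This is verified analogously from \cref{eq:IX}: under $k\mapsto k+n+2$ each of the three hypergeometric products gains exactly the set of new linear factors matching $-A(k,z)/B(k+n+2,z)$.

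The only real subtlety is the bookkeeping of the fractional-part conditions (the constraints $\bra m\ket=\bra(n+2)d\ket$, $\bra m\ket=\bra d\ket$ and $\bra b\ket=\bra kn/(n+2)\ket$) across the shifts $d\mapsto d-1$ and $k\mapsto k+n+2$: these dictate which linear factors enter or leave the hypergeometric products at each residue class, and must be tracked consistently at the boundary $k\equiv 0,-1\bmod (n+2)$ on the $\cX$-side and at rational values of $d$ on the $Y$-side. Once this bookkeeping is handled, both recursions reduce to an elementary polynomial identity, and one could alternatively appeal to the general GKZ framework of \cite{ccit2} to avoid the combinatorial check entirely.
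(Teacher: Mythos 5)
Your proposal is correct and is exactly the ``straightforward calculation'' the paper invokes: the paper's proof of \cref{lem:pf} consists of the single sentence that the statement follows by direct computation from \cref{eq:IX,eq:IY}, and your coefficient-by-coefficient recursion (diagonal action of the polynomial part via $\theta_y\, y^{d+p/z}=(p+zd)\,y^{d+p/z}$, index shift from the $y$-multiplication term, and matching of the ratio $C_d/C_{d-1}$ against $Q(d)/P(d)$) is precisely that computation spelled out. No discrepancy to report.
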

\begin{proof}
The statement follows from a straightforward calculation from Eqs.
\cref{eq:IY} and \cref{eq:IX}.
\hfill $\Box$ \end{proof}

The linear operator $\DD_\cW$ is the Picard--Fuchs operator of
$\cW=\cX,Y$: Lemma \ref{lem:pf} establishes that the torus-localized
components of the $I$-functions of $\cX$ and $Y$ furnish two bases
solutions of the linear system $\DD_\cW f=0$, respectively in the
neighbourhood of the Fuchsian points $y=0$ and $\infty$.  Relating
the cones of $\cX$ and $Y$ thus boils down to
finding the change-of-basis matrix connecting the two set of
solutions upon
analytic continuation from one boundary point to the other. 
Let $I^\cX_{k}(x,z)$ denote the coefficient of
$\mathbf{1}_{k/(n+2)}$ in Eq. \cref{eq:IX}, and define in the same
vein
\ea{
I_k^Y(y,z)= & \mathrm{Coeff}_{\mathbf{1}_{P_{k+1}}}I^Y(y,z), & & k=0,1,2, \\
I_{\frac{j}{n}}^Y(y,z)= &
\mathrm{Coeff}_{\mathbf{1}_{\frac{j}{n}}}I^Y(y,z), & &
j=1,\dots,n-1. }
It is immediately noticed that
$I_k^\cX(x,z)=x^k(z^{1-k}/k!+\cO(x^{n+2}))$: this uniquely
characterizes $\{I_k^\cX\}_{k=0}^{n+1}$ as a basis of solutions of
$\DD_\cX f=0$. On the other hand, localizing Eq. \cref{eq:IY} to the
$T$-fixed points and resumming in $d$ for
$|y|<\frac{n^n}{(n+2)^{n+2}}$ we obtain
\ea{ \label{eq:Ihypun} I^Y_k = & i^*_{P_k}\l[ z y^{p/z} \,
_{n+3}F_{n+2}\left(\{A_n\}; \{B_n\};
  (-n-2)^{n+2}n^{-n} y\right)\r], \\
I^Y_{\frac{j}{n}} = & \frac{z^{1-j} y^{j/n}}{j!} \,
_{n+2}F_{n+1}\left(\{C_{n,j}\}; \{D_{n,j}\}; (-n-2)^{n+2}n^{-n}
y\right), \label{eq:Ihyptw} }
where
\ea{ A_n= & \l(1, \frac{1}{n+2}+\frac{p}{z},\dots,
\frac{n+1}{n+2}+\frac{p}{z},
\frac{p}{z}\r), \nn \\
B_n = & \l(\frac{1}{n}+\frac{np+\a_1}{n z}, \dots,
\frac{n-1}{n}+\frac{np+\a_1}{n z}, 1+\frac{np+\a_1}{n z},
1+\frac{p-\alpha
  _1-\a_2}{z},1+\frac{p+\a_2}{z}\r), \nn \\
C_{n,j}= & \l(1,\frac{1}{n+2}-\frac{j}{n},\dots,
\frac{n+1}{n+2}-\frac{j}{n}\r), D_{n,j} =  \l(\frac{j}{n},
\frac{j+1}{n}, \dots, \frac{j+n-1}{n}, 1+\frac{j}{n}\r),
\label{eq:ABCD} }
and $_{p}F_{q}\left(\{\cA\}; \{\cB\}; y\right)$ denotes the
generalized hypergeometric series
\eq{ _{p}F_{q}\left(\{\cA\}; \{\cB\}; w\right) \triangleq
\frac{\prod_{i=1}^q \Gamma(\cB_i)}{\prod_{j=1}^p \Gamma(\cA_j)}
  \sum_{n=0}^{\infty} \frac{\prod_{i=1}^p \Gamma(\cA_i+n)}{\prod_{j=1}^q \Gamma(\cB_j+n)}\frac{w^n}{n!},
\label{eq:pFq} }
which is convergent for $|w|<1$. \\

In order to continue to $x=y^{-n-2}\ll 1$ we will need the following analytic
continuation theorem for  $_{p}F_{q}\left(\{\cA\}; \{\cB\};
y\right)$, which generalizes the
 classical Kummer continuation formula at infinity for the Gauss function.
\begin{lem}
\label{lem:kum} Let $p=q+1$, $\cB_j\notin \bbN$, $\cA_i-\cA_j \notin
\bbZ$ for $i\neq j$  and let $\rho:\bbR \to \bbC$ be a path in the
complex $y$-plane from $y=0$ to $y=\infty$ having trivial winding
number around both $y=0$ and $y=1$. Then the analytic continuation
of Eq. \cref{eq:pFq} to $|y| \gg 1$ along $\rho$ satisfies \eq{ \,
_{q+1}F_{q}\left(\{\cA\}; \{\cB\}; y\right) \sim
\sum_{k=1}^{q+1}\prod_{j=1}^q
\frac{\Gamma(\cB_j)}{\Gamma(\cB_j-\cA_k)}\prod_{j\neq
k}\frac{\Gamma(\cA_j-\cA_k)}{\Gamma(\cA_j)}(-y)^{-\cA_k}\l(1+\cO\l(\frac{1}{y}\r)\r).
\label{eq:kum} }
\end{lem}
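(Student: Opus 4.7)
The plan is to establish the formula by means of the Mellin--Barnes integral representation of $_{q+1}F_q$, which is the classical method for the analytic continuation of generalized hypergeometric series across their unit circle of convergence.

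First I would write
\[
\,_{q+1}F_q(\{\cA\};\{\cB\};y) \;=\; \frac{\prod_{i=1}^q \Gamma(\cB_i)}{\prod_{j=1}^{q+1}\Gamma(\cA_j)}\cdot \frac{1}{2\pi\ri}\int_C \frac{\prod_{j=1}^{q+1}\Gamma(\cA_j+s)\,\Gamma(-s)}{\prod_{i=1}^q\Gamma(\cB_i+s)}(-y)^s\,\di s,
\]
where $C$ is a vertical line $\R(s)=\sigma$ with $-\min_j\R(\cA_j)<\sigma<0$, so that it separates the poles of $\Gamma(-s)$ at $s\in\bbN$ from those of $\prod_j\Gamma(\cA_j+s)$ at $s=-\cA_k-n$. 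Stirling's formula, which in the balanced case $p=q+1$ yields exponential decay of the integrand in $|\I(s)|$ throughout the sector $|\arg(-y)|<\pi$, justifies the representation: closing $C$ to the right recovers the power series \cref{eq:pFq} from the residues of $\Gamma(-s)$ for $|y|<1$.

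For $|y|\gg 1$ I would instead deform $C$ to the far left. Under the hypotheses $\cA_i-\cA_j\notin\bbZ$ and $\cB_j\notin\bbN$, all poles encountered are simple, located at $s=-\cA_k-n$ with $k=1,\dots,q+1$ and $n\in\bbN$, with no cancellations from the denominator gammas. A direct residue computation gives
\[
\Res_{s=-\cA_k}\frac{\prod_j\Gamma(\cA_j+s)\,\Gamma(-s)}{\prod_i\Gamma(\cB_i+s)}(-y)^s \;=\; \frac{\Gamma(\cA_k)\prod_{j\neq k}\Gamma(\cA_j-\cA_k)}{\prod_i \Gamma(\cB_i-\cA_k)}(-y)^{-\cA_k},
\]
which, after multiplication by the prefactor, reproduces the $k$-th summand of \cref{eq:kum} term by term. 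The residues at $s=-\cA_k-n$ for $n\geq 1$ contribute subleading powers $(-y)^{-\cA_k-n}$ and collectively generate the $1+\cO(1/y)$ correction within each of the $q+1$ asymptotic towers.

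The delicate point -- and the main obstacle of the argument -- is specifying the branch of $(-y)^{-\cA_k}$ uniformly along $\rho$. The Mellin--Barnes integrand is single-valued on $\bbC\setminus[0,\infty)$, but $_{q+1}F_q(y)$ has nontrivial monodromy around the regular singular points $y=0$ and $y=1$ of its defining hypergeometric ODE. The hypothesis that $\rho$ has trivial winding number around both points is precisely what keeps us on the principal sheet throughout the deformation, so that no monodromy correction intervenes in the residue sum. Once the branch is tracked and the vanishing of the deformed tail contour (at $\R(s)\to-\infty$) is verified by the standard parameter-counting bound, the statement follows at once from the residue theorem.
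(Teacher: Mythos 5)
Your argument is correct, but it is not the route the paper takes. You prove the connection formula by the classical Mellin--Barnes method: represent $_{q+1}F_q$ as a Barnes integral with kernel $\prod_j\Gamma(\cA_j+s)\Gamma(-s)/\prod_i\Gamma(\cB_i+s)\,(-y)^s$, close the contour to the right to recover the series for $|y|<1$, and shift it to the left for $|y|\gg1$, reading off all $q+1$ leading coefficients at once from the simple poles at $s=-\cA_k$ (your residue computation and the cancellation of $\Gamma(\cA_k)$ against the prefactor are correct, and your hypotheses guarantee simplicity of the poles; note that a would-be cancellation of a pole by a zero of $1/\Gamma(\cB_i+s)$ is harmlessly consistent with \cref{eq:kum}, since the corresponding coefficient then vanishes). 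The paper instead argues via the generalized hypergeometric ODE \cref{eq:hypeq}: the indicial analysis at $w=\infty$ fixes the shape $\sum_j c_j(-w)^{-\cA_j}$ of the expansion, a single coefficient $c_k$ (the dominant one) is extracted as $\lim_{w\to\infty}(-w)^{\cA_k}\widetilde\Phi(w)$ using the Euler--Pochhammer multiple-integral representation and the Beta integral \cref{eq:EulerBeta}, and the remaining coefficients are obtained by the $S_{q+1}$-symmetry of \cref{eq:pFq} in the $\cA_i$ together with analytic continuation in the parameters. Your approach buys a one-shot derivation of all coefficients and a transparent control of the branch of $(-y)^{-\cA_k}$ (the integral is single-valued on $|\arg(-y)|<\pi$, which is where the winding hypothesis on $\rho$ enters); the price is the justification of the contour shift, i.e.\ the decay estimate in the balanced case $p=q+1$, which you only invoke by name and should at least cite (Barnes/Slater/Luke). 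The paper's approach trades that estimate for the ODE-theoretic input and a parameter-continuation step, which is the ``workaround'' it advertises for avoiding Mellin--Barnes manipulations when the rank grows with $n$. Either proof is acceptable here.
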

\begin{proof}
The argument follows almost verbatim the steps leading to the
well-known result for $q=1$. $\Phi(w) \triangleq
_{q+1}F_{q}\left(\{\cA\}; \{\cB\}; w\right)$ satisfies the
generalized hypergeometric equation
\eq{ \l[\theta \prod_{j=1}^q(\theta+\cB_j-1) - w
\prod_{j=1}^q(\theta+\cA_j)\r] \Phi (w)=0. \label{eq:hypeq} }
with $\theta=w\de_w$. The same analysis at $w=\infty$ as for the
Gauss equation reveals that $\cA_i$ are local exponents of Eq.
\cref{eq:hypeq},
%
%
%
\eq{ \widetilde \Phi(w) \sim \sum_{j=1}^{q+1} c_j\left(\{\cA\};
\{\cB\}\right) (-w)^{-\cA_j} \label{eq:phipsi} }
for some $c_j\left(\{\cA\}; \{\cB\}\right) \in \bbC$. Let now $k$ be
such that $\Re(\cA_k-\cA_j)<0$ for all $j\neq k$; then
\eq{
 c_k\left(\{\cA\}; \{\cB\}\right)= \lim_{w\to\infty} (-w)^{\cA_k} \widetilde \Phi(w)
}
Now, $\Phi(w)$ can be represented as the multiple Euler--Pochhammer
integral \cite{MR0474684}
\eq{ \Phi_j(w)= \prod_{i=1}^q
\frac{\Gamma(\cB_i)}{\Gamma(\cA_i)\Gamma(\cB_i-\cA_i)}\frac{1}{(1-\re^{2\pi\ri
  \cA_i})(1-\re^{2\pi\ri
  (\cB_i-\cA_i)})}\int_\gamma\dots
\int_\gamma\frac{t_i^{\cA_i}(1-t_i)^{\cB_i-\cA_i}}{(1-w\prod_i
t_i)}\prod_{i=1}^q \frac{\rd t_i}{t_i(1-t_i)}, }
where $\gamma=[C_0,C_1]$ is the commutator of simple loops around
$t=0$ and $t=1$. Taking the limit $w\to \infty$ along $\rho$ and
using the Euler Beta integral,
\eq{ \frac{1}{(1-\re^{2\pi\ri  \cA_i})(1-\re^{2\pi\ri
(\cB_i-\cA_i)})}
\int_\gamma t_i^{\cA_i-1}(1-t_i)^{\cB_i-\cA_i-1}\prod_{i=1}^q \rd t_i 
= \frac{\Gamma(\cA_i)\Gamma(\cB_i-\cA_i)}{\Gamma(\cB_i)},
\label{eq:EulerBeta} }
gives
\eq{ c_k(\cA,\cB)=\prod_{i=1}^q
\frac{\Gamma(\cB_i)}{\Gamma(\cB_i-\cA_k)}\prod_{i\neq
k}\frac{\Gamma(\cA_i-\cA_k)}{\Gamma(\cA_i)}. } from which Eq.
\cref{eq:kum} follows by the invariance of Eq. \cref{eq:pFq} under
permutation of $\cA_i$ and analytic continuation to
$\Re(\cA_j-\cA_i)<0$, $j\neq k \neq i$.
\hfill $\Box$ \end{proof}

Denote by $\widetilde{I^Y}(y,z)$ the analytic continuation of
$I^Y(y,z)$ along  $\rho$ as in Lemma \ref{lem:kum}. The matrix
expression of the symplectomorphism $\U:\HH_\cX \to \HH_Y$ of
Conjecture \ref{conj:iri} in the bases
$\{\mathbf{1}_{\frac{k}{n+2}}\}_{k=0,1,\ldots, n-1}$ for
$H_T^\bullet(\cX)$ and $\{P_1,\mathbf{1}_{\frac{1}{n}} , \ldots,
\mathbf{1}_{\frac{n-1}{n}},P_2, P_3\}$ for $H_T^\bullet(Y)$ can then
be read off upon applying  Eq. \cref{eq:kum} to Eqs.
\cref{eq:Ihypun}--\cref{eq:ABCD},
\eq{ \widetilde{I}_i^Y(x^{-n-2},z) = \sum_{k=0}^{n+1}(\U)_{ik}
I_{\frac{k}{n+2}}^\cX(x,z). }
%
\begin{example}[$n=1$]{\rm
We have, from Eq. \cref{eq:IY} and Eq. \cref{eq:kum} for $q=2$,
\ea{ (\U)_{0,0}= & \frac{\Gamma \left(\frac{1}{3}\right) \Gamma
\left(\frac{2}{3}\right) 27^{\frac{\alpha _2}{z}} \Gamma
\left(\frac{z+\alpha _1-\alpha _2}{z}\right) \Gamma
   \left(\frac{z-\alpha _2+\alpha _3}{z}\right)}{\Gamma \left(\frac{z+\alpha _1}{z}\right) \Gamma \left(\frac{1}{3}-\frac{\alpha _2}{z}\right) \Gamma
   \left(\frac{2}{3}-\frac{\alpha _2}{z}\right) \Gamma \left(\frac{z+\alpha
     _3}{z}\right)}, \nn \\
(\U)_{0,\frac{1}{3}} = & \frac{z \Gamma \left(-\frac{1}{3}\right)
\Gamma \left(\frac{1}{3}\right) 3^{\frac{3 \alpha _2}{z}-1} \Gamma
\left(\frac{z+\alpha _1-\alpha _2}{z}\right) \Gamma
   \left(\frac{z-\alpha _2+\alpha _3}{z}\right)}{\Gamma \left(\frac{\alpha _1}{z}+\frac{2}{3}\right) \Gamma \left(-\frac{\alpha _2}{z}\right) \Gamma
   \left(\frac{2}{3}-\frac{\alpha _2}{z}\right) \Gamma \left(\frac{\alpha
     _3}{z}+\frac{2}{3}\right)}, \nn \\
(\U)_{0,\frac{2}{3}}= & \frac{2 z^2 \Gamma \left(-\frac{2}{3}\right)
\Gamma \left(-\frac{1}{3}\right) 3^{\frac{3 \alpha _2}{z}-2} \Gamma
\left(\frac{z+\alpha _1-\alpha _2}{z}\right)
   \Gamma \left(\frac{z-\alpha _2+\alpha _3}{z}\right)}{\Gamma \left(\frac{\alpha _1}{z}+\frac{1}{3}\right) \Gamma \left(-\frac{\alpha _2}{z}\right) \Gamma
   \left(\frac{1}{3}-\frac{\alpha _2}{z}\right) \Gamma \left(\frac{\alpha _3}{z}+\frac{1}{3}\right)},
}
where $\a_3=-\a_1-\a_2$, and
$(\U)_{ik}(\a_{(1,2,3)})=(\U)_{0k}(\a_{\chi^{i}(1,2,3)})$, where
$\chi\in S_3$ is the cyclic permutation $1\to 2$, $2\to 3$, $3 \to
1$.}
\end{example}

\begin{rmk}[On general toric wall-crossings]{\rm
The arguments we used for the examples of this Section have a wider
applicability to wall-crossings in toric Gromov--Witten theory,
including the multi-parameter case. On general grounds,
$I$-functions - and their extended versions \cite{ccit2} - are
multiple hypergeometric functions of Horn type
\cite{MR2700280,MR2553377}. When crossing a {\it single} wall in the
$B$-model moduli space, however, the analytic continuation is
effectively taking place in {\it one} parameter only. Restricting to
the sublocus where all the spectator variables are set to zero
reduces the multiple Horn series to a single-variable series which,
upon manipulations of Gamma factors in the summand as in the next
section, can always be cast in the form of a generalized
hypergeometric function $_p F_q(\{\cA\},\{\cB\},w)$ with $q\geq
p-1$. Whenever the series has a finite radius of convergence
 as in the Calabi--Yau case, we have  $p=q+1$, for which
Lemma \ref{lem:kum} applies. The general case is obtained similarly.}
\end{rmk}

\subsubsection{Grade restriction window and the $K$-theoretic CRC}
Let us now turn to Conjecture \ref{conj:iri} for this family of
geometries. Throughout this section, we work with the natural basis
$\{\mathbf{1}_{\frac{k}{n+2}}\}_{k=0,1,\ldots, n-1}$ for
$H_T^\bullet(\cX)$ and with the localized basis
$\{P_1,\mathbf{1}_{\frac{1}{n}} , \ldots,
\mathbf{1}_{\frac{n-1}{n}},P_2, P_3\}$ for   $H_T^\bullet(Y)$. The
grade restriction window  $\mathfrak{W}=\{L_j\}_{j=0,\ldots,n+1}$,
where $L_j$ is a $\bbC^\ast$ equivariant line bundle on $\bbC^4$
with character $\chi_j$ given by
\eq{ \chi_j=
\begin{cases}
j & j<1+\frac{n}{2}, \\
j-n-2 & \mathrm{else},
\end{cases}
}
yields a natural bijection between the $K$-lattices of $\cX$ and
$Y$.  We make the notational convention of taking all indexing  sets
to range from $0$ to $n+1$, with the sole purpose of leaving the
coefficients corresponding to identities/trivial objects in the
first row/column of any matrix we write. With these choices the
matrices representing the (homogenized, involution pulled-back)
Chern characters for $\cX$ and $Y$ are \ea{ \label{eq:chX}
[{\overline{\rm CH}_{\cX}}]_j^k = & {\left(\frac{2\pi
    \rm{i}}{z}\right)}^{\frac{1}{2}\deg }\mathrm{inv}^\ast \mathrm{CH}_\cX=
{\rm{e}}^{-jk \frac{2\pi \rm{i}}{n+2}}, \\
[{\overline{\rm CH}_{Y}}]_j^l= & \left\{
\begin{array}{cl}
{\rm{e}}^{\frac{2\pi \rm{i}}{n} \chi_j\left(l- \frac{\a_1}{z}\right)} & \mbox{for $l= 0, \ldots ,n-1$.}\\
{\rm{e}}^{-2\pi \rm{i} \frac{\chi_j\a_2}{z}} & \mbox{for $l= n$.}\\
{\rm{e}}^{-2\pi \rm{i} \frac{\chi_j\a_3}{z}}  & \mbox{for $l= n+1$.}
\end{array}
\right. \label{eq:chY} }

\begin{thm}
Conjecture \ref{conj:iri} holds with the restriction window
$\mathfrak{W}$ above and the analytic continuation path $\rho$ as in
Lemma \ref{lem:kum}. \label{thm:crcproj}
\end{thm}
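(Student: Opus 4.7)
The plan is to verify the matrix equality in \cref{conj:iri} by direct
computation of both sides in the prescribed bases and comparing
sector by sector. On the left hand side, $\bbU$ is by definition the matrix
relating the analytic continuation along $\rho$ of the
$I$-function components $\{\widetilde I^Y_i\}$ to the
Chen--Ruan components $\{I^\cX_{k/(n+2)}\}$; on the right hand side, the
Chern character matrices are already given explicitly in \cref{eq:chX,eq:chY},
so the content of the statement is that the residual Gamma factors
assemble correctly into $\overline\Gamma_Y \circ \overline\Gamma_\cX^{-1}$.

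The first step is to turn the symplectomorphism $\bbU$ into a closed-form
Gamma expression. Starting from the hypergeometric representation
\cref{eq:Ihypun,eq:Ihyptw} of $I^Y_i(y,z)$, I would apply \cref{lem:kum}
with the parameters $(\cA,\cB)$ read off from \cref{eq:ABCD}, substituting
$y = x^{-n-2}$ after continuation. Each exponent $\cA_k$ selected by the
Kummer expansion produces a power $(-y)^{-\cA_k} = x^{(n+2)\cA_k}$ whose
leading behaviour matches, term by term, the normalisation of exactly one
$I^\cX_{k/(n+2)}(x,z)$; extracting the coefficient gives
$[\bbU]_{ik}$ as a product of four Gamma quotients of the Kummer shape
$\prod_j \Gamma(\cB_j)/\Gamma(\cB_j-\cA_k) \cdot \prod_{j\neq k}
\Gamma(\cA_j-\cA_k)/\Gamma(\cA_j)$, multiplied by a monomial phase in
$\re^{2\pi\ri/(n+2)}$ and $\re^{2\pi\ri/n}$ arising from the rotation
$(-y)^{-\cA_k}$ and from the $y^{p/z}$ prefactor localised at $P_1, P_2, P_3$.

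The second step is to factor the resulting expression into the shape
predicted by \cref{eq:iri}. Here I would use the Gauss multiplication
theorem $\prod_{k=0}^{N-1}\Gamma(z+k/N) = (2\pi)^{(N-1)/2} N^{1/2-Nz}
\Gamma(Nz)$ to collapse the $1/(n+2)$-shifted and $1/n$-shifted ladders
appearing in $\cA$ and $\cB$ into single Gamma functions evaluated at
$(n+2)\cdot$ and $n\cdot$ the relevant equivariant argument. After this
collapse, the Gamma factors depending only on equivariant weights recombine
into the diagonal entries of $\overline\Gamma_\cX$ (at the
untwisted/twisted sectors of $[\bbC^3/\bbZ_{n+2}]$) and of $\overline\Gamma_Y$
(at the three fixed points and at each gerby component above $P_1$). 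The
remaining roots-of-unity phases, which depend both on the sector index $k$ on
the $\cX$ side and on the character $\chi_j$ dictated by $\mathfrak{W}$,
reproduce the ratio $[\overline{\rm CH}_Y]_j^l/[\overline{\rm CH}_\cX]_j^k$
read from \cref{eq:chX,eq:chY}. The reflection identity
$\Gamma(z)\Gamma(1-z) = \pi/\sin(\pi z)$ will be used to convert between
the Gamma quotients produced by Kummer and the $\sin$ factors implicit in
$\overline\Gamma_\cW$.

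The main obstacle is the careful bookkeeping of the $\chi_j$ choice in the
grade restriction window, since the formula for $\chi_j$ bifurcates at
$j = 1+n/2$, and this bifurcation must match the branch of
$(-y)^{-\cA_k}$ selected by the continuation path $\rho$; an incorrect
choice of determination would shift each matrix entry by a root of unity.
I would handle this by first treating the fixed-point components
($l=0,n,n+1$) where $\cA_k$ is linear in $\a_i/z$, then the gerby sector
($l=1,\dots,n-1$) where the ladder is controlled by the exponents
$j/n - k/(n+2)$; the $S_3$ symmetry among $P_1, P_2, P_3$ noted after
\cref{eq:Ihyptw} reduces the number of independent identities one needs to
verify to a single representative in each of the two sector families.
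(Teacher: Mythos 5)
Your proposal is correct and follows essentially the same route as the paper: compute $\U$ from \cref{lem:kum} applied to \cref{eq:Ihypun,eq:Ihyptw,eq:ABCD}, then telescope the Gamma factors via the Gauss multiplication formula and Euler's reflection identity until only trigonometric factors and roots-of-unity phases remain, and match these against the Chern character matrices \cref{eq:chX,eq:chY}. The only (immaterial) difference is organizational — the paper conjugates first, setting $\bbV=\Gamma_Y^{-1}\U\Gamma_\cX$, and then verifies $\bbV\cdot\overline{\rm CH}_\cX=\overline{\rm CH}_Y$ by summing over roots of unity, which avoids explicitly inverting $\overline{\rm CH}_\cX$ as your sketch implicitly requires.
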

\begin{proof}
Consider the linear map $\bbV:\HH_\cX \to \HH_Y$ defined by \eq{
\bbV = \Gamma_Y^{-1} \U \Gamma_\cX, \label{eq:V} } in the bases
above for $H_T^\bullet(\cX)$ and $H_T^\bullet(Y)$. The Gamma factors
in Eqs. \cref{eq:kum} and \cref{eq:V} telescope away by virtue of
Eq. \cref{eq:ABCD}, the multiplication formula
\eq{ \Gamma (b+m z)=(2 \pi )^{\frac{1-m}{2}} m^{b+m z-\frac{1}{2}}
\prod _{k=0}^{m-1} \Gamma \left(\frac{b+k}{m}+z\right)\text{;} \ \
m\in \mathbb{Z}\land m>0, }
and Euler's identity, $\Gamma(x)\Gamma(1-x)=\pi/\sin(\pi x)$; the
final result is a trigonometric matrix with coefficients
$[\bbV]_j^i$ being
 Laurent polynomials in
$\re^{2\pi\ri \a_k}$, $k=1,2,3$. Right-multiplication by the Chern
character matrix of $\cX$ and telescoping the resulting sums over
roots of unity returns ${\overline{\rm CH}_{Y}}$, as given in Eq.
\cref{eq:chY}.
\hfill $\Box$ \end{proof}

\subsection{The OCRC}


As discussed in Section \ref{sec:dcrc}, the first implication we
draw from Theorem \ref{thm:crcproj} is a comparison theorem for
winding neutral disk potentials.

\begin{cor}
Proposals \ref{theocrc}  and \ref{pr:bbo} hold for
$Y=K_{\bbP(n,1,1)}$ and $\cX=[\bbC^3/\bbZ_{n+2}]$.
\label{cor:crcproj}
\end{cor}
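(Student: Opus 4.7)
The plan is to deduce both Proposal \ref{theocrc} and Proposal \ref{pr:bbo} as formal consequences of Theorem \ref{thm:crcproj} (the $K$-theoretic CRC for this family) combined with the mirror theorem of Proposition \ref{prop:Ifun}; the crucial observation enabling the deduction is a factorization of the disk tensor through the Gamma class.

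First, by Proposition \ref{prop:Ifun} and its Corollary, the $I$-functions $I^\cW$ coincide with the restrictions of the big $J$-function of $\cW$ to the sub-family of quantum-cohomology parameters on which the disk potentials are defined, with normalization constants $\mathfrak{h}_\cW = 1$. Hence the winding-neutral disk potential reduces on this locus to $\mathbb{F}^{\rm disk}_{L,\cW} = \overline{\DD}_\cW^+ \circ I^\cW$, and by Theorem \ref{thm:crcproj} we have $\widetilde{I^Y}(y(x),z) = \U \cdot I^\cX(x,z)$ with $\U = \overline{\Gamma}_Y \circ \overline{\rm CH}_Y \circ \overline{\rm CH}_\cX^{-1} \circ \overline{\Gamma}_\cX^{-1}$ in the bases fixed by the grade restriction window $\mathfrak{W}$.

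Next, at each toric-fixed point $p$ adjacent to the Lagrangian, a direct comparison of \eqref{eq:dr} and \eqref{Theta} gives the pointwise factorization
\[
\overline{\DD}_{\cW,p}^+ = \frac{\pi}{w_1 |G_p|}\, \Theta_\cW \circ \overline{\Gamma}_\cW^{-1},
\]
in which both $\Theta_\cW$ and $\overline{\Gamma}_\cW^{-1}$ are diagonal in the localized basis concentrated at $p$. Chaining with the expression for $\U$ above and cancelling $\overline{\Gamma}_Y^{-1}\overline{\Gamma}_Y$ formally yields
\[
\overline{\DD}_Y^+ \circ \U = \Theta_Y \circ \overline{\rm CH}_Y \circ \overline{\rm CH}_\cX^{-1} \circ \Theta_\cX^{-1} \circ \overline{\DD}_\cX^+ = \bbO \circ \overline{\DD}_\cX^+,
\]
with $\bbO$ of precisely the form predicted in \eqref{eq:bbo}. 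Contracting both sides with $I^\cX$ and invoking the first step then produces the winding-neutral disk potential identity of Proposal \ref{theocrc} (with $\mathfrak{h}_\cW = 1$) and simultaneously confirms the predicted form of $\bbO$ in Proposal \ref{pr:bbo}.

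The main technical obstacle is the global bookkeeping of torus-fixed data: on $\cX$ the single $\bbZ_{n+2}$-fixed origin is replaced on $Y$ by three fixed points of differing isotropy type (see \cref{fig:td}), the local group orders $|G_p|$ entering the disk-tensor factorization differ between the two phases, and the sum over fixed points adjacent to $L$ must reorganize consistently with the $K$-lattice identification prescribed by $\mathfrak{W}$. This reorganization is exactly the combinatorial content encoded in the Chern character matrices \eqref{eq:chX}--\eqref{eq:chY} and the telescoping of roots of unity in the proof of Theorem \ref{thm:crcproj}, so no new analytic input is required beyond a careful tracking of these factors.
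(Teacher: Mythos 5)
Your argument is correct and is essentially the paper's: \cref{cor:crcproj} is drawn as an immediate consequence of \cref{thm:crcproj}, with the implication ``CIT CRC with $\U$ of the form \eqref{eq:iri} $\Rightarrow$ OCRC with $\bbO$ of the form \eqref{eq:bbo}'' supplied by exactly the factorization $\overline{\DD}^+_{\cW,p}\propto \Theta_\cW\circ\overline{\Gamma}^{-1}_\cW$ that you write down (this is the general mechanism of \cite{Brini:2013zsa}, which the paper cites rather than re-proves). One small correction: the matching of the scalar prefactors $\pi/(w_1|G_p|)$ across the wall is guaranteed by the invariance of $n_{\rm eff}w_1$ along the leg carrying the Lagrangian, not by the Chern-character telescoping, though this does not affect the validity of your argument.
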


This can be employed to obtain more concrete identifications of
scalar disk potentials, as we now show.

\subsubsection{Scalar disk potentials: non-special legs}

  In the case where the Lagrangian on $Y$ is on a leg that attached to a
 non-stacky point, the equality of scalar disk potentials follows in a simple
 fashion for all $n$.
 When the Lagrangian is on the leg that attached to the
 stacky point, we need to consider separately the case $n$-odd, where the
 quotient on the leg is effective, and $n$-even, where there is a residual
 $\bbZ_2$ isotropy. \\

We consider non-special legs first. We have the following

\begin{thm}
Consider a Lagrangian boundary condition $L$ on $\cX$ which
intersects the second coordinate axis, and denote by $L'$ the proper
transform in $Y$. Then, upon identifying the insertion variables via
the change of variable prescribed by the closed CRC, we have the
equality of scalar disk potentials:
\eq{ F_{L',Y}^{\rm disk}(\tau,
y,\vec{w}) =  F_{L,\X}^{\rm
  disk}(\tau,
y, \vec{w}). } \label{thm:nonspecleg}
\end{thm}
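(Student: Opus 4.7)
The plan is to derive \cref{thm:nonspecleg} as a direct corollary of the disk CRC (\cref{cor:crcproj}), by invoking \cref{sdp}: the key simplifying fact is that we have already established $\mathfrak{h}_\cX = \mathfrak{h}_Y = 1$ for this family of targets, so the change of variables in \cref{sdp} is trivial, and the scalar potentials should match on the nose in the sense of absolute values.

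The first substantive step is to verify that the pair $(L, L')$ in the statement genuinely fits the hypothesis of \cref{sdp}. On the orbifold side, the axis $V(x_1,x_3) \subset [\bbC^3/\bbZ_{n+2}]$ has trivial generic stabilizer, because the $\bbZ_{n+2}$-weight on $x_2$ is $1$ and is coprime to $n+2$; in particular $n_{\rm eff}=1$ along this leg. On the resolution side, the proper transform lands on $L_1=V(x_1,x_4) \simeq \bbP^1$ (see \cref{eq:L1pn11}), which is likewise a non-stacky invariant line. Hence $G_L=G_{L'}=\{e\}$, and the winding-neutral disk tensors on both sides are supported on the untwisted sectors at the relevant fixed points $P_2, P_3$ (and their $Y$-counterparts).

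Next I would unpack the scalar potential from the comparison of winding-neutral potentials. By definition, $F_\cW^{\rm disk}(\tau,y,\vec w) = \sum_d \frac{y^d}{d!}\left|\left(\overline\DD_\cW^+, J^\cW(\tau, w_1/d)\right)_\cW\right|$ picks out the residues at $z=w_1/d$ of the sine denominator of $\overline\DD_\cW^+$ in \cref{eq:dr} and pairs the result with the $J$-function. Since $\cref{cor:crcproj}$ already gives $\mathbb{F}_{L',Y}^{\rm disk} = \bbO \circ \mathbb{F}_{L,\cX}^{\rm disk}$ and $\bbO$ is $\bbC((z^{-1}))$-linear, the specialization to $z=w_1/d$ and the extraction of the $\mathbf{1}_{p,0}$-coefficient commute with $\bbO$ once we have checked that the Gamma/sine factors introduced by $\Theta_\cW$ in \cref{Theta,eq:bbo} evaluate to the ones absorbed into the scalar disk invariants. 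This is the same telescoping of Gamma factors that appeared in the proof of \cref{thm:crcproj}, only restricted to the one-dimensional sector at the relevant fixed points; since the age shifts are zero on the untwisted sector, the combination $\Theta_Y \overline{\rm CH}_Y \overline{\rm CH}_\cX^{-1} \Theta_\cX^{-1}$ reduces to an overall phase after using $\Gamma(x)\Gamma(1-x)=\pi/\sin(\pi x)$ and Euler's multiplication formula.

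The main obstacle is ensuring that the specialization to $z=w_1/d$ is compatible with the analytic continuation provided by \cref{lem:kum}: one must confirm that the residues of the two sides at this pole genuinely coincide as functions of $\tau$ on an open set of quantum parameters, rather than just formally. I would handle this by directly inspecting the hypergeometric expressions \cref{eq:Ihypun,eq:Ihyptw}: at the non-special leg $P_2$ (resp.\ $P_3$), the $I^Y_k$ evaluated at $z=w_1/d$ produces a $d$-truncation of the hypergeometric series, and the continuation formula \cref{eq:kum} intertwines it with the $d$-truncation of $I^\cX$ restricted to untwisted and $\mathbf{1}_{k/(n+2)}$ sectors. Any residual roots of unity arising from this matching are killed by the absolute value convention, consistent with the footnote in the statement of \cref{sdp}, yielding the claimed identity.
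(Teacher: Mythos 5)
Your overall strategy --- deduce the scalar statement by specializing the winding-neutral identity of \cref{cor:crcproj} at the winding poles and checking that the factors coming from $\Theta_\cW$ and the Chern characters are harmless --- is the same as the paper's. But the execution contains errors that derail the bookkeeping. First, $n_{\rm eff}$ for the second coordinate axis of $[\bbC^3/\bbZ_{n+2}]$ is $n+2$, not $1$: the generic stabilizer of the leg is indeed trivial, but $n_{\rm eff}$ measures the effective part of the $\bbZ_{n+2}$-action \emph{along} the axis (the weight on $x_2$ is $1$, so the whole group rotates the leg effectively). Consequently the specialization is $z=(n+2)\a_2/d$, the compatibility condition \cref{compat} reads $k\equiv d \pmod{n+2}$, and the winding-$d$ disk invariants of $\cX$ sit in the coefficient of $\mathbf{1}_{\langle d/(n+2)\rangle}$ --- a different twisted sector for each $d$ --- not in the untwisted sector. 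Your claim that the disk tensors are ``supported on the untwisted sectors'' is therefore false, and the extraction of a ``$\mathbf{1}_{p,0}$-coefficient'' is not what the scalar potential computes. (Also, the proper transform of the second axis is the non-compact fiber $V(x_1,x_3)$ of $K_{\bbP(n,1,1)}$ over $P_2$, not the compact line $L_1=V(x_1,x_4)$; the $Y$-side disk tensor is supported at $P_2$ alone, not at $P_2$ and $P_3$.)

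Second, and more importantly, the substantive step of the proof is missing. The assertion that $\Theta_Y\circ\overline{\rm CH}_Y\circ\overline{\rm CH}_\cX^{-1}\circ\Theta_\cX^{-1}$ ``reduces to an overall phase'' is not correct sector-by-sector and cannot be: the winding-neutral potential of $\cX$ is a vector with $n+2$ components, only one of which carries winding-$d$ invariants, so the specialized map $\bbO_d$ must act as a \emph{projector} onto that component, not as a phase. What the paper actually does is compute $\bbO^k_{d,n}$ explicitly: it is $\frac{1}{n+2}\sum_{j=0}^{n+1}{\rm e}^{\frac{2\pi{\rm i}j}{n+2}(k-d)}$ times a ratio of sines. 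The root-of-unity sum vanishes unless $k\equiv d\pmod{n+2}$, and in that case the sine ratio is $\pm1$; this selection rule is exactly what matches the unique contributing sector on $\cX$ with the $P_2$-coefficient on $Y$, winding by winding, and it is the content of the proof. Your final paragraph's concern about compatibility of the $z$-specialization with the analytic continuation is, by contrast, a non-issue: \cref{cor:crcproj} is already an identity of analytic functions of $(\tau,z)$, so evaluating at $z=(n+2)\a_2/d$ is legitimate without revisiting \cref{lem:kum}.
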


\begin{proof}
In this case the tensors $\Theta$ from (\ref{Theta})  are: \eq{
\left[\Theta_\cX^{-1}\right]^{kk} = \sin\left(\pi\left(
\frac{-\a_1}{z} + \left\langle\frac{nk}{n+2} \right\rangle\right)
\right), } \eq{ \left[\Theta_Y\right]_{ll} =
\frac{1}{\sin\left(\pi\left( \frac{n\a_2-\a_1}{z}\right) \right)}
\delta_{l,n}. } We compute the transformation $\bbO$ as in Eq.
\cref{eq:bbo}; note it has nonzero coefficients only for $l=n$. We
then specialize $z=\frac{(n+2)\a_2}{d}$ to obtain a map we denote
$\bbO_d$, \eq{ \bbO^k_{d,n} = \frac{\sin\left(\pi\left( -\frac{\a_1
d}{(n+2) \a_2} + \left\langle\frac{nk}{n+2} \right\rangle\right)
\right)}{\sin\left(\pi\left(
 -\frac{\a_1 d}{(n+2) \a_2} + \frac{n d}{(n+2)}
\right) \right)}
 \frac{1}{n+2}
 {\rm e}^{\frac{2\pi {\rm i} j}{n+2}\left(
 k -d
 \right)}.
 \label{eq:od1}}
The expression in Eq. \cref{eq:od1} is summed over the index $j$
ranging from $0$ to $n+1$. When $k$ is not congruent to $d$ modulo
$n+2$, the exponential part is a sum of roots of unity that adds to
$0$. When $k\equiv d$ modulo $n+2$, $\bbO^k_{d,n}=\pm1$. Hence our
OCRC, Corollary \ref{cor:crcproj}, together with Eq. \cref{eq:od1}
gives \eq{
\pm \bbF_{L,\X|{z=\frac{(n+2)\a_2}{d}}}^{\rm
 disk}(\mathbf{1}_{\left\langle\frac{d}{n+2}\right\rangle})=
\bbF_{L',Y|{z=\frac{(n+2)\a_2}{d}}}^{\rm
 disk}(P_2).
}
Disk invariants of winding $d$  for $\cX$ are  the coefficients of
the classes $\mathbf{1}^{\frac{k}{n+2}}$ with   $k\equiv d$ modulo
$n+2$ after specializing $z=\frac{(n+2)\a_2}{d}$ in
$\bbF_{L,\X}^{\rm
 disk}$. 
Summing over all $d$, we obtain the equality of scalar potentials as
stated in Theorem \ref{thm:nonspecleg}.
\hfill $\Box$ \end{proof}

\subsubsection{Scalar disk potentials for the special leg: $n$ odd}
\label{sec:nodd}
\begin{thm}
Let $n$ be an odd integer. Consider a Lagrangian boundary condition
$L$ on $\cX$ which intersects the first coordinate axis, and denote
by $L'$ the proper transform in $Y$. Then, upon identifying the
insertion variables via the change of variable prescribed by the
closed CRC, we have the equality of scalar disk potentials:
\eq{ F_{L',Y}^{\rm disk}(\tau,
y,\vec{w}) =  F_{L,\X}^{\rm
  disk}(\tau,
y, \vec{w}). } \label{thm:speclegodd}
\end{thm}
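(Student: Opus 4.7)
The plan is to follow the strategy of \cref{thm:nonspecleg}, now specialized to the first coordinate axis. For $n$ odd the simplifying feature is that $\gcd(n,n+2)=1$, so the generic stabilizer along the first axis in $\cX$ is trivial and the proper transform $L' = L_1 \simeq \bbP^1$ in $Y$ is non-stacky, with both adjacent fixed points $P_2$ and $P_3$ non-gerby (cf.\ the discussion following \cref{eq:P1pn11}). This keeps the bookkeeping on both sides essentially as streamlined as in the non-special leg case.

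The first step is to write down the relevant $\Theta$ tensors of \cref{Theta} at the origin on $\cX$ (with Lagrangian on the first axis) and at the fixed point on $L_1 \subset Y$ adjacent to $L'$, reading off the tangent weights from \cref{fig:td}. On the $\cX$-side one has $n_{\rm eff} = n+2$ and $w_1 = \alpha_1/(n+2)$; the compatibility condition \cref{compat} reads $d \equiv -2k \pmod{n+2}$. Since $n+2$ is odd, $-2$ is invertible modulo $n+2$, so for each residue class of $d$ there is a unique twisting $k \in \{0, \ldots, n+1\}$ contributing to the winding-$d$ disk invariants -- the exact analogue of the key numerical fact exploited in \cref{thm:nonspecleg}.

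The second step is to assemble $\bbO$ via \cref{eq:bbo} and specialize $z = n_{\rm eff} w_1 / d$ as dictated by the winding neutral disk potential. Combining the $\Theta$ tensors with the Chern character matrices \cref{eq:chX,eq:chY}, the entries $[\bbO_d]^k_l$ factor as a sine ratio times a geometric sum of the form $\frac{1}{n+2}\sum_{j=0}^{n+1} \re^{2\pi\ri \chi_j (k-d)/(n+2)}$, which vanishes unless $k \equiv d \pmod{n+2}$ and equals $\pm 1$ otherwise. Summing over $d$ and taking absolute values as in the definition of the scalar disk potentials then matches the winding-$d$ disk invariants of $\cX$ with their counterparts on $Y$, yielding the desired equality.

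The main obstacle lies in controlling the sine prefactors coming from the $\Theta$ tensors: at the fixed points adjacent to $L_1$ the tangent weights in \cref{fig:td} mix $\alpha_1$ and $\alpha_2$ in a less symmetric manner than on the non-special legs, and one must verify that after the specialization of $z$ the leftover sine ratio collapses to a sign. The oddness of $n$ enters crucially through the invertibility of $-2$ modulo $n+2$ in the compatibility condition; when $n$ is even, a residual $\bbZ_2$ stabilizer along the first axis reintroduces additional twistings and is what necessitates the separate treatment that follows this theorem.
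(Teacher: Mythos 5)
There is a genuine gap, and it starts with the geometry. The proper transform of the first coordinate axis of $\cX=[\bbC^3/\bbZ_{n+2}]$ is \emph{not} the base curve $L_1=V(x_1,x_4)\simeq\bbP^1$ joining $P_2$ and $P_3$: under the wall-crossing map the locus $\{x_2=x_3=0\}$ becomes the fiber of $K_{\bbP(n,1,1)}$ over $P_1=V(x_2,x_3,x_4)\simeq B\bbZ_n$. So the fixed point adjacent to $L'$ is the gerby point $P_1$, and even though for $n$ odd the leg itself carries no generic isotropy, the disk tensor on the $Y$ side is supported on the twisted sectors $\mathbf{1}_{l/n}$, $l=0,\dots,n-1$, with its own compatibility condition $d\equiv 2l \pmod n$. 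Your proposal, by placing $L'$ on $L_1$ with non-gerby endpoints, erases this entire structure and reduces the argument to a rerun of \cref{thm:nonspecleg}; the resulting $\bbO_d$ (a single-index sum $\frac{1}{n+2}\sum_j \re^{2\pi\ri \chi_j(k-d)/(n+2)}$ vanishing unless $k\equiv d\pmod{n+2}$) is not the correct transformation for this leg.

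What the proof actually requires is a two-index matching. One has the $\cX$-compatibility $d\equiv kn\pmod{n+2}$ (equivalently $d\equiv -2k$, where oddness of $n$ gives uniqueness of $k$, as you note) \emph{and} the $Y$-compatibility $d\equiv 2l\pmod n$; since $n$ odd makes $n$ and $n+2$ coprime, the Chinese Remainder Theorem combines these into $d\equiv kn+l(n+2)\pmod{n(n+2)}$ with a unique solution pair $(\bar k,\bar l)$ for each $d$. The roots-of-unity factor in $\bbO_d$ is correspondingly $\frac{1}{n+2}\sum_{j}\re^{2\pi\ri j(kn+l(n+2)-d)/(n(n+2))}$, which collapses the sine ratio to $\pm1$ on the CRT-compatible pair and kills the entries where only the $Y$-compatibility holds --- precisely the cancellation needed so that the winding-$d$ coefficient of $\mathbf{1}_{\bar k/(n+2)}$ on $\cX$ matches that of $\mathbf{1}_{\bar l/n}$ on $Y$. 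None of this is visible in your outline. (A smaller point: the specialization is $z=(n+2)\a_1/d$, coming from the tangent weight $\a_1$ at the origin of $\cX$ along the first axis with $n_{\rm eff}=n+2$; your stated $w_1=\a_1/(n+2)$ would give the wrong locus in $z$.)
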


\begin{proof}
In this case the tensors $\Theta$ from (\ref{Theta}) are: \eq{
\left[\Theta_\cX^{-1}\right]^{kk} = \sin\left(\pi\left(
\frac{\a_1+\a_2}{z} + \left\langle\frac{k}{n+2} \right\rangle\right)
\right), } \eq{ \left[\Theta_Y\right]_{ll} =
\frac{1}{\sin\left(\pi\left( \frac{\a_1+\a_2}{z}+ \frac{\a_1}{nz}+
\left\langle -\frac{l}{n} \right\rangle\right)
 \right)}.
} We compute the transformation $\bbO$ as in Eq. \cref{eq:bbo}. We
then specialize $z=\frac{(n+2)\a_1}{d}$ to obtain $\bbO_d$. \eq{
\bbO^k_{d,l} =
 \frac{
 \sin\left(\pi\left( \frac{d(\a_1+\a_2)}{(n+2)\a_1} + \left\langle\frac{k}{n+2} \right\rangle\right) \right)
} {\sin\left(\pi\left( \frac{d(\a_1+\a_2)}{(n+2)\a_1}+
\frac{d}{n(n+2)}+ \left\langle -\frac{l}{n} \right\rangle\right)
 \right)}
 \frac{1}{n+2}
 {\rm e}^{\frac{2\pi {\rm i} j}{n(n+2)}\left(
 kn +l(n+2) -d
 \right)}.
 \label{eq:od2}}
The expression in Eq. \cref{eq:od2} is summed over the index $j$
ranging from $0$ to $n+1$.  The degree-twisting compatibilities are:
\begin{itemize}
    \item[$\X$:] $ d\equiv kn \ \ \ \mbox{mod}\ \  n+2 ,$
    \item[$Y$:] $ d\equiv 2l \ \ \ \mbox{mod} \ \ n.$
\end{itemize}
The Chinese remainder theorem then states that both compatibilities
are satisfied when
\eq{ d\equiv kn +l(n+2) \ \ \ \mbox{mod} \ \  n(n+2). \label{crt} }
When (\ref{crt}) is satisfied,  the difference in the arguments in
the sine functions is an integer multiple of $\pi$, hence
$\bbO^k_{d,l}=\pm1$. When only the compatibility for $Y$ is
satisfied, then the exponential part of Eq. \cref{eq:od2} consists
of a sum of $(n+2)$ roots of unity that add to $0$. All other
entries of the matrix representing $\bbO_d$ do not matter for our
purposes. For a fixed $d$, there is a unique pair
$(\bar{k},\bar{l})$ satisfying both  twisting conditions, and Eq.
\cref{eq:od2} gives: \eq{\label{eq:sdpc}
\bbF_{L,\X|{z=\frac{(n+2)\a_1}{d}}}^{\rm
 disk}(\mathbf{1}_{\frac{\bar{k}}{n+2}})=
\pm \bbF_{L',Y|{z=\frac{(n+2)\a_1}{d}}}^{\rm
 disk}(\mathbf{1}_{\frac{\bar{l}}{n}}).
}
Disk invariants of winding $d$  for $\cX$ are  the coefficients of
the class $\mathbf{1}^{\frac{\bar{k}}{n+2}}$  after specializing
$z=\frac{(n+2)\a_1}{d}$ in  $\bbF_{L,\X}^{\rm
 disk}$, whereas for $Y$ they are obtained as the coefficients of the class $\mathbf{1}^{\frac{\bar{l}}{n}}$ after the same specialization of $z$ in  $\bbF_{L,Y}^{\rm
 disk}$. Hence,
summing over all $d$, Eq. \cref{eq:sdpc} yields the equality of
scalar potentials as stated in Theorem \ref{thm:speclegodd}.
\hfill $\Box$ \end{proof}

\subsubsection{Scalar disk potentials for the special leg: $n$ even}
\begin{thm}
Let $n$ be an even integer. Consider a Lagrangian boundary condition
$L$ on $\cX$ which intersects the first coordinate axis, and denote
by $L'$ the proper transform in $Y$. Then, upon identifying the
insertion variables via the change of variable prescribed by the
closed CRC, we have the equality of scalar disk potentials:
\eq{ F_{L',Y}^{\rm disk}(\tau,
y,\vec{w}) =  F_{L,\X}^{\rm
  disk}(\tau,
y, \vec{w}). } \label{thm:speclege}
\end{thm}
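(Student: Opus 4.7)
The strategy is to adapt the proof of \cref{thm:speclegodd} to accommodate the residual $\bbZ_2$ stabilizer that appears both on the first coordinate axis of $\cX$ and on the fiber over $P_1$ in $Y$ when $n$ is even. The overall template remains: write down the $\Theta$ tensors from \cref{Theta}, compute $\bbO$ via \cref{eq:bbo}, specialize $z$ at the value dictated by ${n_{\rm eff}}\, w_1/d$, identify the twist compatibility conditions on each side, and telescope the resulting exponential sums over roots of unity.

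First I would write down the $\Theta$ tensors. On $\cX$, since $\gcd(n,n+2)=2$, only those twisted sectors $\mathbf{1}_{k/(n+2)}$ for which $k$ has the correct residue modulo $2$ contribute to the disk tensor, reflecting the $\bbZ_2 \subset \bbZ_{n+2}$ ineffective part of the action along the first axis. Correspondingly on $Y$, only sectors $\mathbf{1}_{l/n}$ of appropriate parity couple to the disk tensor at $P_1$. The effective sizes of the action are ${n_{\rm eff}}^\cX=(n+2)/2$ and ${n_{\rm eff}}^Y=n/2$, and the $\Theta$ tensors feature fractional-part arguments governed by these halved moduli in lieu of those appearing in the odd-$n$ case.

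Next I would compute $\bbO_d$ by specializing $z=(n+2)\alpha_1/(2d)$, so that the specialization is consistent with the common effective winding $d/{n_{\rm eff}}$ on both sides. The resulting expression for $\bbO^k_{d,l}$ has the same structural form as \cref{eq:od2}, but with the moduli $n+2$ and $n$ replaced throughout by $(n+2)/2$ and $n/2$. The key observation is that, writing $n=2m$, one has $\gcd\!\big((n+2)/2,\,n/2\big)=\gcd(m+1,m)=1$, so the Chinese Remainder Theorem reapplies exactly as in \cref{thm:speclegodd}: for each valid winding $d$ there exists a unique pair $(\bar k,\bar l)$ modulo $n(n+2)/4$ solving both twisting conditions simultaneously, for which one verifies $\bbO^{\bar k}_{d,\bar l}=\pm 1$, while the remaining contributions are sums of $(n+2)/2$-th roots of unity that vanish identically.

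The main obstacle I anticipate is the bookkeeping of the parity constraints on $d$ imposed independently by the $\bbZ_2$ ineffective factors on each of $\cX$ and $Y$: one must verify that the set of windings producing non-trivial disk invariants on $\cX$ coincides precisely with that on $Y$, and that the half-integer shifts arising from the ineffective $\bbZ_2$ cancel symmetrically between the two sides of \cref{eq:bbo}. Once this parity compatibility is checked, the argument closes by summing the analogue of \cref{eq:sdpc} over all admissible $d$, exactly as in the odd-$n$ proof.
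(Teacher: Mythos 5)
Your overall template (compute $\Theta$, form $\bbO$, specialize $z$, match twisting conditions, telescope roots of unity) is the right one, and your specialization $z=\tfrac{(n+2)\a_1}{2d}$ agrees with the paper. But there are two genuine problems with the way you handle the residual $\bbZ_2$.

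First, the paper does \emph{not} modify the tensors $\Theta$ or the moduli in $\bbO$: the transformation $\bbO$ is taken verbatim from the odd case (\cref{sec:nodd}), and the ineffectiveness of the $\bbZ_2$ enters \emph{only} through the specialization of $z$. Your proposal to restrict the contributing sectors by a parity condition and to replace the moduli $n+2$ and $n$ by $(n+2)/2$ and $n/2$ inside $\bbO$ changes the object being computed; the exponential sum in \cref{eq:od3} still runs over $j=0,\dots,n+1$ with denominator $n(n+2)$, and its vanishing/non-vanishing is governed by the congruence $kn+l(n+2)\equiv 2d \bmod n(n+2)$, not by a halved modulus.

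Second, and more importantly, your claim that the Chinese Remainder Theorem with the coprime moduli $(n+2)/2$ and $n/2$ produces a \emph{unique} contributing pair $(\bar k,\bar l)$ is false. The compatibilities $2d\equiv kn \bmod (n+2)$ and $2d\equiv 2l \bmod n$ each have \emph{two} solutions in the relevant ranges (namely $k_0$ and $k_1=k_0+\tfrac{n+2}{2}$, and $l_0$ and $l_1=l_0+\tfrac{n}{2}$), so four pairs solve the system mod $\tfrac{n(n+2)}{2}$, and \emph{both} sectors on each side genuinely contribute to the winding-$d$ disk invariant. The content of the proof is that $\bbO_d$ restricted to this $2\times 2$ block is a signed permutation matrix: exactly the two pairs satisfying the finer congruence $2d\equiv kn+l(n+2)\bmod n(n+2)$ give $\bbO^{k}_{d,l}=\pm1$, while the cross terms $\bbO^{k_0}_{d,l_1}$ and $\bbO^{k_1}_{d,l_0}$ vanish because the corresponding exponential sums are sums of roots of unity adding to zero. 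This yields \emph{two} equalities of winding-neutral potentials per winding $d$, whose sum reproduces the scalar potentials. Your halved-modulus CRT cannot distinguish $k_0$ from $k_1$ (nor $l_0$ from $l_1$), so it can neither establish the vanishing of the cross terms nor account for the fact that the scalar potential on each side is a sum over two sectors rather than one.
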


\begin{proof}
The transformation $\bbO$ in this case is the same as in
Section \ref{sec:nodd}. However we specialize  to $z=\frac{(n+2)\a_1}{2d}$
to obtain $\bbO_d$: \eq{ \bbO^k_{d,l} =
 \frac{
 \sin\left(\pi\left( \frac{2d(\a_1+\a_2)}{(n+2)\a_1} + \left\langle\frac{k}{n+2} \right\rangle\right) \right)
} {\sin\left(\pi\left( \frac{2d(\a_1+\a_2)}{(n+2)\a_1}+
\frac{2d}{n(n+2)}+ \left\langle -\frac{l}{n} \right\rangle\right)
 \right)}
 \frac{1}{n+2}
 {\rm e}^{\frac{2\pi {\rm i} j}{n(n+2)}\left(
 kn +l(n+2) -2d
 \right)}.
 \label{eq:od3}}
The expression in Eq. \cref{eq:od3} is summed over the index $j$
ranging from $0$ to $n+1$.  The degree-twisting compatibilities are:
\begin{itemize}
    \item[$\X$:] $ 2d\equiv kn \ \ \ \mbox{mod}\ \  n+2 ,$
    \item[$Y$:] $ 2d\equiv 2l \ \ \ \mbox{mod} \ \ n.$
\end{itemize}
Modular arithmetic again tells us that for any $d$ there are four
pairs of solutions to the above system of congruences, corresponding
to the solutions to: \eq{ 2d\equiv kn +l(n+2) \ \ \ \mbox{mod} \ \
\frac{ n(n+2)}{2}. \label{crt2} } Note that if $(k_0,l_0)$ is a
solution of (\ref{crt2}), then the other solutions are
$(k_0,l_1),(k_1,l_0), (k_1,l_1)$, where $k_1=k_0+\frac{n+2}{2}$ and
$l_1=l_0+\frac{n}{2}$. Without loss of generality we denote
$(k_0,l_0)$ and $(k_1,l_1)$  the solutions such that $2d\equiv kn
+l(n+2) \ \ \ \mbox{mod} \ \  {n(n+2)}$ and we observe that
$\bbO^{k_0}_{d,{l_0}}=\bbO^{k_1}_{d,{l_1}}=\pm1$, whereas
$\bbO^{k_0}_{d,{l_1}}=\bbO^{k_1}_{d,{l_0}}=0$. \\

Just as before, for $l=l_0, l_1$ and all other $k$'s, the
corresponding coefficients in the matrix $\bbO_d$ vanish. This gives
the equalities:
\eq{
\bbF_{L,\X|{z=\frac{(n+2)\a_1}{2d}}}^{\rm
disk}(\mathbf{1}_{\frac{k_0}{n+2}}) =\label{eq:sdpce}
 \pm \bbF_{L',Y|{z=\frac{(n+2)\a_1}{2d}}}^{\rm disk}(\mathbf{1}_\frac{l_0}{n}),
} \eq{
\bbF_{L,\X|{z=\frac{(n+2)\a_1}{2d}}}^{\rm
disk}(\mathbf{1}_{\frac{k_1}{n+2}}) =\label{eq:sdpce2}
 \pm \bbF_{L',Y|{z=\frac{(n+2)\a_1}{2d}}}^{\rm disk}(\mathbf{1}_\frac{l_1}{n})
. }
We recognize the disk invariants of winding $d$ for $\X$ (resp. $Y$)
in the sum of the left hand sides (resp. right hand sides) of Eq.
\cref{eq:sdpce} and
Eq. \cref{eq:sdpce2}.  Hence, 
summing over all $d$, Eq. \cref{eq:sdpc} yields the equality of
scalar potentials as stated in Theorem \ref{thm:speclege}.
\hfill $\Box$ \end{proof}

\section{Example 2: the closed topological vertex}
\label{sec:ctv}

\subsection{Classical geometry}


The closed topological vertex arises from the GIT quotient
construction \cite{MR1677117}
\eq{\begin{xy} (15,0)*+{0}="z";
(30,0)*+{\bbZ^{3}}="y";(50,0)*+{\bbZ^{6}}="a"; (70,0)*+{\bbZ^3}="b";
(85,0)*+{0}="c"; {\ar^{}  "z";  "y"}; {\ar^{M^T}  "y";"a"}; {\ar^{N}
"a";"b"};
{\ar^{} "b";"c"};
\end{xy},
\label{eq:GIT} }
where
\ea{ M= & \l( \bary{cccccc}
1 & 1 & 0&-2 & 0& 0  \\
1 & 0 & 1& 0 &-2 & 0   \\
 0 & 1 & 1 & 0 & 0 & -2 \\
\eary
 \r),
\label{eq:ctvM} &
 N= & \l(
\bary{cccccc}
0 & 2 & 0 &1& 0 & 1   \\
 0 & 0 & 2 & 0 & 1 & 1   \\
1 & 1 & 1 & 1 & 1 & 1  \\
\eary \r). }
The resulting geometry is a quotient $
\bbC^6/\!\!/_\chi(\bbC^\star)^3$, where the characters of the torus
action on the affine coordinates $x_1,\dots,
x_6$ of $\bbC^6$ are encoded in the rows of $M$. \\

\begin{figure}[h]
\centerline{\includegraphics{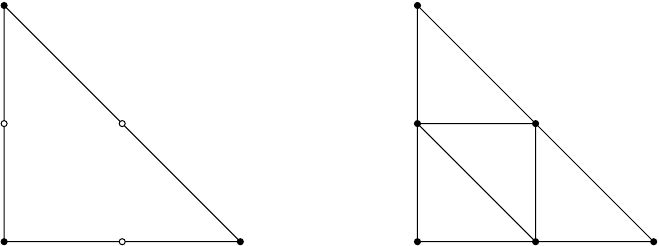}}
\caption{Fans of $[\bbC^3/(\bbZ_2 \times \bbZ_2)]$ (left) and its
  $G$-Hilb canonical resolution (right), depicting a slice of the three dimensional picture with a horizontal hyperplane at height $1$.}
\label{fig:fanctv}
\end{figure}

In two distinct chambers, the GIT quotient yields the toric
varieties whose fans are given by cones over Figure \ref{fig:fanctv}.
The picture on the left hand side corresponds to the orbifold
chamber: we delete the unstable locus
%
%
%
\eq{ \Delta_{\rm OP} \triangleq V\l( \bra x_4x_5x_6\ket \r). }
and then quotient by Eq. \cref{eq:ctvM}:
using the torus action to make $x_4$, $x_5$ and $x_6$  equal to $1$
gives a residual effective $\mu_2^3/\mu_2\cong \bbZ_2 \times \bbZ_2$
action\footnote{We choose the isomorphism given by $(0,1)$ being the
element whose representation fixes $z$, $(1,0)$ fixing $y$ and
$(1,1)$ fixing~$x$.} on $\bbC^3$ with coordinates $x_1$, $x_2$,
$x_3$. We  denote by $\cX \triangleq [\bbC^3/(\bbZ_2 \times
\bbZ_2)]$ the resulting orbifold, and by $X$ its coarse
moduli space. \\

The picture on the right hand side corresponds instead to the
distinguished large radius chamber that gives rise to Nakamura's
Hilbert scheme of $(\bbZ_2 \times \bbZ_2)$-clusters: we delete the
set
%
%
%
\eq{ \Delta_{\rm LR} \triangleq V\l(\prod_{(i,j,k) \neq (1,4,5),
(2,4,6), (3,5,6), (4,5,6)}\bra x_i,x_j,x_k\ket\r) }
and then quotient by the $(\bbC^\star)^3$ action in Eq.
\cref{eq:ctvM}; we will denote by $Y$ the corresponding smooth toric
variety. This is the trivalent geometry on the right-hand-side of
Figure \ref{fig:toricctv}: the local geometry of three $(-1,-1)$ curves
inside a Calabi--Yau threefold intersecting at a point.

\begin{figure}[h]
\centerline{\input{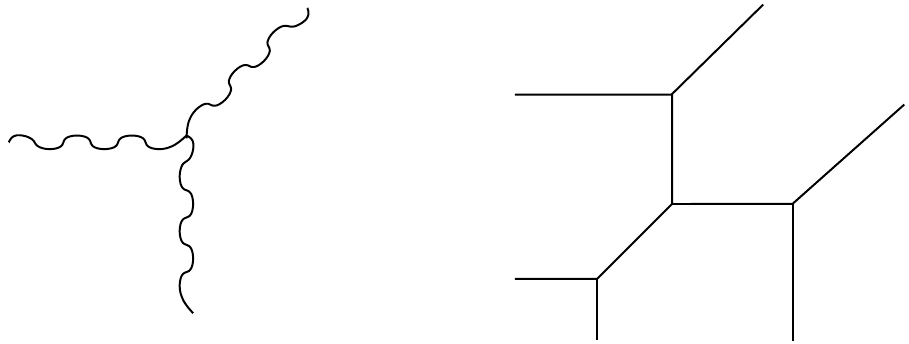_t}}
\caption{Toric web diagrams and weights at the fixed points of
$[\bbC^3/(\bbZ_2 \times \bbZ_2)]$ (left) and its
  $G$-Hilb canonical resolution (right).}
\label{fig:toricctv}
\end{figure}

\subsubsection{Bases for cohomology}
\label{sec:cec} We equip  $Y$ and $\cX$ with a Calabi--Yau 2-torus
action descending from the action on $\bbC^6$ with geometric weights
$(\a_1,\a_2, -\a_1-\a_2,0,0,0)$.
%
%
This descends to give an effective $T\simeq (\bbC^*)^2$ action on
$Y$ and $\cX$ which preserves their canonical bundle; the resolution
diagram
\eq{
\begin{xy}
(0,25)*+{Y}="a"; (50,25)*+{\cX}="b";(25,0)*+{X}="c"; {\ar^{\rho}
"a";  "c"}; {\ar_{\pi}  "b";"c"};
\end{xy}
\label{eq:resYX} }
is $T$-equivariant.\\

Bases for the equivariant cohomology of $Y$ and $\cX$ can be
presented as follows. Let $L_i \subset Y$, $i=1,2,3$ denote the
torus-invariant projective lines
%
\ea{ \label{eq:L1}
L_1 =& V(x_4, x_5), \\
\label{eq:L2}
L_2  = & V(x_4, x_6), \\
\label{eq:L3} L_3 = & V(x_5, x_6). }
The cohomology of $Y$ is generated as a module by the duals
$\omega_i=[L_i]^\vee \in H_2(Y)$ of the fundamental classes in Eqs.
\cref{eq:L1}--\cref{eq:L3}, plus the identity
class $\mathbf{1}_Y\in H_0(Y)$.
The action on $\bbC^6$  above yields canonical lifts of
$i_{L_j}^*\omega_i=c_1(\cO_{L_j}(\delta_{ij}))$ to equivariant
cohomology. Denoting by $q$ the intersection of the three fixed
lines, $p_i$ the other torus fixed point of $L_i$, and by capital
letters the corresponding cohomology classes,
 the Atiyah--Bott isomorphism sends:
\ea{ \label{eq:abctv1}
\omega_1 & \to  \frac{\a_1}{2} (Q-P_1+P_2+P_3), \\
\label{eq:abctv2}
\omega_2 & \to \frac{\a_2}{2}  (Q+P_1-P_2+P_3), \\
\label{eq:abctv3} \omega_3 & \to -\frac{\a_1+\a_2}{2}
(Q+P_1+P_2-P_3). }
%
%
The $T$-equivariant Poincar\'e pairing
$\eta^Y(\phi_1,\phi_2)=\sum_{P_i}\phi_1|_{P_i}
\phi_2|_{P_i}\re^{-1}(N_{P_i/Y})$, in the basis $(Q,P_1,P_2,P_3)$
for $H^\bullet_T(Y)$, takes the block-diagonal form
\eq{ \eta^Y= \left(
\begin{array}{cccc}
 \frac{2}{\a_2 \a_1^2+\a_2^2 \a_1} & 0 & 0 & 0 \\
 0 & \frac{\a_1}{2 \a_2^2+2 \a_1 \a_2} & -\frac{1}{2 (\a_1+\a_2)} & \frac{1}{2 \a_2} \\
 0 & -\frac{1}{2 (\a_1+\a_2)} & \frac{\a_2}{2 \a_1^2+2 \a_2 \a_1} & \frac{1}{2 \a_1} \\
 0 & \frac{1}{2 \a_2} & \frac{1}{2 \a_1} & \frac{1}{2} \left(\frac{1}{\a_2}+\frac{1}{\a_1}\right) \\
\end{array}
\right). }
On $\cX$, the torus equivariant cohomology is spanned by the
$T$-equivariant cohomology classes $\mathbf{1}_g$, labeled by the
corresponding group elements $g=(0,0)$, $(0,1)$, $(1,0)$ and $(1,1)$. \\

\subsubsection{The grade restriction window}
\label{sec:gradectv} Consider the natural restriction window
$\mathfrak{W}$ consisting of the trivial representation of
$(\bbC^\ast)^3$ and the three one dimensional representations whose
characters are given by  the first three columns of the matrix $M$
in Eq. \cref{eq:ctvM}.  These descend to the four irreducible
representations of $\cX$, whose nontrivial characters  are still
encoded by the first three columns of $M$ via
$\rm{i}\pi$-exponentiation; and to the bundles $\cO$ and
$\cO_{L_j}(\delta_{ij})$ on $Y$. Using $\mathfrak{W}$ to identify
the $K$-lattices, the natural basis of irreducible representations
for $H^\bullet_{T}(\cX)$ and the fixed point basis for
$H^\bullet_{T}(Y)$, the matrix  representing the (homogenized,
involution pulled-back) Chern character for $\cX$ and $Y$ are
\ea{ ({\overline{\rm CH}_{\cX}})_j^k \triangleq  & {\left(\frac{2\pi
\rm{i}}{z}\right)}^{\frac{1}{2}\deg }\mathrm{inv}^\ast
\mathrm{CH}_\cX= \left(
\begin{array}{cccc}
1 &1&1& 1\\
1 &-1&-1& 1\\
1 &-1&1& -1\\
1 &1&-1&-1
\end{array}
\right)
\label{eq:chXctv} \\
({\overline{\rm CH}_{Y}})_j^l=& \left(
\begin{array}{cccc}
1 &\re^{\frac{\pi \mathrm{i} \a_1}{z}}&\re^{\frac{\pi \mathrm{i} \a_2}{z}}& \re^{-\frac{\pi \mathrm{i} (\a_1+\a_2)}{z}}\\
1 &\re^{-\frac{\pi \mathrm{i} \a_1}{z}}&\re^{\frac{\pi \mathrm{i} \a_2}{z}}& \re^{-\frac{\pi \mathrm{i} (\a_1+\a_2)}{z}}\\
1 &\re^{\frac{\pi \mathrm{i} \a_1}{z}}&\re^{-\frac{\pi \mathrm{i} \a_2}{z}}& \re^{-\frac{\pi \mathrm{i} (\a_1+\a_2)}{z}}\\
1 &\re^{\frac{\pi \mathrm{i} \a_1}{z}}&\re^{\frac{\pi \mathrm{i} \a_2}{z}}&
\re^{\frac{\pi \mathrm{i} (\a_1+\a_2)}{z}}
\end{array}
\right). \label{eq:chYctv}
}

\subsection{Quantum geometry}

The primary $T$-equivariant Gromov--Witten invariants of $Y$ were
computed for all genera and degrees in \cite{MR2207792}. Let $d_i$,
$i=1,2,3$ be the degrees of the image of a stable map to $Y$
measured with respect to the basis $L_i$, $i=1,2,3$ of
$H_2(Y,\bbZ)$, and suppose that $d_1+d_2+d_3 \neq 0$. Then
\cite[Prop.~11--15]{MR2207792}
\eq{ \int_{\overline{\cM_{g,0}}(Y; d_1,d_2,d_3)} 1 =
\frac{|B_{2g}|(2g-1)}{(2g)! (d_1+d_2+d_3)^{3-2g}}
\begin{cases}
1 & d_1=d_2=d_3, \\
1 & d_i=d_j=0, d_k>0, i\neq j\neq k, \\
-1 & d_1=d_j>0, d_k=0, i\neq j\neq k, \\
0 & \mathrm{else.}
\end{cases}
\label{eq:gwctv} }
The genus-zero Gromov--Witten potential then takes the form
\ea{ & F^Y(t)   \triangleq  \frac{1}{3!}\eta^Y(\phi, \phi \cup \phi)
+ \sum_{n \geq
    0} \sum_{d_1,d_2,d_3}\int_{\overline{\cM_{0,n}}(Y;d_1,d_2,d_3)} \frac{\prod_{i=1}^n \ev^*_i \phi}{n!}
\nn \\
& =  \frac{1}{6} \left(\frac{t_0^3}{\alpha _1 \left(-\alpha
_1-\alpha _2\right)
   \alpha _2}+\frac{\left(t_0-\alpha _2 t_2\right){}^3}{\alpha _1 \alpha _2 \left(\alpha _1+\alpha _2\right)}+\frac{\left(\left(\alpha _1+\alpha _2\right)
   t_3+t_0\right){}^3}{\alpha _1 \alpha _2 \left(\alpha _1+\alpha _2\right)}+\frac{\left(t_0-\alpha _1 t_1\right){}^3}{\alpha _1 \alpha _2 \left(\alpha _1+\alpha
   _2\right)}\right) \nn \\ & +  \text{Li}_3\left(\re^{t_1}\right)+\text{Li}_3\left(\re^{t_2}\right)-\text{Li}_3\left(\re^{t_1+t_2}\right)+\text{Li}_3\left(\re^{t_3}\right)-\text{Li}_3\left(\re^{t_1+t_3}\right)-\text{Li}_3\left(\re^{t_2+t_3}\right)+\text{Li}_3\left(\re^{t_1+t_2+t_3}\right)
\label{eq:prepctv} }
where we denoted $H_T(Y) \ni \phi := \sum_{i=0}^3 t_i \omega_i$ and
$\Li_3(x)$ is the polylogarithm function of order 3:
\eq{ \Li_n(y) = \sum_{k>0}\frac{y^k}{k^n}. }

As far as $\cX$ is concerned, its quantum cohomology was determined
in \cite{MR2518631} by an explicit calculation of $\bbZ_2 \times
\bbZ_2$ Hurwitz--Hodge integrals. Introduce linear coordinates
$x_{i,j}$ on the $T$-equivariant Chen--Ruan cohomology of $\cX$ by
$H^{\rm orb}_T(\cX) \ni\varphi := \sum_{i,j\in
  {0,1}} x_{i,j}\mathbf{1}_{(i,j)}$. Then \cite[Thm.~ 2]{MR2518631},
\eq{ F^\cX(x) = F^Y(t(x)) }
where the Bryan--Graber change of variables $t(x)$ reads
\eq{ \l( \bary{c}
t_0 \\
t_1 \\
t_2 \\
t_3 \\
\eary \r) = \left(
\begin{array}{cccc}
 1 & \frac{1}{2} \ri \a_1 & \frac{1}{2} \ri \a_2 &
   -\frac{1}{2} \ri (\a_1+\a_2) \\
 0 & \frac{\ri}{2} & -\frac{\ri}{2} & -\frac{\ri}{2} \\
 0 & -\frac{\ri}{2} & \frac{\ri}{2} & -\frac{\ri}{2} \\
 0 & -\frac{\ri}{2} & -\frac{\ri}{2} & \frac{\ri}{2} \\
\end{array}
\right) \l( \bary{c}
x_{0,0} \\
x_{1,0} \\
x_{0,1} \\
x_{1,1} \\
\eary \r) +\frac{\ri \pi}{2} \l( \bary{c}
0 \\
1 \\
1 \\
1 \\
\eary \r) . }

\subsection{One-dimensional mirror symmetry}
In the analysis of the disk and quantized CRC for the type~A
resolutions in \cite{Brini:2013zsa}, a prominent role was played by
a realization of the $D$-modules underlying quantum cohomology in
terms of a single-field logarithmic Landau--Ginzburg model, or, in
the language of \cite{2012arXiv1210.2312R}, the Frobenius dual-type
structure on a genus-zero double Hurwitz space. This was motivated
by a connection of the Gromov--Witten theory for these targets with
a class of reductions of the 2-Toda hierarchy \cite{Brini:2014mha}.
A similar connection with integrable systems holds for the closed
topological vertex as well; the general story will appear elsewhere,
but its consequences for the purposes of the paper are discussed below. \\

Define
\ea{ Z_1 \triangleq & -\frac{\re^{\frac{t_2}{2}}
\left(\re^{t_1}-1\right)
   \left(\re^{t_3}-1\right)}{\left(\re^{t_1+t_2}-1\right){}^2}, &
Z_2 \triangleq \frac{\re^{-\frac{t_2}{2}} \left(\re^{t_2}-1\right)
   \left(\re^{t_1+t_2+t_3}-1\right)}{\left(\re^{t_1+t_2}-1\right){}^2}, \nn \\
Z_3 \triangleq & \frac{\re^{t_1+\frac{t_2}{2}}
\left(\re^{t_2}-1\right)
   \left(\re^{t_3}-1\right)}{\left(\re^{t_1+t_2}-1\right){}^2}, &
Z_4 \triangleq -\frac{\re^{\frac{t_2}{2}} \left(\re^{t_1}-1\right)
   \left(\re^{t_1+t_2+t_3}-1\right)}{\left(\re^{t_1+t_2}-1\right){}^2}.
\label{eq:Zi} }
Fix now a branch $\cC$ of the logarithm  and denote by
$\cM_{\a_1,\a_2} \simeq M_{0,6} \times \bbC^*$ the smooth complex
four-dimensional manifold of multi-valued functions $\lambda(q)$ of
the form
\ea{ \cM_{\a_1,\a_2} = & \Big\{\lambda(q) =
t_0+\frac{(\a_1-\a_2)t_2}{2}
+\a_1 \log(Z_1-q)(Z_2-q) +\a_2 \log (Z_3-q)(Z_4-q) \nn \\
- & (\a_1+\a_2)\log q; \quad Z_i \neq 0,1, Z_j\Big\}.
\label{eq:Ma1a2} }
A given point $\lambda \in \cM_{\a_1,\a_2}$ is a perfect Morse
function in $q$ with four critical points $q^{\rm cr}_i$,
$i=1,\dots, 4$; its critical values,
\eq{ \label{eq:cancoord} u^i=\log\lambda(q^{\rm cr}_i), }
give a system of local coordinates on $\cM_{\a_1,\a_2}$, which is
canonical up to permutation.  Define now holomorphic tensors $\eta
\in \Gamma(\mathrm{Sym}^2 T^* \cM_{\a_1,\a_2})$, $c \in
\Gamma(\mathrm{Sym}^3 T^* \cM_{\a_1,\a_2})$ on $\cM_{\a_1,\a_2}$ via
\ea{ \label{eq:etaLG} \eta(\de, \de')= & \sum_{i=1}^4
\mathrm{Res}_{q=q^{\rm cr}_i} \frac{\de(\lambda)
  \de'(\lambda)}{\lambda'(q)}\psi(q) \rd q, \\
c(\de, \de', \de'')= & \sum_{i=1}^4 \mathrm{Res}_{q=q^{\rm cr}_i}
\frac{\de(\lambda)
  \de'(\lambda) \de''(\lambda)}{\lambda'(q)}\psi(q) \rd q
\label{eq:cLG} }
for holomorphic vector fields $\de$, $\de'$, $\de''$ on
$\cM_{\a_1,\a_2}$, where
\eq{
\psi(q)=\frac{1}{\a_2}\l[\frac{1}{q-Z_1}+\frac{1}{q-Z_2}-\frac{1}{q}\r].
}
Whenever $\eta$ is non-degenerate, this defines a commutative,
unital product $\de \circ \de'$ on $\Gamma(T\cM_{\a_1,\a_2})$ by
``raising the indices'': $\eta(\de,\de' \circ \de'')=c(\de, \de',
\de'')$.
\begin{thm}
Eqs. \cref{eq:etaLG} and \cref{eq:cLG} define a semi-simple
Frobenius manifold structure $\cF_{\a_1,\a_2}\triangleq
(\cM_{\a_1,\a_2}, \eta, \circ)$ on $\cM_{\a_1,\a_2}$ with
covariantly constant unit. Moreover,
\eq{ \cF_{\a_1,\a_2} = QH_T(Y) \simeq QH_T(\cX) }
\end{thm}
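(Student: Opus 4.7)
The strategy is to split the statement into three blocks: (i) verifying that the residue formulas \cref{eq:etaLG,eq:cLG} define a semi-simple Frobenius structure with covariantly constant unit on $\cM_{\alpha_1,\alpha_2}$; (ii) identifying the flat coordinates of this structure with $(t_0,t_1,t_2,t_3)$ and computing the associated prepotential; (iii) matching the result with the $T$-equivariant genus-zero potential \cref{eq:prepctv}. The final isomorphism $\cF_{\alpha_1,\alpha_2}\simeq QH_T(\cX)$ then follows from the $QH_T(Y)$-identification together with the Bryan--Graber theorem of \cite{MR2518631} recalled above.

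For (i), I would invoke the general Hurwitz/logarithmic Landau--Ginzburg construction of Dubrovin: given a covering $\lambda: \mathbb{P}^1 \to \mathbb{P}^1$ of the form \cref{eq:Ma1a2} and a primary differential $\psi(q)\mathrm{d}q$ with simple poles, the residue pairing \cref{eq:etaLG} is flat, the 3-tensor \cref{eq:cLG} is the third derivative of a prepotential in flat coordinates, WDVV holds by the thrice-puncture identity for residues, and the product $\circ$ is automatically commutative, associative, and unital with unit the vector field $e$ dual to $\partial_e \lambda = 1$. Since $\lambda$ in \cref{eq:Ma1a2} depends on $t_0$ only through an additive constant, $e = \partial_{t_0}$, and covariant constancy of $e$ reduces to the vanishing of the derivatives of $\eta_{ij}$ with respect to $t_0$, which is immediate from the residue formula. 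Semi-simplicity on the open locus where $\lambda$ is Morse follows since the critical values $u^i$ in \cref{eq:cancoord} diagonalize $\circ$.

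For (ii)--(iii), the central task is to show that $(t_0,t_1,t_2,t_3)$ are flat coordinates for $\eta$ and that the resulting prepotential matches \cref{eq:prepctv}. First, I would verify flatness by a direct residue computation of $\eta(\partial_{t_i},\partial_{t_j})$: using $\partial_{t_0}\lambda=1$ and $\partial_{t_i}\lambda = \alpha_{?}\,\partial_{t_i}\log(\dots)$ for $i=1,2,3$, the residues at the critical points collapse (after a standard Riemann bilinear manipulation) to a matrix of rational functions in the $\alpha_k$ independent of the $Z_i$'s, which one checks equals the inverse of the Gram matrix of the fixed-point basis on $Y$ displayed in \cref{sec:cec}. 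Second, I would compute $c(\partial_{t_i},\partial_{t_j},\partial_{t_k})$: by the $t_0$-derivative argument the dependence on $t_0$ is polynomial of degree $\leq 2$, so the prepotential splits as a cubic classical part plus a $t_0$-independent instanton part; the classical part reproduces the first line of \cref{eq:prepctv} directly from the $\eta$-computation, while the instanton part is matched by recognizing that the logarithmic derivatives $\partial_{t_i} Z_j/Z_j$ factor through the geometric series $\sum_k \re^{k\,t_i}\cdots$ expected from $\mathrm{Li}_3$. Equivalently, one can extract $\sum_{d\geq 0}(\prod_i \re^{d_i t_i})\,N_{0,d}^Y$ from the residue formula and compare with \cref{eq:gwctv} in genus zero.

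The main obstacle is (iii): matching the full instanton series rather than just its leading term. Two alternative routes handle this cleanly. The first is to use reconstruction via WDVV: since the divisor equation, the string equation and the specialization to $d_1=d_2=d_3=0$ are all manifest from the residue formulas, only finitely many initial Gromov--Witten numbers need be checked against \cref{eq:gwctv}, after which WDVV on both sides forces equality. The second, closer in spirit to \cite{Brini:2014mha}, is to show that the oscillatory integrals $\int \re^{\lambda/z}\psi(q)\mathrm{d}q$ along Lefschetz thimbles satisfy the quantum differential equation of $Y$, and deduce the Frobenius isomorphism from the identification of $D$-modules; this has the advantage of simultaneously producing the $R$-calibration needed in \cref{sec:ctv} for the quantized OCRC. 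Once $\cF_{\alpha_1,\alpha_2}\simeq QH_T(Y)$ is established, the identification $QH_T(Y)\simeq QH_T(\cX)$ is exactly the content of the Bryan--Graber change of variables recalled above \cref{eq:prepctv}, and we are done.
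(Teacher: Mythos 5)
Your proposal is correct and follows essentially the same route as the paper: the paper's (very terse) proof consists of exactly your block (i) — associativity and semi-simplicity from the fact that the canonical vector fields $\de_{u^i}$ are idempotents — followed by a direct residue evaluation of \cref{eq:etaLG,eq:cLG} in the chart $(t_0,\dots,t_3)$ showing that the $t_i$ are flat coordinates and that the resulting potential coincides with \cref{eq:prepctv}, with $QH_T(\cX)$ then identified via the Bryan--Graber change of variables. The alternative machinery you offer for matching the instanton series (WDVV reconstruction, or the oscillatory-integral/$D$-module route) is not invoked at this point in the paper, which just computes the residues in closed form, though your second alternative is effectively what the paper deploys later in \cref{sec:asymp} to produce the $R$-calibration.
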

\begin{proof}
Associativity and semi-simplicity of the product follow immediately
from the fact that the canonical coordinate fields, $\de_{u^i}$, are
a basis of idempotents of Eq. \cref{eq:cLG}. A straightforward
computation of the residues in Eq. \cref{eq:etaLG} in the coordinate
chart $t_i$ shows that Eq. \cref{eq:etaLG} is a flat metric and the
variables $t_i$ are a flat coordinate system for $\eta$; similarly,
a direct evaluation of Eq. \cref{eq:cLG} shows that the algebra
admits a potential function, which coincides with Eq.
\cref{eq:prepctv}.
\hfill $\Box$ \end{proof}

\begin{cor}
Let $\nabla_X^{(z)} Y = \rd_X Y + z X \circ Y$ be the Dubrovin
connection on $\cF_{\a_1,\a_2}$. Then a system of flat coordinates
for $\nabla_X^{(z)}$ is given by the periods
\eq{ \Pi_i= \frac{z}{(1-\re^{2\pi \ri \a_1/z})(1-\re^{(-1)^i 2\pi\ri
(\a_1+\a_2)/z})}\int_{\gamma_i} \re^{\lambda/z}\psi(q) \rd q
\label{eq:Pi} }
where $\gamma_1=[C_{Z_1}, C_\infty]$, $\gamma_2=[C_{0}, C_{Z_2}]$,
$\gamma_3=[C_{Z_2},C_\infty]$, $\gamma_4=[C_0, C_{Z_1}]$ and we
denoted by $C_x$ a simple loop encircling counterclockwise the point
$q=x$.
\end{cor}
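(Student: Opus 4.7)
The plan is to verify directly that the oscillatory integrals $\Pi_i$ satisfy the flat-section equation of the Dubrovin connection, following the standard Saito--Dubrovin recipe for producing flat coordinates from oscillatory integrals in a Landau--Ginzburg model. Concretely, I would show $\eta^{-1}(\rd \Pi_i)$ is a $\nabla^{(z)}$-flat vector field; equivalently, that $z\,\de_{t_i}\de_{t_j}\Pi = c_{ij}^k\,\de_{t_k}\Pi$ when written in the flat coordinates $t_j$ of $\eta$ identified in the preceding theorem.

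First I would differentiate $\Pi_i$ with respect to the flat coordinate fields $\de_{t_j}$: the normalising prefactor is inert under $\de_{t_j}$, so one simply picks up a factor $\de_{t_j}\lambda/z$ in the integrand. Iterating, the second derivative $\de_{t_i}\de_{t_j}\Pi$ produces a quadratic term $\de_{t_i}\lambda \cdot \de_{t_j}\lambda/z^2$ plus a mixed-partial contribution $\de_{t_i}\de_{t_j}\lambda/z$. I would then invoke the Saito residue identity
\eq{
\de_{t_i}\lambda \cdot \de_{t_j}\lambda \equiv c_{ij}^k(t)\, \de_{t_k}\lambda \pmod{\lambda'(q)},
}
which holds on the nose from the definition of the structure tensors in \cref{eq:cLG}, since both sides have identical residues at every simple critical point of $\lambda$. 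The resulting $\lambda'(q)$-multiple $h_{ij}(q,t)$ is absorbed via integration by parts $\int_{\gamma_i}\lambda'\,h_{ij}\,\psi\,\re^{\lambda/z}\rd q = -z\int_{\gamma_i}(h_{ij}\psi)'\,\re^{\lambda/z}\rd q$; combining this with the analogous treatment of the $\de_{t_i}\de_{t_j}\lambda$ term gives the deformed flat-section equation $z\de_{t_i}\de_{t_j}\Pi = c_{ij}^k\de_{t_k}\Pi$.

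The key technical subtlety is the multi-valuedness of the integrand. The logarithmic terms in \cref{eq:Ma1a2} make $\re^{\lambda/z}$ pick up phases $\re^{2\pi\ri\a_1/z}$ and $\re^{\pm 2\pi\ri(\a_1+\a_2)/z}$ under monodromy around the relevant branch points, so the naive segment integral diverges or is undefined generically. The Pochhammer-type commutator cycles $\gamma_i=[C_x,C_y]$ are designed precisely so that the integrand becomes single-valued along the cycle; this is what guarantees that the integration-by-parts step above has no boundary contributions. Dividing by $(1-\re^{2\pi\ri\a_1/z})(1-\re^{(-1)^i 2\pi\ri(\a_1+\a_2)/z})$ then trades the Pochhammer contour for the regularised ``naive'' integral between the two branch points, in exact analogy with \cref{eq:EulerBeta}.

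Linear independence of the four $\Pi_i$ (hence the fact that they form a full flat coordinate system, not merely a flat subspace) can be extracted from a stationary-phase expansion as $z\to 0^+$ along an admissible ray: each cycle $\gamma_i$ retracts onto a Lefschetz thimble through a distinct critical point of $\lambda$, yielding leading asymptotics $\Pi_i \sim (\mathrm{nonzero\ constant})\cdot\sqrt{2\pi z/|\lambda''(q_i^{\rm cr})|}\,\re^{u^i/z}$ at four distinct critical values $u^i$. I expect the main obstacle to lie in the careful bookkeeping of the vanishing of boundary terms in the integration-by-parts step and in the matching between the chosen branch $\cC$ of the logarithm and the specific cycle representatives $[C_x,C_y]$; once these are settled, the argument is an application of the standard Saito--Dubrovin oscillatory-integral calculus to the LG model $(\cM_{\a_1,\a_2},\lambda,\psi\,\rd q)$ constructed above.
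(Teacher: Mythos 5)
Your overall strategy---differentiate the oscillatory integral twice, reduce $\de_{t_i}\lambda\,\de_{t_j}\lambda$ modulo $\lambda'$ using the residue definition of $c$, and integrate by parts over the Pochhammer cycles, whose commutator structure kills boundary terms and whose normalization factors match \cref{eq:EulerBeta}---is exactly the standard Saito--Dubrovin argument, and it is essentially how the result is actually established; the paper itself does not reprove the corollary but cites \cite[Prop.~5.2]{Brini:2013zsa} together with the dictionary $\lambda\mapsto \re^{\lambda}$, $\phi\mapsto\psi(q)\,\rd q$. Your identification of the monodromy phases $\re^{2\pi\ri\a_1/z}$ and $\re^{\pm 2\pi\ri(\a_1+\a_2)/z}$ and of the role of the prefactor is correct, as is the stationary-phase argument for linear independence.

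There is, however, a concrete gap at the very first step. You assert that applying $\de_{t_j}$ to $\Pi_i$ ``simply picks up a factor $\de_{t_j}\lambda/z$ in the integrand.'' This is false: the primitive differential $\psi(q)\,\rd q=\tfrac{1}{\a_2}\,\rd\log\bigl((q-Z_1)(q-Z_2)/q\bigr)$ depends on the flat coordinates through the moving poles $Z_1(t)$, $Z_2(t)$ of \cref{eq:Zi}, so $\de_{t_j}\Pi_i$ contains the additional term $\int_{\gamma_i}\re^{\lambda/z}\,\de_{t_j}\psi\,\rd q$. Since $\de_{t_j}\psi\,\rd q$ is an exact (second-kind) differential, integrating it by parts over the closed cycle produces a contribution of the \emph{same} order in $1/z$ as your main term, so it cannot be discarded; the deformed flatness equation only closes once this term is tracked (this is precisely the bookkeeping carried out in the Hurwitz-space proofs you are implicitly invoking). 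A second, smaller issue: the claim that $\de_{t_i}\lambda\,\de_{t_j}\lambda\equiv c_{ij}^k\de_{t_k}\lambda \pmod{\lambda'}$ ``holds on the nose'' from \cref{eq:cLG} only gives vanishing of the difference at the four simple critical points; to divide by $\lambda'$ and then integrate by parts without generating spurious residues you must also control the quotient $h_{ij}$ at the punctures $Z_1,Z_2,0,\infty$, where both $\lambda'$ and $\psi$ have poles. Neither point is fatal---both are resolved in the cited \cite[Prop.~5.2]{Brini:2013zsa}---but as written your first-derivative formula is incorrect and the subsequent computation would not close.
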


This is \cite[Prop.~5.2]{Brini:2013zsa}, where the superpotential
and primitive differential $\lambda$ and $\phi$ there are identified
respectively with $\re^{\lambda}$ and $\psi(q) \rd q$ here: the
contours $\gamma_i$ give a basis of the first homology of the
complex line twisted by a set of local coefficients given by the
algebraic monodromy of $\re^{\lambda/z}$ around the singular points
$Z_i$, $0$ and $\infty$. The reason behind this particular choice of
basis, as well as the normalization factor in front of the integral,
will be apparent in the course of the asymptotic analysis of
Section \ref{sec:asymp}.

\begin{rmk}{\rm
In the language of \cite{2012arXiv1210.2312R}, the Frobenius
manifold $\cF_{\a_1,\a_2}$ is the Frobenius dual-type structure on
the genus zero double Hurwitz space $H_{0,\kappa}$ with ramification
profile $\kappa=(\a_1,\a_1,\a_2,\a_2,-\a_1-\a_2,\a_1-\a_2)$, with
$\re^{\lambda}$ as its superpotential and the third kind
differential $\psi(q)\rd q$ as its primitive one-form; the integrals
Eq. \cref{eq:Pi} were called the {\it twisted periods} of
$\cF_{\a_1,\a_2}$ in \cite{Brini:2013zsa}. The corresponding
Principal Hierarchy \cite{Dubrovin:1994hc} is a four-component
reduction of the genus-zero Whitham hierarchy with three punctures
\cite{Krichever:1992qe}. The special case $\a_1=\a_2=\a$ is
particularly interesting, as in that case $\cF_{\a,\a}$ is the dual
(in the sense of Dubrovin \cite{MR2070050}) of a conformal charge
one Frobenius manifold with non-covariantly constant identity; flat
coordinates for the two Frobenius structures are in bijection with
Darboux coordinates for a pair of compatible Poisson brackets for
the Principal Hierarchy, which thus gives rise to a (new)
bihamiltonian integrable system of independent interest. We will
report on it in a forthcoming work.}
\end{rmk}

\subsubsection{Computing $\U$}
Encoding the coefficients of $\overline{\Gamma}_\cX(z)$ and
$\overline{\Gamma}_Y(z)$  as entries of diagonal matrices, the
prediction for the symplectomorphism $\U$ from Iritani's theory of
integral structure is obtained by composing \eq{ \U=
\overline{\Gamma}_Y \circ  \overline{\rm CH}_{Y} \circ \overline{\rm
CH}_{\cX}^{-1} \circ \overline{\Gamma}_\cX^{-1}, } as we now turn to
verify. Let $\cY_\epsilon$ be the ball of radius $\epsilon$ around
$\re^{t}=0$, measured w.r.t. the Euclidean metric $(\rd s^2)=\sum_i
(\rd \re^{t_i})^2$ in exponentiated flat coordinates, and define the
path in $\cY_1$
\eq{ \bary{cccc}
\rho: & [0,1] & \to & \cY_1, \\
& y & \to & (\rho(y))_j= \ri y. \label{eq:rhoctv} \eary }
Beside $\Pi_i$, systems of flat coordinates for the deformed flat
connection $\nabla^{(z)}$ are given by the components of the
$J$-functions of $\cX$ and $Y$; the discrepancy between them encodes
the morphism of Givental spaces that identifies the Lagrangian cones
of $\cX$ and $Y$ under analytic continuation along the  path $\rho$:
\eq{ J^{Y} = \U J^\cX. }
As in \cite{Brini:2013zsa}, $\U$ can be computed in two steps, by
expressing $J^\bullet$ in terms of the periods $\Pi$,
\ea{ \label{eq:B}
\Pi_i = & \sum_{\a=0}^3 \cB_{i\a} J_{\a}^\cX, \\
\Pi_i = & \sum_{j=1}^r \cA^{-1}_{ij} J_j^Y, }
where $J^\cX_{\a}$ and $J^Y_j$ are the components of the
$J$-functions of $\cX$ and $Y$ respectively in the inertia basis of
$\cX$ and in the localized basis of $Y$; we have labeled elements of
$\bbZ_2\times \bbZ_2$ by a single index $\a=0,1,2,3$ for $g=(0,0)$,
$(1,0)$, $(0,1)$ and $(1,1)$ respectively. Throughout the rest of
this Section, in order to simplify formulas, we define
$\mu_i\triangleq\alpha_i/z$.
\begin{prop}
\label{prop:AB} We have
\eq{ \label{eq:A} \cA^{-1}\DD_0^{-1} = \left(
\begin{array}{cccc}
-\re^{\ri \pi  \left(\mu _1+\mu _2\right)}  \frac{\sin \left(\pi
\mu _1\right)}{\sin \left(\pi  \mu _2\right)} & 0 &  \re^{\ri \pi
\left(\mu _1+\mu _2\right)}\frac{\sin \left(\pi  \mu _1\right)}{
\sin \left(\pi  \mu _2\right)} &
   -1 \\
 -(-1)^{\mu _1}  \frac{\sin \left(\pi (\mu _1+\mu_2)\right)}{\sin \left(\pi  \mu _2\right)}
   &  (-1)^{2 \mu _1} & (-1)^{\mu _1} \frac{\sin \left(\pi  \left(\mu _1+\mu
    _2\right)\right)}{ \sin \left(\pi  \mu _1\right)} & 0 \\
 0 & 0 & -1 & 0 \\
 -1 & 1 & 0 & 1 \\
\end{array}
\right) }
\eq{ \cB_{i\a}= (\DD_1 \cI \DD_2)_{i\a} \label{eq:Bfin} }
where \eq{ \label{eq:DD0} \DD_0 =  \diag
\mu_2^{-1}\left(-B(\mu_1,-\mu_1-\mu_2),B(-\mu_1,\mu_1+\mu_2),
B(-\mu_1-\mu_2,1+\mu_2), -B(\mu_1,-\mu_1-\mu_2) \right), } \eq{
\DD_1 =  \diag \l(\re^{\frac{1}{2} \ri \pi  (2 \mu_1+\mu_2)},
\re^{\frac{1}{2} \ri \pi  (2
  \mu_1+3\mu_2)}, \re^{-\frac{1}{2} \ri \pi  \mu_2} , \re^{\frac{1}{2} \ri \pi
  \mu_2}\r),
\label{eq:DD1} } \ea{ \DD_2 = \diag\bigg[ & -\frac{2}{\mu_2} B
\left(\frac{\mu_1}{2},-\frac{\mu_1+\mu_2}{2}\right),\ri
B\left(\frac{\mu_1}{2},\frac{1}{2} (1-\mu_1-\mu_2)\right), \nn \\ &
- B
  \left(\frac{1}{2} (\mu_1+1),\frac{1}{2}
  (1-\mu_1-\mu_2)\right),\ri B \left(\frac{1}{2} (\mu_1+1),-\frac{\mu_1+\mu_2}{2}\right)\bigg],
\label{eq:DD2} } \eq{ \cI  =
 \frac{1}{4}
\left(
\begin{array}{cccc}
 -1 & -1 & 1 & 1 \\
 1 & -1 & -1 & 1 \\
 -1 & 1 & -1 & 1 \\
 1 & 1 & 1 & 1 \\
\end{array}
\right), \label{eq:I} }
and $B(x,y)$ denotes Euler's $\beta$-function
\eq{ B(x,y)=\frac{\Gamma(x) \Gamma(y)}{\Gamma(x+y)} }
\end{prop}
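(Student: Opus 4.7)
The plan is to compute the two connection matrices $\cA^{-1}$ and $\cB$ by explicitly evaluating the period integrals $\Pi_i$ at a boundary point of the K\"ahler moduli space, and matching the result against closed-form expressions for the $J$-functions of $Y$ and $\cX$ at the same point. As a first step, I would write down explicit $I$-function expressions for $\cX$ and $Y$ using the Coates--Givental/Tseng quantum Riemann--Roch theorem, in the spirit of \cref{prop:Ifun}; near the orbifold point of $\cX$ and the large-radius point of $Y$ these take the form of convergent hypergeometric series whose components provide bases of solutions to the Dubrovin flat-section equation. Since the periods $\Pi_i$ are themselves flat sections of $\nabla^{(z)}$ by the preceding corollary, the matrices $\cA^{-1}$ and $\cB$ reduce to ordinary linear changes of basis between the period basis and the $I$-function basis.

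Next I would evaluate each $\Pi_i$ explicitly along the Pochhammer commutator $\gamma_i=[C_a,C_b]$. Writing
\[
\re^{\lambda/z} = \re^{t_0/z+(\a_1-\a_2)t_2/(2z)}\, q^{-(\a_1+\a_2)/z}\prod_{k=1,2}(Z_k-q)^{\a_1/z}\prod_{k=3,4}(Z_k-q)^{\a_2/z},
\]
each contour $\gamma_i$ surrounds exactly two branch points, and the commutator-contour trick used in the proof of \cref{lem:kum} reduces the integral to a product of Euler Beta integrals, up to an expansion in the remaining $Z_j$'s encoded by $\psi(q)\rd q$. The normalization prefactor in \cref{eq:Pi} is precisely what is needed to cancel the $(1-\re^{2\pi\ri\mu_1})(1-\re^{\pm 2\pi\ri(\mu_1+\mu_2)})$ factors produced by the commutator contours, cf.~\cref{eq:EulerBeta}. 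Expanding around the respective boundary point, the resulting closed form for $\Pi_i$ is a linear combination of the components of $J^Y$ (resp. $J^\cX$) with coefficients built from Gamma functions in $\mu_1,\mu_2$; reading off these coefficients yields $\cA^{-1}$ and $\cB$.

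The structure of the answer would then have a natural interpretation: the diagonal matrices $\DD_0,\DD_1,\DD_2$ collect the Beta-function prefactors and branch-phase factors coming from the four puncture contributions to \cref{eq:etaLG,eq:cLG}, the matrix $\cI$ of \cref{eq:I} is the (rescaled) character table of $\bbZ_2\times\bbZ_2$ matching the inverse Fourier transform on the twisted sectors of $\cX$ visible in \cref{eq:chXctv}, and the sine ratios in \cref{eq:A} arise from applying Euler's reflection identity $\Gamma(x)\Gamma(1-x)=\pi/\sin(\pi x)$ to the Beta prefactors once they are re-expressed in the fixed-point basis of $Y$. The main obstacle I anticipate is the bookkeeping of branches and phases: each $\gamma_i$ must be consistently lifted to the universal cover of the complement of $\{Z_j=0,1,Z_k\}$ along the path $\rho$ of \cref{eq:rhoctv}, and ensuring that the resulting phase exponentials match those in $\DD_1$ requires a careful analysis of argument continuation across the rays where the $Z_j$ vanish. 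Once these signs are pinned down---e.g., by matching the $z\to\infty$ limit with the Fourier-dual identification of torus-fixed points predicted by the hard Lefschetz CRC---the remaining identities reduce to routine Beta-function arithmetic.
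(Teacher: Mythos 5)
Your overall strategy for $\cB$ is essentially the paper's: the paper characterizes $J^\cX$ by the normalization $\de_\a J_\b(0,z)=\delta_{\a\b}$, so that $\cB_{i\a}=\de_\a\Pi_i(0,z)$, and then evaluates these derivatives at $x=0$ by writing the periods as Lauricella $F_D^{(3)}$ functions whose arguments specialize to roots of unity, at which point the integrands collapse to $t^\b(1-t)^\gamma$ and the Euler Beta integral finishes the job (\cref{sec:B-1}). Your route via explicit $I$-functions from quantum Riemann--Roch is a heavier but equivalent way of fixing the same normalization, and your reading of $\cI$ as the $\bbZ_2\times\bbZ_2$ character table and of $\DD_1,\DD_2$ as phase/Beta prefactors is accurate.

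The genuine gap is in your treatment of $\cA^{-1}$. You assert that at the large-radius boundary the commutator-contour trick again ``reduces the integral to a product of Euler Beta integrals'' and that what remains is ``routine Beta-function arithmetic.'' This fails: the reduction to pure Beta integrals relies on the branch points $Z_i$ sitting at symmetric (roots-of-unity) positions, which happens only at the orbifold point. As $\Re(t)\to-\infty$ the ratios $Z_i/Z_j$ degenerate simultaneously to $0$, $1$ and $\infty$ (\cref{eq:ZLR1,eq:ZLR2,eq:ZLR3,eq:ZLR4}), so the periods must be analytically continued to a highly degenerate corner of the domain of the Lauricella/Appell functions. Extracting the coefficients of the four monodromy eigenvectors $1$, $\re^{-\mu_1 t_1}$, $\re^{-\mu_2 t_2}$, $\re^{(\mu_1+\mu_2)t_3}$ --- which is what defines $\cA$ via the characterization \cref{eq:JYas} of $J^Y$ as diagonalizing the large-radius monodromy --- requires resumming the $F_D^{(3)}$ series in one variable at a time and applying the Kummer-type connection formula of \cref{lem:kum} (twice, for $\Pi_1$), then isolating which of the resulting terms survive the limit; this is the content of \cref{sec:A} and is where all the sine ratios and the vanishing entries of \cref{eq:A} actually come from. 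Your proposal contains no mechanism for performing this continuation, and without it the matrix $\cA^{-1}$ cannot be read off. You should also state explicitly the two uniqueness characterizations of $J^\cX$ and $J^Y$ as flat coordinate systems (normalized derivatives at $x=0$, resp.\ monodromy eigenbasis at large radius); these are what make ``reading off the coefficients'' well defined in the first place.
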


\begin{proof}
$J_{\a}^\cX(x,z)$ is characterized as the unique system of flat
  coordinates of $\nabla^{(z)}$ which is linear with no inhomogeneous term in
  $\re^{x_0/z}$ and satisfies
\eq{ \de_\a J_\b(0,z) = \delta_{\a,\b} }
at the orbifold point $x=0$. Then,
\eq{ \cB_{i,\a}= \de_\a \Pi_i(0,z). \label{eq:B-1} }
The integrals appearing on the r.h.s. of Eq. \cref{eq:B-1} can be
explicitly evaluated in terms of the Euler $\beta$-integral; this is
illustrated in detail in Appendix \hyperref[sec:B-1]{A.A}, and returns Eqs.
\cref{eq:DD1}--\cref{eq:I}. Similarly,
$J_{j}^Y(t,z)$ is characterized as the unique system of flat
  coordinates of $\nabla^{(z)}$ (linear with vanishing inhomogeneous term in
  $\re^{t_0/z}$) that diagonalizes the monodromy of $\nabla^{(z)}$ at large
  radius as
\ea{ J_j^Y(t,z) P_j = & z \l(i^*_{p_j} \re^{t \cdot
  \omega/z}\r)\l(1+\cO(\re^{t})\r) \nn \\
\sim & z \re^{t_0/z}
\begin{cases}
\re^{-\mu_1 t_1}P_1 & j=1, \\
 Q & j=2, \\
 \re^{-\mu_2 t_2}P_2 & j=3, \\
 \re^{(\mu_1+\mu_2) t_3}P_3 & j=4,
\end{cases}
\label{eq:JYas} }
where the r.h.s. is determined by the localization of $\omega_i$ at
$p_j$ as in Eqs. \cref{eq:abctv1}--\cref{eq:abctv3}. Then $\cA$ is determined by the decomposition of
the periods in terms of eigenvectors of the monodromy at large
radius, that is, by their asymptotic behavior as $\Re
(t)\to-\infty$. The details of the large radius asymptotics of
$\Pi_i$ are quite involved and are deferred to Appendix \hyperref[sec:A]{A.B};
the final result is Eq. \cref{eq:A}.
\hfill $\Box$ \end{proof}

\begin{cor}
Conjecture \ref{conj:iri} holds for $\cX=[\bbC^3/\bbZ_2 \times
\bbZ_2]$ and $Y\to X$ its $G$-Hilb resolution with grade restriction
window $\mathfrak{W}$ and analytic continuation path $\rho$ as in
Eqs. \cref{eq:chXctv}, \cref{eq:chYctv}, and \cref{eq:rhoctv}.
\end{cor}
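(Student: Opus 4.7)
The plan is to proceed in direct analogy with the proof of \cref{thm:crcproj}, using \cref{prop:AB} as the engine. Since the periods $\Pi$ provide a common intermediate flat basis for the Dubrovin connection, the morphism of Givental spaces identifying the two Lagrangian cones along $\rho$ is obtained as the composition
\eq{
\U = \cA \cB,
}
where $\cA$ and $\cB$ are the matrices from \cref{prop:AB}. The first step is therefore to assemble $\U$ explicitly from \cref{eq:A,eq:Bfin,eq:DD0,eq:DD1,eq:DD2,eq:I}: the diagonal conjugating factors $\DD_0, \DD_1, \DD_2$ can be combined into a single diagonal matrix of Beta/Gamma functions evaluated at linear combinations of $\mu_1, \mu_2$, while the combinatorial matrix $\cI$ is, up to sign and a factor of $4$, the character table of $\bbZ_2 \times \bbZ_2$ already visible in \cref{eq:chXctv}.

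Next I would verify the conjectured formula \cref{eq:iri} by mimicking the telescoping argument of \cref{thm:crcproj}. Set
\eq{
\bbV \triangleq \overline{\Gamma}_Y^{-1}\circ \U\circ \overline{\Gamma}_\cX,
}
where $\overline{\Gamma}_\cW$ is the diagonal matrix of homogenized Gamma factors computed from the classical toric data of $\cW$. The entries of $\overline{\Gamma}_\cX$, $\overline{\Gamma}_Y$ are products of Gamma functions at half-integer shifts of $\mu_1$, $\mu_2$, $\mu_1+\mu_2$; the Beta-function entries of $\DD_0$ and $\DD_2$ have been engineered precisely to cancel against these after applying Euler's reflection identity $\Gamma(x)\Gamma(1-x)=\pi/\sin(\pi x)$ and the duplication formula
\eq{
\Gamma(2x) = \frac{2^{2x-1}}{\sqrt{\pi}}\Gamma(x)\Gamma\!\left(x+\tfrac{1}{2}\right),
}
the latter being the key new ingredient compared to \cref{thm:crcproj} and necessary to bridge the half-integer shifts produced by the $\bbZ_2\times\bbZ_2$ twisted sectors. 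After this cleanup, the sines produced by reflection combine with the exponential prefactors $\DD_1$ and with the entries of $\cA^{-1}\DD_0^{-1}$ in \cref{eq:A} to give pure roots of unity of the form $\re^{\pm \ri\pi \mu_k}$ and signs, i.e.\ a trigonometric matrix with Laurent-polynomial entries in $\re^{\pi\ri\mu_k}$.

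The final step is to match this trigonometric matrix against $\overline{\mathrm{CH}}_Y\circ \overline{\mathrm{CH}}_\cX^{-1}$: the inverse of $\overline{\mathrm{CH}}_\cX$ in \cref{eq:chXctv} is, up to a factor $1/4$, the transpose of itself (it is essentially the character table of $\bbZ_2\times\bbZ_2$), so that right-multiplication of $\bbV$ by $\overline{\mathrm{CH}}_\cX$ resolves into four summed copies of $\re^{\pm \pi\ri\mu_k}$ indexed by characters of $\bbZ_2\times\bbZ_2$, and comparing with \cref{eq:chYctv} entry-by-entry identifies the four rows with the four fixed points $q, p_1, p_2, p_3$ of $Y$.

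The main obstacle is bookkeeping rather than conceptual: the half-integer shifts in the Gamma arguments forced by the stacky $\bbZ_2\times\bbZ_2$ structure mean that reflection alone does not suffice, and one must carefully orchestrate the duplication formula and the branch choices (hidden in the factors $\re^{\pm\ri\pi\mu_k}$ of $\DD_1$ and in the sign choices dictated by the path $\rho$ of \cref{eq:rhoctv}) so that all Gamma factors telescope consistently across the four rows of $\U$. Once the two outer rows, corresponding to the index $\a=0$ (trivial sector) and to the most twisted sector $\a=3$, are checked by hand, the remaining two rows follow by the $\a_1\leftrightarrow \a_2$ symmetry of the geometry, concluding the verification of \cref{conj:iri} for this example.
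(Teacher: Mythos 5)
Your proposal follows the paper's route exactly: the corollary is deduced from \cref{prop:AB} by writing $\U=\cA\cB$, conjugating by the Gamma matrices, and telescoping the resulting Gamma factors via Euler reflection and the ($m=2$) multiplication formula into a trigonometric matrix that, after right-multiplication by $\overline{\rm CH}_\cX$ and summing roots of unity, reproduces $\overline{\rm CH}_Y$ — precisely the argument of \cref{thm:crcproj} transposed to this example. The paper leaves this verification implicit after \cref{prop:AB}, and your outline (including the observation that $\cI$ and $\overline{\rm CH}_\cX$ are both essentially the $\bbZ_2\times\bbZ_2$ character table, so that the combinatorial part collapses) is the intended computation.
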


\subsection{Quantization and the all-genus CRC}

For $j=1,\dots, 4$, define $1$-forms formally analytic in $z$,
$\RR_j = R_{ij}(u,z) \re^{u^j/z} \rd u^i$, satisfying the following
set of conditions:

\begin{itemize}
\item[\bf R1:] $R_{ij}(u,z) \in \cO_{\cM_{\a_1,\a_2}} \otimes \bbC[[z]]$,
\item[\bf R2:] $\nabla^{(z)} \RR_j = 0$ as a formal Taylor series in $z$,
\item[\bf R3:] $\sum_j R_{ij}(u,z) R_{kj}(u,-z) = \delta_{ik}$.\\
\end{itemize}

By condition {\bf R2} and their prescribed singular behavior at
$z=0$, $\RR_j$ are formal (asymptotic) flat sections of the Dubrovin
connection uniquely defined up to right multiplication by constants,
$R_{ij}(u,z) \to R_{ij}(u,z) \cN_j(z)$; picking a choice of $\RR$ is
said to endow $\cF_{\a_1,\a_2}$ with an {\it $R$-calibration}. Write
$B_{k}$ for the $k^{\rm th}$ Bernoulli number,
\eq{ \sum_{k\geq 0} B_k \frac{t^k}{k!}\triangleq
\frac{t}{\re^{t}-1}, }
and let $\Delta_i(u)$ be the normalized inverse-square-length of the
coordinate vector field $\de_{u^i}$ in the Frobenius metric, Eq.
\cref{eq:etaLG}. We will also denote by $\psi^\cW$ the Jacobian
matrix of the change-of-variables from the canonical frame, Eq.
\cref{eq:cancoord}, to the flat coordinate systems $\mathbf{t}$ and
$\mathbf{x}$ for $\cW=Y$ and $\cX$ respectively, with columns
normalized by $\sqrt{\Delta}$.
\begin{defn}
{\rm
The {\rm Gromov--Witten $R$-calibration} $(\RR_Y)_j =
(R_Y)_{ij}(u,z) \re^{u^j/z} \rd u^i$ of $Y$ is the unique
$R$-calibration on $QH_T(Y)\simeq \cF_{\a_1,\a_2}$ such that
\eq{ \lim_{\Re (t) \to -\infty}(R_Y)_{ij}(u,z) = \mathscr{D}^Y_i(z)
\delta_{ij}, \label{eq:normRY} }
where \eq{ \mathscr{D}^Y_i(z) =
\begin{cases}
\exp\l[\sum_{k>0}\frac{B_{2k}}{2k(2k-1)}
  \l(-\mu_1^{1-2k}-\mu_2^{1-2k}+(\mu_1+\mu_2)^{1-2k}\r) \r] & i=1,\\

\exp\l[\sum_{k>0}\frac{B_{2k}}{2k(2k-1)}
  \l(\mu_1^{1-2k}+\mu_2^{1-2k}-(\mu_1+\mu_2)^{1-2k}\r) \r] & \mathrm{else.}
\end{cases}
}
The \, {\rm Gromov--Witten \, $R$-calibration} \, $(\RR_\cX)_j\,=\,(R_\cX)_{ij}\,(u,z)\,\re^{u^j/z}\,\rd u^i$ \,of \,$\cX$ \,is \,the \,unique
\,$R$-calibration \,on $QH_T(\cX)\simeq \cF_{\a_1,\a_2}$ satisfying
\eq{ \sum_{i}\psi^\cX_{\a i} R^\cX_{ij}(u,z)\Big|_{x=0} = \l(e^{\rm
eq}(V^{(0)})\r)^{-1/2}\mathscr{D}^\cX_\a(z) \chi_{\a j},
\label{eq:normRX} }
where $\chi_{\a j}$ is the character table of $\bbZ_2 \times
\bbZ_2$, $V^{(0)}$ is the trivial part of the representation $V$
(thought of as a vector bundle on the classifying stack), and \eq{
\label{eq:DX} \mathscr{D}^\cX_\a  =
\begin{cases}
z\exp\left[\sum_{k>0}\frac{B_{2k} z^{2k-1}}{2k(2k-1)}\l(
\mu_1^{1-2k}+\mu_2^{1-2k}-(\mu_1+\mu_2)^{1-2k}\r)\r]
& \a=0,\\
\frac{\ri}{\sqrt{\mu_2 (\mu_1+\mu_2)}}
                \exp \left[\sum_{k>
                  0}\frac{B_{2k}z^{2k-1}}{2k(2k-1)}\left(\frac{1}{
                  \mu_1^{2k-1}}+\frac{2^{1-2k}-1}{ \mu_2^{2k-1}}+\frac{1-2^{1-2k}}{
                  (\mu_1+\mu_2)^{2k-1}}\right)\right] & \a=1, \\
       -\frac{1}{\sqrt{-\mu_1 (\mu_1+\mu_2)}}

\exp \left[\sum_{k>
                  0}\frac{B_{2k}z^{2k-1}}{2k(2k-1)}\left(\frac{1}{
                  \mu_2^{2k-1}}+\frac{2^{1-2k}-1}{ \mu_1^{2k-1}}+\frac{1-2^{1-2k}}{
                  (\mu_1+\mu_2)^{2k-1}}\right)\right] & \a=2, \\
\frac{\ri}{\sqrt{-\mu_1\mu_2}}

\exp \left[\sum_{k>
                  0}\frac{B_{2k}z^{2k-1}}{2k(2k-1)}\left(\frac{2^{1-2k}-1}{
                  \mu_1^{2k-1}}+\frac{2^{1-2k}-1}{ \mu_2^{2k-1}}-\frac{1}{
                  (\mu_1+\mu_2)^{2k-1}}\right)\right] &  \a=3.
\end{cases}
} \label{def:R}}
\end{defn}

For either $\cX$ or $Y$, Eqs. \cref{eq:normRY} and \cref{eq:normRX}
together with conditions {\bf R1-R3} above determine the
Gromov--Witten $R$-calibration uniquely. Existence of an
$R$-calibration $\RR_Y$ compatible with Eq. \cref{eq:normRY} follows
from the general theory of semi-simple quantum cohomology of
manifolds; the existence of an asymptotic solution $\RR_\cX$ of the
deformed flatness equations satisfying the (a priori
over-constrained) normalization condition Eq. \cref{eq:normRX} will
be shown in
the course of the proof of Theorem \ref{thm:scr}. \\

The relevance of Definition \ref{def:R} is encoded in the following
statement, which condenses \cite[Thm.~9.1]{MR1901075} and
\cite[Lem.~6.3,~6.5]{Brini:2013zsa}.
\begin{prop}
Givental's quantization formula holds for $\cW=\cX$ or $Y$ in any
path-connected domain containing the large radius point of $\cW$,
\eq{ \label{givqf} Z^\cW(\mathrm{t}_u) = \widehat{S_\cW^{-1}}
\widehat{\psi_\cW} \widehat{R_\cW} \re^{\widehat{u/z}} \prod_{i=1}^4
Z_{i, \rm pt}. }
where $\mathrm{t}_u$ denotes the shifted descendent times
$\mathrm{t}^{p}_u=t^{p}+\tau_\cW(u) \delta_{p0}$. Moreover, the
Coates--Iritani--Tseng/Ruan quantized CRC,
\eq{ Z^Y(\mathrm{t}_u) = \widehat{\U} Z^\cX(\mathrm{t}_u), }
holds if and only if the Gromov--Witten $R$-calibrations agree on
the semi-simple locus,
\eq{ R^\cX(u,z) = R^Y(u,z). }
\end{prop}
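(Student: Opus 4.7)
The plan is to derive this statement as a packaging of two established results: Givental's quantization theorem \cite[Thm.~9.1]{MR1901075} and the distillation of the quantized CRC into an $R$-calibration matching problem carried out in \cite[Lem.~6.3,~6.5]{Brini:2013zsa}. Both ingredients require semi-simplicity of $QH_T(\cW)$ in a path-connected neighborhood of the large radius point, and this hypothesis has already been secured by the existence of four distinct canonical coordinates \eqref{eq:cancoord} on the Frobenius manifold $\cF_{\a_1,\a_2}$ established in the previous subsection. The task is thus reduced to verifying that the two normalizations singled out in \cref{def:R} genuinely correspond, on each side, to the Gromov--Witten $R$-calibration produced by Givental's construction.

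For the first assertion \eqref{givqf}, I would apply Givental's formula to each of $\cW=\cX,Y$ on its own semi-simple domain around the appropriate distinguished boundary point. The building blocks $S_\cW$, $\psi_\cW$, $R_\cW$ are all determined by the genus-zero theory of $\cW$; the normalizations \eqref{eq:normRY} and \eqref{eq:normRX} arise as the one-loop factors at the chamber point. For $Y$, this is the standard Coates--Givental--Tseng quantum Riemann--Roch modification of the localization $R$-matrix at the $T$-fixed loci, whose asymptotic expansion produces precisely the Bernoulli exponentials $\mathscr{D}^Y_i$. For $\cX$, the analogous Tseng twist \cite{MR2578300} of the trivial $R$-matrix at the classifying stack $B(\bbZ_2\times\bbZ_2)$ yields the character table $\chi_{\a j}$ weighted by the diagonal factors $\mathscr{D}^\cX_\a$ in \eqref{eq:DX}, with the normal-bundle characters of $[\bbC^3/(\bbZ_2\times \bbZ_2)]$ accounting for the signs and the shifted arguments of the Bernoulli series.

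For the equivalence between the quantized CRC and the semi-simple locus identity $R^\cX = R^Y$, the verification of \cref{conj:iri} from the preceding corollary provides the symplectic isomorphism $\U$ intertwining the Dubrovin connections of $\cX$ and $Y$ upon analytic continuation along $\rho$. Under $\widehat{\U}$ the $\widehat{S}$-factors transform covariantly, and the matrices $\psi_\cX$, $\psi_Y$ coincide on the semi-simple locus since both diagonalize the same Frobenius product on $\cF_{\a_1,\a_2}$; the point contributions $\prod_i Z_{i,\mathrm{pt}}$ are $\widehat{\U}$-invariant by construction. Equating $Z^Y$ with $\widehat{\U} Z^\cX$ therefore collapses, by faithfulness of the Weyl quantization on the relevant Heisenberg representation, to $\widehat{R}_Y = \widehat{R}_\cX$, equivalently $R^\cX = R^Y$. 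The main obstacle I anticipate is the a priori over-determined nature of the prescription \eqref{eq:normRX}: the four diagonal factors $\mathscr{D}^\cX_\a$ together with $\chi_{\a j}$ fix every entry of $\psi^\cX R^\cX$ at $x=0$, and compatibility with the flatness condition \textbf{R2} and the unitarity condition \textbf{R3} is not automatic. I would discharge this step not by a general existence argument, but by the direct Tseng quantum Riemann--Roch computation at $B(\bbZ_2\times \bbZ_2)$, checking the match with \eqref{eq:DX} term by term; this technical verification is what is deferred to the proof of \cref{thm:scr}.
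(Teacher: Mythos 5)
Your proposal is correct and follows essentially the same route as the paper, which offers no independent proof but simply presents the proposition as a condensation of Givental's quantization theorem \cite[Thm.~9.1]{MR1901075} and the reduction of the quantized CRC to the matching of Gromov--Witten $R$-calibrations in \cite[Lem.~6.3,~6.5]{Brini:2013zsa}, with the existence of the over-constrained calibration $\RR_\cX$ deferred to the proof of \cref{thm:scr} exactly as you propose. The only caveat is your claim that $\psi_\cX$ and $\psi_Y$ "coincide" on the semi-simple locus: they are normalized Jacobians to \emph{different} flat frames and differ by the linear change of variables $t(x)$, a discrepancy absorbed into the $S$- and $\U$-factors in the cited lemmas rather than being literally zero.
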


\subsubsection{Saddle-point asymptotics}
\label{sec:asymp}

\begin{figure}
\centerline{\resizebox{.5\textwidth}{!}{
\input{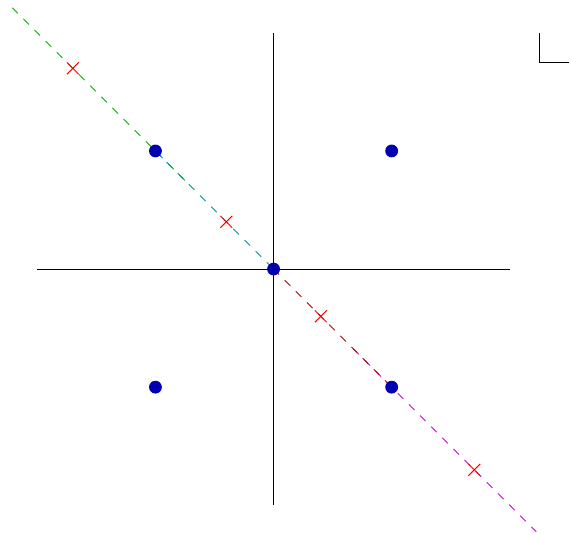_t}
}} \caption{Singular and critical points of the superpotential at the
orbifold
  point. $Z_1$, $Z_2$ are negative log-infinities of the superpotential. $Z_3$ and $Z_4$ are
  positive log-infinities. $q_i$, $i=1,2,3,4$ are the critical points.}
\label{fig:critOP}
\end{figure}

Formal power series solutions in $z$ of $\nabla^{(z)}\RR =0$ are
obtained from the saddle-point asymptotics of Eq. \cref{eq:Pi} at
$z=0$. The latter is an essential singularity of the horizontal
sections of the Dubrovin connection, and their asymptotic analysis
at $z=0$ relies on a choice of phase for the parameters $\a_1$,
$\a_2$, $z$ -- namely, a choice of Stokes sector. A technically
convenient choice is to restrict our study to the wedge
%
$\cS_+ = \{(\mu_1,\mu_2)| \Re(\mu_1)>0, \Re(\mu_2)<-\Re(\mu_1)\}$;
%
as individual correlators depend rationally on $\mu_1$, $\mu_2$, our
statements  will hold in full generality by analytic continuation in
the space of
equivariant parameters.

\begin{thm}
The all-genus, full-descendent Crepant Resolution Conjecture
(Conjecture \ref{conj:scr}) holds with $\cX=[\bbC^3/\bbZ_2 \times
\bbZ_2]$, $Y\to X$ its $G$-Hilb resolution and $\rho$ the analytic
continuation path of Eq. \cref{eq:rhoctv}. \label{thm:scr}
\end{thm}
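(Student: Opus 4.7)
The plan is to use the reduction to equality of $R$-calibrations recalled in the statement immediately preceding \cref{sec:asymp}: by Givental's quantization formula, \cref{conj:scr} for the pair $(\cX,Y)$ is equivalent to $R^\cX(u,z)=R^Y(u,z)$ on the semisimple locus of $\cF_{\a_1,\a_2}\simeq QH_T(\cX)\simeq QH_T(Y)$, after analytic continuation along $\rho$. Since conditions \textbf{R1--R3} together with any anchoring normalization determine an $R$-calibration uniquely up to a constant diagonal right-multiplication, and since both calibrations live on the \emph{same} Frobenius manifold and satisfy the \emph{same} deformed flatness equation, it suffices to exhibit one formal asymptotic expansion, as $z\to 0$, of a basis of flat sections of $\nabla^{(z)}$ that simultaneously satisfies the two prescribed normalizations \cref{eq:normRY} and \cref{eq:normRX}. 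The twisted periods $\Pi_i$ of \cref{eq:Pi} are the natural candidate.

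The second step is a Laplace/steepest-descent analysis of $\Pi_i$ as $z\to 0$ in the wedge $\cS_+$. In this sector each cycle $\gamma_i$ can be deformed to a Lefschetz thimble attached to a single Morse critical point $q_i^{\rm cr}$ of $\lambda$, with the logarithmic endpoints at $\{Z_1,\dots,Z_4,0,\infty\}$ providing the convergence data. Standard Watson-type expansion then produces an all-order formal power series in $z$ of the shape $R_{ij}(u,z)\re^{u^j/z}$, whose leading coefficient is proportional to $\psi(q_i^{\rm cr})/\sqrt{-\lambda''(q_i^{\rm cr})}$ and whose subleading terms are Bell-polynomial combinations of higher derivatives of $\lambda$ and $\psi$ at $q_i^{\rm cr}$; crucially, the prefactor chosen in \cref{eq:Pi} is precisely the one that cancels the twisted local monodromy and produces the asymptotic series in canonical frame. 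Conditions \textbf{R1--R3} are then automatic: \textbf{R1} from the analyticity of the saddle-point expansion, \textbf{R2} from the fact that the $\Pi_i$ are genuine flat sections of $\nabla^{(z)}$, and \textbf{R3} from a residue computation on the product of the asymptotic series.

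It then remains to match the two normalizations. For $Y$, the limit $\Re(t)\to-\infty$ forces three of the four critical points into the log-singularities at $Z_1,Z_2,0$ while the fourth stays regular; computing the Laplace expansion at each critical point, expanding the resulting logarithms in powers of $\mu_1,\mu_2,\mu_1+\mu_2$, and resumming via the generating function $t/(\re^t-1)=\sum_k B_k t^k/k!$ reproduces the prefactors $\mathscr{D}^Y_i(z)$ of \cref{eq:normRY}, in close analogy with \cite[Sec.~6.3]{Brini:2013zsa}. The main obstacle lies in the verification of \cref{eq:normRX}: this condition is a priori \emph{overdetermined}, in that it ties four asymptotic coefficients to four characters of $\bbZ_2\times\bbZ_2$ through the fixed matrix $\psi^\cX$. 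The strategy is to combine \cref{prop:AB} — which computes $\Pi_i$ at $x=0$ in closed form in terms of Euler $\beta$-functions — with an explicit evaluation of $\psi^\cX$ from the residue metric \cref{eq:etaLG} at the orbifold point; the matching then reduces to the Gauss multiplication formula for $\Gamma$, whose duplication specialization produces precisely the $(2^{1-2k}-1)$-factors in \cref{eq:DX}, reflecting the fact that two pairs of critical points collide onto the branch locus at $x=0$. Since one and the same choice of $\RR=\{\Pi_i\}$ satisfies both normalizations, $R^\cX=R^Y$ identically on the semisimple locus, and \cref{conj:scr} for this pair follows.
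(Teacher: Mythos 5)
Your proposal follows essentially the same route as the paper's proof: reduction to equality of the Gromov--Witten $R$-calibrations, realization of both via the saddle-point asymptotics of the twisted periods $\Pi_i$ along Lefschetz thimbles in $\cS_+$, verification of the orbifold normalization through Euler Beta integrals and Stirling's expansion (whence the $2^{1-2k}-1$ factors), and of the large-radius normalization through a degenerate Laplace analysis. The only noteworthy deviations are cosmetic: the paper obtains condition \textbf{R3} by checking unitarity at $x=0$ and invoking that parallel transport is an isometry, rather than by a residue computation on the product of series, and at large radius \emph{all four} critical points degenerate (three coalesce at $q=0$, one drifts to infinity), which the paper handles by rescaling the integration variable before taking the limit --- the limit and the steepest-descent integration do not commute, a subtlety worth flagging even though it does not affect the conclusion.
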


\begin{proof}
Asymptotic horizontal sections $\RR_i(u,z)$ are given by the
classical Laplace asymptotics of the integrals
\eq{ \cI_i = z \int_{\mathfrak{L}_i}\re^{\lambda/z} \phi(q) \rd q
\label{eq:Iint} }
where the Lefschetz thimble $\mathfrak{L}_i$ is given by the union
of the downward gradient lines of $\Re(\lambda)$ emerging from its
$i^{\rm th}$ critical point. Let us first consider the situation at
the orbifold point, which is schematized in Figure \ref{fig:critOP}.
We compute from Eq. \cref{eq:Ma1a2}
\eq{ q^{\rm cr}_i\Big|_{x=0}=   \frac{(-1)^{1/4+\sigma(i)}}{2}
\overline{q}^{(-1)^i}, \quad \overline{q} =   \sqrt{\frac{\sqrt{\mu
_1}+\sqrt{-\mu _2}}{\sqrt{\mu
      _1}-\sqrt{-\mu _2}}},
} \eq{ \l\{Z_1,Z_2,Z_3,Z_4\r\}\Big|_{x=0} =  \frac{\re^{\pi
\ri/4}}{2} \l\{\ri,-\ri,-1,1\r\}, }
with $\sigma(1)=\sigma(4)=0$, $\sigma(3)= \sigma(2)=1$. It is
straightforward to check that the constant phase paths of
$\re^{\lambda/z}$ emerging from $q^{\rm cr}_i$ are the straight
lines $\arg(q)=\pm \pi (\sigma(i)+1/4)$ that terminate at the
nearest algebraic zero of $\re^{\lambda/z}$ or at infinity, as in
Figure \ref{fig:critOP}. Moreover, for our choice of phases of the
weights in $\cS_+$, the contour integrals of $\re^{\lambda/z}\psi$
around the Pochhammer contours $\gamma_i$ retract
\cite[Rmk~5.5]{Brini:2013zsa} to line integrals on the straight line
segments connecting the zeroes of $\re^{\lambda/z}$ inside
$\gamma_i$. At the orbifold point, these are precisely the Lefschetz
thimbles $\mathfrak{L}_i$: then, the saddle-point expansion of the
differentials $\RR_i = \psi^\cX_{\a j} R_{ji}(u,z) \re^{u^i/z}\rd
x^\a \triangleq \rd \cI_i = \rd \Pi_i$ satisfies conditions {\bf
R1-R2 } above. We claim that up to right multiplication by $\cN_i
\in \bbC[[z]]$, $\RR_i$ this satisfies {\bf R3} and coincides with
the Gromov--Witten $R$-calibration of $\cX$. Indeed, as shown in
Appendix \hyperref[sec:B-1]{A.A}, in the trivialization given by $x^\a$ the
differential of the periods of $\re^{\lambda/z}$ at $x=0$ reduce to
Euler Beta integrals, whose steepest-descent asymptotics is
determined by Stirling's expansion for the $\Gamma$ function:
\eq{ \Gamma(x+y) x^{-x}\re^{x}x^{1/2-y} \stackrel{}{\simeq}
\sqrt{2\pi}\exp\l(\sum_{k>0}\frac{B_{k+1}(1-y)}{k(k+1)}x^{k}\r),
\quad  \Re (x)\gg 0. \label{eq:stirling} }
Then:
\ea{ \re^{-u^i/z}\de_{x^\a}\Pi_i\Big|_{x=0} = &
\re^{-u^i/z}|_{x=0}\cB^{-1}_{i\a} \nn \\ \simeq & \psi^\cX_{aj}
R_{ji}\Big|_{x=0} }
and by Eqs. \cref{eq:B-1}, \cref{eq:stirling}, and \cref{eq:DX} we
obtain
\eq{ \psi^\cX_{aj}R_{ji}\Big|_{x=0} = \sqrt{\frac{2\pi}{e^{\rm
eq}(V^{(0)})_\a}}\mathscr{D}^\cX_a \chi_{ai} }
so that $\RR = \sqrt{2\pi} \RR^\cX$. In particular, since by Eq.
\cref{eq:DX} $\RR$ satisfies the unitarity condition  at $x=0$, and
because parallel transport under the Dubrovin connection is an
isometry of the pairing in {\bf R3}, it satisfies condition {\bf R3}
for all $u$. At large radius, by condition {\bf R1} and the
asymptotic behavior of $J^Y(t,z)$ around $\Re(t)\to -\infty$ (Eq.
\cref{eq:JYas}), we must have that
\eq{ \RR \simeq \rd J^Y \cN^Y }
for some $\cN^Y =\lim_{\Re(t)\to-\infty}\re^{-u/z}\cI \in \bbC[[z]]
$. Its calculation via the steepest descent analysis of Eq.
\cref{eq:Iint} at large radius requires extra care since $\re^{t}=0$
is a singular point for $\nabla^{(z)}$: in this limit, the critical
points of the superpotential either coalesce at zero or drift off to
infinity,
\ea{ q^{\rm cr}_1 \sim & \frac{\a_1}{\a_2}\re^{t_2/2}, & q^{\rm
cr}_2 \sim & \l(1+
\frac{\a_1}{\a_2}\r)\re^{t_1+t_2/2}, \nn \\
q^{\rm cr}_3 \sim & \frac{\a_2}{\a_1+\a_2}\re^{-t_2/2}, & q^{\rm
cr}_4 \sim & -\re^{t_2/2}. }
The essential divergences in the saddle-point computation of $\cN^Y$
from Eq. \cref{eq:Iint} can be treated as follows: first rescale the
integration variables in Eq. \cref{eq:Iint} by $\re^{-t_2/2}$,
$\re^{-t_1-t_2/2}$, $\re^{t_2/2}$ and $\re^{-t_2/2}$ for $i=1,2,3,4$
respectively; then integrate over the steepest descent path,
isolating the essential divergence at the large radius point, and
finally take the resulting (finite) limit $\mathfrak{Re}(t)\to
-\infty$: notice that the last two steps do not commute in general,
as poles are generally created along the integration contour in the
large radius limit. The final result reduces, for all $i$, to the
computation of the saddle-point asymptotics of Beta integrals.
Explicitly, we get
\ea{ \sqrt{\Delta^{\rm cl}} \cN^Y=&
\lim_{\mathfrak{Re}(t)\to-\infty}\sqrt{\Delta_i(u)}\re^{-u_i/z}\cI_i
\nn \\ = &
\begin{cases}
\frac{2\pi \mu_1^{\mu_1-1/2}(-\mu_2)^{\mu_2+1/2}(-\mu_1-\mu_2)^{-\mu_1-\mu_2-1/2}}{B^{\rm as}(\mu_1,-\mu_1-\mu_2)} & i=1, \\
 \frac{B^{\rm as}(\mu_1,-\mu_2-\mu_1)}{\mu_1^{\mu_1-1/2}(-\mu_2)^{\mu_2+1/2}(-\mu_1-\mu_2)^{-\mu_1-\mu_2-1/2}} & \mathrm{else,}
\end{cases}
}
where $\Delta^{\rm cl}=\lim_{\Re(t)\to-\infty}\Delta(u)$ and $B^{\rm
as}(x,y)$ denotes the Stirling expansion of the Euler Beta function.
Then,
\eq{ \lim_{\Re(t)\to-\infty}R_{ij}(u,z)=\sqrt{2\pi}\mathscr{D}^Y_i
\delta_{ij}, }
and thus $R^\cX=R^Y$, concluding the proof.
\hfill $\Box$ \end{proof}

\begin{cor}
The quantized OCRC, Proposal 4, holds for $\cX$ and $Y$ as in
Theorem \ref{thm:scr}. \label{cor:scr}
\end{cor}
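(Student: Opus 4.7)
The plan is to argue that \cref{cor:scr} is an immediate instance of \cref{conj:manyholes}, whose hypotheses are now satisfied for this pair $(\cX,Y)$. The target $\cX=[\bbC^3/\bbZ_2\times\bbZ_2]$ is manifestly hard Lefschetz: every non-trivial twisted sector has age exactly one, and the involution on the inertia stack is trivial on each component. Moreover, \cref{thm:scr} has just established the higher-genus closed CRC for this diagram along the analytic continuation path $\rho$ of \cref{eq:rhoctv}. Hence the only task is to re-run the conditional argument of \cref{conj:manyholes} as in \cite[Sec.~6]{Brini:2013zsa} and observe that nothing in that argument depends on the particular resolution.

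The body of the argument I would organize in three steps. First, by the definition \eqref{mholepot}, $\mathbb{F}^{g,\ell}_{\cW,L}$ is literally the contraction of $J^\cW_{g,\ell}$ against $\ell$ copies of the disk tensor $\overline{\DD}^+_\cW$, so the claim is that applying $\bbO^{\otimes\ell}$ to one side is the same as transporting the genus-$g$, $\ell$-point descendent $J$-function from $\cX$ to $Y$ via the morphism dictated by the closed CRC. Second, dequantizing the identity $Z^Y=\widehat{\U}Z^\cX$ of \cref{thm:scr} in the semi-classical limit yields
\eq{
J^Y_{g,\ell}(\tau_Y,\bfz;\vec w) \;=\; \U^{\otimes\ell}\circ J^\cX_{g,\ell}(\tau_\cX,\bfz;\vec w)
}
upon the closed-string change of variables already used in \cref{thm:scr}; this step uses only the formal structure of Weyl quantization of linear symplectomorphisms of Givental space, is independent of the target, and was spelled out in \cite[Prop.~6.3]{Brini:2013zsa}. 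Third, one combines the two by invoking the purely algebraic identity
\eq{
\overline{\DD}^+_Y \circ \U \;=\; \bbO \circ \overline{\DD}^+_\cX
\label{eq:mykey}
}
applied hole-by-hole.

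The heart of the proof is the verification of \eqref{eq:mykey}. Unwinding the definition \eqref{eq:dr}, the disk tensor factors as a scalar prefactor times $\overline{\Gamma}_\cW^{-1}\circ\Theta_\cW$, with $\Theta_\cW$ as in \eqref{Theta}. Iritani's formula, now a theorem for this diagram by \cref{prop:AB}, gives $\U=\overline{\Gamma}_Y\circ\overline{\mathrm{CH}}_Y\circ\overline{\mathrm{CH}}^{-1}_\cX\circ\overline{\Gamma}^{-1}_\cX$, whereas \cref{pr:bbo} reads $\bbO=\Theta_Y\circ\overline{\mathrm{CH}}_Y\circ\overline{\mathrm{CH}}^{-1}_\cX\circ\Theta^{-1}_\cX$. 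The outer $\overline{\Gamma}$ factors in $\U$ telescope with the $\overline{\Gamma}^{-1}$ built into $\overline{\DD}^+_\cW$, leaving exactly the $\Theta$-sandwiched Chern-character block that defines $\bbO$, and this is \eqref{eq:mykey}. The main obstacle I foresee is purely bookkeeping: one must check that the scalar prefactors in $\overline{\DD}^+_\cW$ agree on the two sides (which forces $\mathfrak{h}_Y=\mathfrak{h}_\cX=1$, easily verified from the $I$-functions as was done in \cref{sec:localpp}), and that the specialization $z=n_{\rm eff}w_1/d$ used to extract winding-$d$ disk invariants commutes with analytic continuation along $\rho$. Both points are insensitive to the geometry of the closed topological vertex and were already handled in \cite[Sec.~6]{Brini:2013zsa}, so the corollary reduces to a direct citation once \cref{thm:scr} is in hand.
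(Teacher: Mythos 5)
Your proposal is correct and follows essentially the same route as the paper, which treats \cref{cor:scr} as an immediate consequence of \cref{thm:scr} supplying the hypothesis of the conditional statement in Proposal~\ref{conj:manyholes}, whose derivation from the closed quantized CRC (including the telescoping of the $\overline{\Gamma}$ factors between $\U$ and the disk tensors that yields $\overline{\DD}^+_Y\circ\U=\bbO\circ\overline{\DD}^+_\cX$) was already carried out in \cite[Sec.~6]{Brini:2013zsa}. Your write-up simply makes explicit the bookkeeping the paper leaves to that citation.
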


\appendix{}{Boundary behavior of periods}

For $|x_i|<1$, $i=1,2,3$, and $\Re(c)>\Re(a)>0$ let
$F_D^{(3)}(a,b_1,b_2,b_3,c,x_1,x_2,x_3)$ denote the Lauricella
hypergeometric function of type~$D$ \cite{MR0422713},
\ea{ \label{eq:FD} F_D^{(3)}(a,b_1,b_2,b_3,c,x_1,x_2,x_3)\triangleq
& \sum_{d_1,d_2,d_3\geq
  0}\frac{(a)_{d_1+d_2+d_3}}{(c)_{d_1+d_2+d_3}}\prod_{i=1}^3\frac{(b_i)_{d_i}
  x_i^{d_i}}{d_i!},  \\
= & \frac{\Gamma(c)}{\Gamma(a)\Gamma(c-a)}\int_0^1
t^{a-1}(1-t)^{c-a-1}\prod_{i=1}^3(1-x_i t)^{-b_i}\rd t.
\label{eq:FDint} }
The last line analytically continues outside the unit polydisc the
power-series definition of $F_D^{(3)}$. Furthermore, the
continuation to arbitrary parameters $a$ and $c$ is obtainted
through the use of the Pochhammer contour:
\eq{ \int_0^1 \to \frac{1}{(1-\re^{2\pi\ri a})(1-\re^{2\pi\ri
c})}\int_{[C_0,C_1]}. \label{eq:poch} }
Eqs. \cref{eq:FDint} and \cref{eq:poch} can then be used to express
Eq. \cref{eq:Pi} in the form of a sum of generalized hypergeometric
functions. Explicitly, we have
\ea{ \label{eq:Pi4FD} \Pi_4
= & -\re^{\frac{t_0}{z}+\frac{(\a_1-\a_2)t_2}{2}}
\frac{Z_2^{\a_1}Z_3^{\a_2}Z_4^{\a_2}}{Z_1^{\a_2}}  \nn \\
&  \Bigg\{ \frac{\Gamma(-\a_1-\a_2)\Gamma(1+\a_1)}{\Gamma(1-\a_2)}
F_D^{(3)}\l(-\a_1-\a_2,-\a_1,-\a_2,-\a_2,1-\a_2,\frac{Z_1}{Z_2},\frac{Z_1}{Z_3},\frac{Z_1}{Z_4}\r)
\nn \\
& \frac{\Gamma(1-\a_1-\a_2)\Gamma(\a_1)}{\Gamma(1-\a_2)}
F_D^{(3)}\l(1-\a_1-\a_2,-\a_1,-\a_2,-\a_2,1-\a_2,\frac{Z_1}{Z_2},\frac{Z_1}{Z_3},\frac{Z_1}{Z_4}\r)
\nn \\
& \frac{\Gamma(1-\a_1-\a_2)\Gamma(1+\a_1)}{\Gamma(2-\a_2)}
\frac{Z_1}{Z_2}F_D^{(3)}\l(1-\a_1-\a_2,1-\a_1,-\a_2,-\a_2,2-\a_2,\frac{Z_1}{Z_2},\frac{Z_1}{Z_3},\frac{Z_1}{Z_4}\r)\Bigg\},
\\ \nn \\
\label{eq:Pi1FD}
\Pi_1 = & \Pi_4 \quad (Z_1 \leftrightarrow Z_2),\\ \nn \\
\label{eq:Pi2FD} \Pi_2
= & \re^{\frac{t_0}{z}+\frac{(\a_1-\a_2)t_2}{2}}Z_1^{\a_1+\a_2} \nn \\
\Bigg\{ &  \frac{\Gamma(-\a_1-\a_2)\Gamma(\a_1)}{\Gamma(-\a_2)}
F_D^{(3)}\l(-\a_1-\a_2; -\a_1,-\a_2,-\a_2;-\a_2,\frac{Z_2}{Z_1},
\frac{Z_3}{Z_1}, \frac{Z_4}{Z_1}\r) \nn \\
 + & \frac{\Gamma(-\a_1-\a_2)\Gamma(\a_1+1)}{\Gamma(1-\a_2)} F_D^{(3)}\l(-\a_1-\a_2; 1-\a_1,-\a_2,-\a_2;1-\a_2,\frac{Z_2}{Z_1},
\frac{Z_3}{Z_1}, \frac{Z_4}{Z_1}\r) \nn \\
 - & \frac{\Gamma(-\a_1-\a_2)\Gamma(\a_1+1)}{\Gamma(1-\a_2)} F_D^{(3)}\l(-\a_1-\a_2; -\a_1,-\a_2,-\a_2;1-\a_2,\frac{Z_2}{Z_1},
\frac{Z_3}{Z_1}, \frac{Z_4}{Z_1}\r)\Bigg\}
\\ \nn \\
\Pi_3 = & \Pi_2 \quad (Z_1 \leftrightarrow Z_2), \label{eq:Pi4F3} }
where $Z_i(t)$, $i=1,2,3,4$ were defined in Eq. \cref{eq:Zi}.

\subsection{Orbifold point}
\label{sec:B-1}

By Eq. \cref{eq:B-1}, the matrix $\cB$ in Eq. \cref{eq:B-1} is
computed by evaluating the derivatives of $\Pi_i$ at the orbifold
point $x=0$. Consider for simplicity the case $\a=0$. We have
\ea{ Z_1 Z_2^{-1}|_{x=0}= & -1, &  Z_1 Z_3^{-1}|_{x=0}= & \ri, & Z_1
Z_4^{-1}|_{x=0}= & -\ri, \nn \\
Z_2 Z_3^{-1}|_{x=0}= & -\ri, &  Z_2 Z_4^{-1}|_{x=0}= & \ri, & Z_3
Z_4^{-1}|_{x=0}= & -1. }
The value of the Lauricella function, Eq. \cref{eq:FD}, for
arguments equal to distinct roots of unity different from one can be
computed explicitly using the integral representation of Eq.
\cref{eq:FDint}: the symmetry of the Gauss function $~_2
F_1(a,b,c,x)$ under transposition of $a$ and $b$ and simple
manipulations with the products over roots of unity allow to
simplify the integrands down to $t^\b(1-t)^\gamma$ for parameters
$\beta$ and $\gamma$ depending linearly on $\mu_1, \mu_2$. The
integrals are in turn evaluated with the aid of the Euler Beta
integral, Eq. \cref{eq:EulerBeta}. For example, for $i=4$, we have
\ea{ \de_{x_0}\Pi_4|_{x=0} = &  \frac{\re^{\frac{1}{2} \ri \pi
\mu_2}}{\mu_2
  2^{\mu_1+\mu_2+1}} \int_0^1
\frac{(1-q)^{\mu_1-1}(1+q)^{\mu_2+1}}{q^{\frac{\mu_1+\mu_2}{2}}}\frac{\rd
  q}{q} \nn \\
= &
 \frac{\Gamma(\mu_1)\Gamma(-\frac{\mu_1+\mu_2}{2})}{\Gamma(\frac{\mu_1-\mu_2}{2})} \frac{\re^{\frac{1}{2} \ri \pi    \mu_2}}{\mu_2
  2^{\mu_1+\mu_2+1}} ~_2F_1\l( -\frac{\mu_1+\mu_2}{2},-1-\mu_2,
\frac{\mu_1-\mu_2}{2},-1\r) \nn \\
= &
 \frac{\Gamma(\mu_1)\Gamma(-\frac{\mu_1+\mu_2}{2})}{\Gamma(\frac{\mu_1-\mu_2}{2})} \frac{\re^{\frac{1}{2} \ri \pi    \mu_2}}{
  2^{\mu_1+\mu_2+2}}
 \frac{\Gamma(\frac{\mu_1-\mu_2}{2})}{\mu_2\Gamma(-1-\mu_2)\Gamma(1+\frac{\mu_1+\mu_2}{2})}\int_0^1
\frac{(1-q)^{\frac{\mu_1+\mu_2}{2}}}{q^{1/2+\mu_2/2}}\frac{\rd
q}{q} \nn \\
= & \frac{\re^{\frac{1}{2}\pi \ri
\mu_2}}{4}\frac{\Gamma\l(\frac{\mu_1}{2}\r)\Gamma\l(-\frac{\mu_1+\mu_2}{2}\r)}{\Gamma(1-\mu_2)}
}
The value of the derivatives with respect to $x_\a$ for $\a>0$ are
computed in the same way; the final result is Eqs. \cref{eq:Bfin}--\cref{eq:I}.

\subsection{Large radius}
\label{sec:A}

By the discussion at the end of the proof of Proposition
\ref{prop:AB}, twisted periods behave around large radius as
\eq{
 \Pi_i(t,\a) \sim  z\l(\cA^{-1}_{i,1}+\cA^{-1}_{i2} \re^{-t_1 \mu_1}+\cA^{-1}_{i3}\re^{-t_2 \mu_2}+\cA^{-1}_{i,4}\re^{t_3 (\mu_1+\mu_2)}\r).
}
When $\Re(t)\to -\infty$,
the arguments of the Lauricella functions appearing in the
expression of $\Pi_i$ behave like
%
\ea{ \label{eq:ZLR1}
(Z_2 Z_1^{-1},Z_2 Z_3^{-1},Z_2 Z_4^{-1}) & \sim (-\infty,\infty,\infty), \\
\label{eq:ZLR2}
(Z_2 Z_1^{-1},Z_3 Z_1^{-1},Z_4 Z_1^{-1}) & \sim (-\infty,0,1), \\
\label{eq:ZLR3}
(Z_1 Z_2^{-1},Z_3 Z_2^{-1},Z_4 Z_2^{-1}) & \sim (0,0,0), \\
\label{eq:ZLR4} (Z_1 Z_2^{-1},Z_1 Z_3^{-1},Z_1 Z_4^{-1}) & \sim
(0,-\infty,1). }
The simplest asymptotics is for $i=3$, as it is dictated by the
convergent power series expansion of Eq. \cref{eq:FD}:
\ea{ \Pi_3 \sim &  \re^{\frac{t_0}{z}+\frac{(\mu_1-\mu_2)t_2}{2}}
Z_1^{\mu_1+\mu_2}
\frac{\Gamma(-\mu_1-\mu_2)\Gamma(\mu_1-1)}{\Gamma(-\mu_2)} \nn \\
\sim & -\re^{\frac{t_0}{z}-\mu_2
t_2}\frac{\Gamma(-\mu_1-\mu_2)\Gamma(1+\mu_2)}{\Gamma(1-\mu_1)}. }
This sets $\cA_{3,j}=\delta_{j,3}
\frac{\Gamma(-\mu_1-\mu_2)\Gamma(1+\mu_2)}{\Gamma(1-\mu_1)}$. \\

The other cases are more delicate. One strategy to treat them, as in
\cite{Brini:2013zsa}, is to resum Eq. \cref{eq:FD} in one of the
variables and then apply the Kummer formulas to the summand, which
in all cases has the form of a Gauss function in the resummed
variable. For $\Pi_2$ and $\Pi_4$, we use that, when
$(x_1,x_2,x_3)\sim (0,\infty,1)$,
$F_D^{(3)}(a,b_1,b_2,b_3,c,x_1,x_2,x_3)  \sim
F_1(a,b_2,b_3,c,x_2,x_3)$, where
\eq{ F_1(a,b_2,b_3,c,x_2,x_3)=\sum_{m,n\geq
  0}\frac{(a)_{m+n}(b_2)_m(b_3)_n}{(c)_{m+n}m!n!}x_2^m x_3^n
\label{eq:F1} }
is the Appell $F_1$ function. Performing the summation on $n$ for
fixed $m$ in Eq. \cref{eq:F1} gives
\ea{ & F_1(a,b_2,b_3,c,x_2,x_3) = \frac{\Gamma(c)}{\Gamma(a)}
\sum_{m\geq
  0}\frac{x_2^m (b_2)_m\Gamma(a+m)}{m!\Gamma(c+m)} ~_2F_1(a+m,b_3,c+m,x_3)
\nn \\ & \frac{\Gamma (c) \left(1-x_3\right)^{-a-b_3+c} \Gamma
\left(a-c+b_3\right) }{\Gamma (a) \Gamma \left(b_3\right)} \sum
_{k=0}^{\infty } \frac{x_2^k \left(b_2\right)_k \,
   _2F_1\left(c-a,c+k-b_3;-a+c-b_3+1;1-x_3\right)}{k!} \nn \\ & +\frac{\Gamma (c) \Gamma \left(-a+c-b_3\right)}{\Gamma (c-a)} \sum
   _{k=0}^{\infty } \, _2F_1\left(a+k,b_3;a-c+b_3+1;1-x_3\right) \frac{x_2^k
  (a)_k \left(b_2\right)_k}{k! \Gamma \left(c+k-b_3\right)}
}
The leading asymptotics at $x_3 \sim 1$ is therefore given by
\ea{
& F_1(a,b_2,b_3,c,x_2,x_3) \nn \\
& \sim \frac{\Gamma (c)  \Gamma \left(a-c+b_3\right) }{\Gamma (a)
\Gamma
\left(b_3\right)}\left(1-x_3\right)^{c-a-b_3}\left(1-x_2\right)^{-b_2}
+\frac{\Gamma (c) \Gamma
  \left(-a+c-b_3\right)}{\Gamma (c-a)\Gamma(c-b_3)}
\, _2F_1\left(a,b_2;c-b_3;x_2\right), }
and further application of the Kummer formula at infinity on $x_2$
yields \ea{ F_1(a,b_2,b_3,c,x_2,x_3)  \sim &
\frac{\Gamma (c)  \Gamma \left(a-c+b_3\right) }{\Gamma (a) \Gamma
\left(b_3\right)}\left(1-x_3\right)^{c-a-b_3}\left(-x_2\right)^{-b_2}
+\frac{\Gamma (c)}{\Gamma (c-a)}
  \frac{\Gamma(b_2-a)}{\Gamma(b_2)}(-x_2)^{-a} \nn \\ & + \frac{\Gamma (c) \Gamma
  \left(c-a-b_3\right)\Gamma(a-b_2)}{\Gamma (c-a)\Gamma(a)\Gamma(c-b_3-b_2)}(-x_2)^{-b_2}.
}
Hence:
\ea{ \re^{-\frac{t_0}{z}}\Pi_4 \sim & \frac{\Gamma (\mu_1) \Gamma
(-\mu_1-\mu_2) \re^{-\mu_1 t_1}}{\Gamma
  (1-\mu_2)} -\frac{\Gamma (-\mu_1) \Gamma (\mu_1+\mu_2)}{\Gamma
  (1+\mu_2)}+\frac{\Gamma (\mu_1) \Gamma (-\mu_1-\mu_2) \re^{(\mu_1+\mu_2) t_3}}{\Gamma(1-\mu_2)}, \\
\re^{-\frac{t_0}{z}}\Pi_2 \sim & -\frac{\re^{\ri \pi  \left(\mu
_1+\mu _2\right)} \Gamma \left(-\mu _2\right) \Gamma \left(\mu
_1+\mu _2\right)}{\Gamma \left(1+\mu
   _1\right)}+\frac{\re^{\ri \pi  \left(\mu _1+\mu _2\right)} \Gamma
  \left(-\mu _1-\mu _2\right) \Gamma \left(\mu _2\right) \re^{-\mu
    _2 t_2}}{\Gamma
   \left(1-\mu _1\right)}\nn \\ - & \frac{
\Gamma \left(\mu _1\right) \Gamma \left(-\mu _1-\mu _2\right)
   \re^{(\mu _1+\mu _2)t_3}}{\Gamma \left(1-\mu _2\right)}.
}
Finally, for $\Pi_1$ we use that
\ea{ F_D^{(3)}(a; b_1,b_2,b_3 ;c;x_1,x_2,x_3) = &
(-x_2)^{-b_2}F_1\l(a-b_2,
  b_1,b_3,c-b_2, x_1,x_3\r)\l(1+\cO\l(\frac{1}{x_2}\r)\r)
  \nn \\ + & (-x_2)^{-a}
\frac{\Gamma(c)\Gamma(b_2-a)}{\Gamma(b_2)\Gamma(c-a)}\l(1+\cO\l(\frac{1}{x_2}\r)\r)
}
where we have resummed w.r.t. $x_2$, applied Lemma \ref{lem:kum} for
$q=1$, and isolated the leading contribution in $x_2$ for $x_1/x_2
\sim 0$, $x_3/x_2 \sim 0$, as is the case when $\mathfrak{Re}(t)\sim
-\infty$ by Eqs. \cref{eq:ZLR1}--\cref{eq:ZLR4}. Setting now $x_1=x_3$ and further application of
Lemma \ref{lem:kum} gives
\ea{ & F_D^{(3)}(a; b_1,b_2,b_3 ;c;x_1,x_2,x_3) \sim (-x_2)^{-a}
\frac{\Gamma(c)\Gamma(b_2-a)}{\Gamma(b_2)\Gamma(c-a)}
 \nn \\
+ &  (-x_2)^{-b_2}
\frac{\Gamma(c)\Gamma(a-b_2)}{\Gamma(a)\Gamma(c-b_2)}
{}_2F_1\l(a-b_2, b_1+b_3,c-b_2, x_1\r) \nn \\
& \sim (-x_2)^{-a}
\frac{\Gamma(c)\Gamma(b_2-a)}{\Gamma(b_2)\Gamma(c-a)}
 \nn \\
+ &  (-x_2)^{-b_2}(-x_1)^{b_2-a}
\frac{\Gamma(c)\Gamma(a-b_2)\Gamma(b_1+b_3+b_2-a)}{\Gamma(b_1+b_3)\Gamma(c-a)\Gamma(a)} \nn \\
+ &  (-x_2)^{-b_2}(-x_1)^{-b_1-b_3}
\frac{\Gamma(c)\Gamma(a-b_2-b_1-b_3)}{\Gamma(a)\Gamma(c-b_1-b_2-b_3)},
}
so that
\ea{ \re^{-\frac{t_0}{z}}\Pi_1 \sim & \frac{(-1)^{\mu _1} \Gamma
\left(\mu
   _1\right) \Gamma \left(-\mu _1-\mu _2\right) \re^{-\mu _1t_1}}{\Gamma \left(1-\mu _2\right)}-\frac{\Gamma \left(-\mu _1\right) \Gamma \left(-\mu _2\right)}{\Gamma \left(1-\mu _1-\mu _2\right)}-\frac{\Gamma \left(\mu _1\right) \Gamma \left(\mu
   _2\right) \re^{-\mu _2 t_2}}{\Gamma \left(1+\mu _1+\mu _2\right)},
}
which concludes the proof.

\providecommand{\bysame}{\leavevmode\hbox to3em{\hrulefill}\thinspace}
%
%

\bibliographystyle{amsalpha}
\bibliographymark{References}
\def\cprime{$'$}

\end{document}